\patchcmd\Gread@eps{\@inputcheck#1 }{\@inputcheck"#1"\relax}{}{}
\DeclareRobustCommand\widecheck[1]{{\mathpalette\@widecheck{#1}}}
\def\@widecheck#1#2{%
    \setbox\z@\hbox{\m@th$#1#2$}%
    \setbox\tw@\hbox{\m@th$#1%
       \widehat{%
          \vrule\@width\z@\@height\ht\z@
          \vrule\@height\z@\@width\wd\z@}$}%
    \dp\tw@-\ht\z@
    \@tempdima\ht\z@ \advance\@tempdima2\ht\tw@ \divide\@tempdima\thr@@
    \setbox\tw@\hbox{%
       \raise\@tempdima\hbox{\scalebox{1}[-1]{\lower\@tempdima\box
\tw@}}}%
    {\ooalign{\box\tw@ \cr \box\z@}}}
\DeclarePairedDelimiter\floor{\lfloor}{\rfloor}
\newcommand{\eqdef}{=\mathrel{\mathop:}}
\newcommand\Set[2]{\{#1~\mid~#2\}}
\definecolor{fondpaille}{cmyk}{0,0,0.1,0}
\newcommand{\holdercon}{W_1}
\newcommand{\tvcon}{W_2}
\newcommand{\denslower}{p_{\text{min}}}
\newcommand{\tvlb}{c_{\text{TV}}}
\newcommand*{\TV}{\operatorname{TV}}
\newcommand*{\var}{\operatorname{var}}
\begin{document}

\begin{center}
{\LARGE Minimax Optimal Conditional Density Estimation under \\ 
\vspace{.3cm}
Total Variation Smoothness}

{\large
\begin{center}
\begin{tabular}{ccc}
Michael Li & Matey Neykov & Sivaraman Balakrishnan
\end{tabular}
\end{center}}

{Department of Statistics \& Data Science\\ Carnegie Mellon University\\Pittsburgh, PA 15213\\[2ex]\texttt{\{mli4, mneykov, sbalakri\}@andrew.cmu.edu}}

\begin{abstract} 
This paper studies the minimax rate of nonparametric conditional density estimation under a weighted absolute value loss function in a multivariate setting. We first demonstrate that conditional density estimation is impossible if one only requires that $p_{X|Z}$ is smooth in $x$ for all values of $z$.
This motivates us to consider a sub-class of absolutely continuous distributions, restricting the conditional density $p_{X|Z}(x|z)$ to not only be H\"older smooth in $x$, but also be total variation smooth in $z$. We propose a corresponding kernel-based estimator and prove that it achieves the minimax rate. We give some simple examples of densities satisfying our assumptions which imply that our results are not vacuous. Finally, we propose an estimator which achieves the minimax optimal rate adaptively, i.e., without the need to know the smoothness parameter values in advance. Crucially, both of our estimators (the adaptive and non-adaptive ones) impose no assumptions on the marginal density $p_Z$, and are not obtained as a ratio between two kernel smoothing estimators which may sound like a go to approach in this problem.
\end{abstract}
\end{center}

\section{Introduction}\label{intro:section}



A significant yet challenging problem in statistical inference is how to learn from complex, multidimensional data. While the nonparametric regression problem of estimating conditional mean $\mathbb{E} (x|z)$ from an \textit{i.i.d.} sample of $ (X, Z)$ is well studied, the alternative problem of estimating the full conditional density $p_{X|Z}(x|z)$ remains largely unexplored. There is little literature studying minimax optimal conditional density estimation, and particularly not when both $X$ and $Z$ are in a multivariate setting. However, the advantages of estimating $p_{X|Z}(x|z)$ over just the conditional mean are numerous. Fundamentally, the conditional mean is a summary of the conditional density. It follows that conditional density yields more information about the data and can be more useful for subsequent analysis. This is especially important when there exists multi-modality, asymmetry, or heteroscedastic noise in $p_{X|Z}(x|z)$, in which case the conditional mean $\mathbb{E} (x|z)$ would be insufficient to explain the data and to do inference. Furthermore, the problem of nonparametric quantile regression \cite{takeuchi2006nonparametric} can be solved via conditional density estimation. Finally, when forecasting and making predictions in fields such as economics, conditional density has been proven to be a key component of interest \cite{filipovic2012conditional}. However, although the advantages of conditional density estimation are clear, it is a harder problem than conditional mean estimation, which in turn raises the need to impose stronger assumptions.

To the best of our knowledge in this paper we give the first matching minimax upper and lower bounds for conditional density estimation in a multivariate setting. Concretely, the problem we consider is the following. Suppose here and throughout the paper that we have random variables $X \in [0, 1]^{d_X}$, $Z \in [0, 1]^{d_Z}$, and $n$ independent and identically distributed (i.i.d.) observations $\mathcal{D}_n = \{(X_1,Z_1), \ldots, (X_n,Z_n)\}$, coming from a joint distribution $p_{X,Z}$ that is absolutely continuous with respect to the Lebesgue measure on $[0,1]^{d}$, where $d = d_X + d_Z$. Our goal is to estimate the conditional density $p_{X|Z}(x|z)$ with the estimate $\hat p_{X|Z}(x|z)$ where $x = (x_1, \ldots, x_{d_X}), z = (z_1, \ldots, z_{d_Z})$. We will focus on the following loss function
\begin{align}\label{loss:function:def}
\mathbb{E} \int \int |\hat p_{X|Z}(x |z) - p_{X|Z}(x |z)| p_{Z}(z) d x d z,
\end{align}
where $p_{Z}$ denotes the marginal density of $Z$, the expectation is taken with respect to $n$ i.i.d. samples from $p_{X,Z}$, and $d x$ and $dz$ are shorthands for $\prod_{i \in [d_X]} d x_i$, $\prod_{i \in [d_Z]} d z_i$ respectively. The loss function \eqref{loss:function:def} is largely inspired by the works \cite{bashtannyk2001bandwidth, izbicki2016nonparametric}, where the authors consider the squared version of this loss.
The $L_1$-based loss function that we use has several benefits, and has been argued for in past works (see, for instance,~\cite{devroye2012combinatorial,devroye1985nonparametric} in the context of density estimation, and~\cite{balakrishnan2019hypothesis,balakrishnan2018hypothesis} in the context of density testing).
The $L_1$-based distance metric induced by this loss function is invariant to monotonic transformations, and in contrast to the $L_2$-based distance,
closeness in this distance has a clear probabilistic interpretation. Equivalently, the loss function we define may be interpreted as the $L_1$ distance between the joint distribution $\hat p_{X|Z}(x |z)p_{Z}(z)$ and the joint distribution $ p_{X, Z}(x, z)$. Furthermore, we can decompose the loss function into two parts. First we have $ \int |\hat p_{X|Z}(x |z) - p_{X|Z}(x |z)| d x$ which is equal to the $L_1$ distance between the estimated density and the target density. Next we weigh this distance by $p_{Z}$ to stress on the regions where $Z$ is more common, and downweight regions where $Z$ is less common. An important point that is worth making is that in this work we do not impose any assumptions on $p_Z$, which is enabled by the fact that the true density $p_Z$ is present in the loss function. Hence our estimators can handle situations where $p_Z$ may be non-differentiable and not even continuous. This is in stark contrast with an approach that one may be willing to take, i.e., to assume that $p_Z$ is H\"{o}lder smooth and estimate the conditional distribution as a ratio between kernel smoothed estimators of the joint $p_{X,Z}$ and the marginal $p_Z$.

We note that minimax rates with respect to this loss function have not been previously studied in the literature, and in fact any analysis of the minimax rates of conditional density estimation is scarce. The closest minimax analysis is given by \citet{efromovich2007conditional}, where the author studied minimax rates of conditional density estimation under an unweighted squared loss function. Unlike in the present work, \cite{efromovich2007conditional} only focused on the one dimensional setting, i.e., when $d_X = d_Z = 1$. Additionally, there exist more significant differences in the assumptions made and the problem settings, which will be elaborated on later.

\subsection{Relevant Literature}

In this section we review some of the relevant literature. In a classical work, \citet{rosenblatt1969conditional} proposed a kernel based estimate of $p_{X,Z}$ and $p_{Z}$ and combined them using the formula $p_{X|Z} = \frac{p_{X,Z}}{p_{Z}}$. Assuming that $p_{X | Z}$, $p_{Z}$ and the conditional mean of $X | Z$ have continuous second derivatives, \citet{hyndman1996estimating} analyzed the mean integrated squared error of a ratio between two kernel smoother estimators in the $d_X = 1$ dimensional case. \citet{bashtannyk2001bandwidth} looked into optimal bandwidth selection in the aforementioned kernel smoother estimate. \citet{fan1996estimation} used locally polynomial regression to develop nonparametric estimate of the conditional density function in nonlinear dynamical systems. In a follow-up work, \citet{fan2004crossvalidation} used cross-validation to select the bandwidth of the double-kernel estimator developed by \citet{fan1996estimation}. \citet{hall2004cross} used cross-validation to automatically reduce the number of relevant covariates when estimating the conditional density, but they did not study the minimax rates of estimation. In a related work, \citet{hall2005approximating} proposed a different method for estimating the density using dimension reduction. \citet{hall1999methods} studied methods for conditional distribution estimation based on parametric and nonparametric techniques, including a logistic model and a Nadaraya-Watson estimator. A different method using dimension reduction was proposed by \citet{efromovich2010dimension} where the author used an orthogonal series based approach. Chagny \cite{chagny2013warped} used an expansion of a ``warped'' conditional density onto a space spanned by orthonormal bases. Recently, \citet{cevid2020distributional} studied conditional density estimation using an adapted Random Forest algorithm. In conclusion, although there has been some work on conditional density estimation, the minimax optimal rate is an open question. In this paper we address this question for the loss function \eqref{loss:function:def} under certain smoothness assumptions on the conditional distributions $p_{X | Z = z}$.

\subsection{Summary of Results}


We begin by showing that conditional density estimation is impossible if one does not impose sufficient assumptions on the class of distributions. In particular, assuming that $p_{X|Z}$ is smooth in $x$ for all $z$ is not enough and further assumptions are needed. We formally prove this fact by arguing that for any sample size $n \in \mathbb{N}$, there exists a finite class of distributions whose conditional densities $p_{X|Z}$ are H\"older smooth (see Definition \ref{holder:smooth:def}) in $x$ for all $z$, for which the worst case loss is bounded from below by a constant. This result motivates the assumptions that we impose next.

We formalize a class of distributions $\mathcal{P}_{\beta, \gamma}$ consisting of conditional densities that are H\"older smooth with smoothness $\beta$ in $x$, and $\gamma$-total variation ($\gamma$-TV) smooth in $z$ (see Definition \ref{TV:smooth:def}). We show the following result:
\begin{align*}
\inf_{\hat p}\sup_{p \in \mathcal{P}_{\beta, \gamma}} \mathbb{E}_p \int \int |\hat p_{X|Z}(x |z) - p_{X|Z}(x |z)| p_{Z}(z) d x d z \asymp n^{\frac{-1}{\beta^{-1} d_X + \gamma^{-1} d_Z + 2}},
\end{align*}
where $\asymp$ means equality up to constant factors, and $\mathbb{E}_p$ is the expectation over $n$ i.i.d. samples, each of which comes from the distribution $p$. This minimax rate is achieved by a kernel-based estimator, which is defined in \eqref{holder:estimate}. Furthermore, observe that there is a curse of dimensionality, where the dimensions $d_X, d_Z$ may have different effects on the rate depending on the corresponding smoothing parameters $\beta$ and $\gamma$. 

Finally, we devise an adaptive estimator to achieve the minimax optimal rate without the need to know the values of the smoothness parameters $\beta$ and $\gamma$ in advance. Our estimator is based on the work of Yatracos \cite{yatracos1985rates}, but requires delicate care and crucial modifications, since we do not possess knowledge of, and are not willing to make any assumptions on the marginal $p_Z$.



\subsection{Notation}
The following notations will be used throughout the paper. We use $p_{X,Z} = p_{X | Z} \cdot p_{Z}$ to denote any joint distribution (and density function) of the pair of random variables $(X,Z)$. We also use $p_{X | Z}(x | z)$, $p_{X | Z = z}$ to denote the conditional density function and the conditional distribution of $X | Z = z$ respectively, and $p_{Z}$ to denote the marginal distribution (and density function) of $Z$. For an integer $n$ we use the shorthand $[n] = \{1,\ldots, n\}$. 

We also use multi-index notations. Suppose we have vectors $x\allowdisplaybreaks = (x_1, \ldots, x_{d_X})$, $\alpha = (\alpha_1, \ldots, \alpha_{d_X})$ such that $x \in \mathbb{R}^{d_X}, \alpha \in \mathbb{R}_+^{d_X}$, where $\RR_+ = \{x \in \RR | x \geq 0\}$, then we have
\begin{align*}
\|\alpha\|_1 = \sum_{i=1}^{d_X} |\alpha_i|, \quad \alpha! = \prod_{i=1}^{d_X} \alpha_i !, \quad x^{\alpha} = \prod_{i=1}^{d_X} x_i^{\alpha_i}.
\end{align*}
Furthermore
\begin{align*}
D^{\alpha}f = \frac{\partial^{\|\alpha\|_1} f}{\partial x_1^{\alpha_1} \dots \partial x_{d_X}^{\alpha_{d_X}}}.
\end{align*}
We let ${\lfloor}\beta{\rfloor}$ denote the greatest integer strictly less than the real number $\beta$. We also use $\lesssim, \gtrsim$ to mean inequalities up to universal constants, and we write $f(n) \asymp g(n)$ if both $f(n) \lesssim g(n)$ and $f(n) \gtrsim g(n)$ hold. 

\section{Impossibility}
In this section, we will show that it is, in general, impossible to estimate the conditional density at a reasonable rate unless some assumptions on the class of distributions are imposed. 
Importantly, we show that even if one is willing to assume that $p_{X|Z}$ is smooth in $x$ for all $z$, it is still insufficient, and more assumptions are needed. 
Intuitively, when the conditioning variable $Z$ has a continuous density we observe no replicates (multiple samples with identical $Z$ values) and it is necessary to impose that the conditional densities $p_{X|Z}$ 
are smooth in $z$ in order to reliably estimate $p_{X|Z}$. Our impossibility result, Theorem~\ref{impossible:thm}, formalizes this intuition.

As detailed in the introduction, for an estimate $\hat p_{X|Z}(x |z)$, based on a dataset $\cD_n = \{(X_1, Z_1), \allowbreak \ldots, (X_n, Z_n)\}$ with $n$ observations and a density $p_{X|Z}(x |z)$, we will use the loss function \eqref{loss:function:def}. In order to formally state our result, we first define H\"older smoothness. 
\begin{definition}[H\"older smoothness]\label{holder:smooth:def}
We say that the collection of conditional densities $p_{X|Z}(x|z)$ for $Z \in [0,1]^{d_Z}$ is H\"older smooth with some constant $\holdercon$ and smoothness $\beta$, where $\beta, \holdercon$ are positive numbers, if it is $\ell = {\lfloor}\beta{\rfloor}$ times differentiable, and for all $x, x' \in [0,1]^{d_X}$, $z \in [0,1]^{d_Z}$ satisfies
\begin{align*}
\sup_{\alpha} | D^{\alpha} p_{X|Z}(x|z) - D^{\alpha} p_{X|Z}(x'|z)| \leq \holdercon \|x - x'\|_1^{\beta - \ell}, \quad \text{for all } \: \alpha \: \text{such that} \: \|\alpha\|_1 = \ell, \alpha \in \mathbb{N}_{0}^{d_X},
\end{align*}
where $\alpha = (\alpha_1,...,\alpha_{d_X})$ and  $\mathbb{N}_{0} = \mathbb{N} \cup \{0\}$. \end{definition}

We then have the following result:
\begin{theorem}[Impossibility of Conditional Density Estimation]\label{impossible:thm} Let the sample size $n$ be any fixed integer. 
Then for any constants $\beta$, $\holdercon$ and $\varepsilon > 0$, there exists a finite class of distributions $\cC(\beta, \holdercon)$ (whose cardinality depends on $n$ and $\varepsilon$) on $[0,1]^{d_X + d_Z}$ satisfying the following three properties:
\begin{itemize}
\item[i.] the marginal $Z$ densities are absolutely continuous with respect to the Lebesgue measure on $[0,1]^{d_Z}$, with density equal to $p_{Z}$ (which can be specified by the user), 
\item[ii.] the conditional distributions $p_{X|Z}$ are H\"older smooth with constant $\holdercon$ and smoothness $\beta$, and
\item[iii.] the following inequality holds
\begin{align*}
\inf_{\hat p} \max_{p \in \cC(\beta, \holdercon)} \EE_{p} \int \int |\hat p_{X|Z}(x |z) - p_{X|Z}(x |z)| p_{Z}(z) d x d z \geq \kappa - \epsilon, 
\end{align*}
where $\kappa$ is some positive constant that depends on $\beta$ and $\holdercon$. More specifically, if $\beta \leq 1$ and $\holdercon < \frac{2}{d_X^{\beta}}$, we have $\kappa = \frac{\holdercon^2 d_X^{(2\beta - 1)}}{144}$, and otherwise $\kappa = \frac{1}{36 d_X}$.
\end{itemize}
\end{theorem}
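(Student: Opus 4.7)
\textit{Proof plan.} The intuition is that when $p_Z$ is absolutely continuous, the samples land at $n$ distinct $z$ values, so any perturbation of $p_{X \mid Z = z}$ that is local in $z$, smooth in $x$, and mean-zero in $x$ is undetectable. I realize this through an adversarial class indexed by sign vectors. Since $p_Z$ is absolutely continuous, one can iteratively bisect along quantiles of $p_Z$ to partition $[0,1]^{d_Z}$ into $N$ disjoint measurable cells $C_1, \ldots, C_N$ with $\int_{C_i} p_Z(z)\,dz = 1/N$, where $N = N(n,\varepsilon)$ will be chosen large. Fix a ``perturbation'' $\phi \colon [0,1]^{d_X} \to \RR$ that is H\"older smooth in $x$ with constant $\holdercon$ and order $\beta$, satisfies $\int \phi(x)\,dx = 0$, and $|\phi| \leq 1$ pointwise. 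For each $\sigma \in \{-1,+1\}^N$, define
\begin{align*}
p^\sigma_{X \mid Z = z}(x) \eqdef 1 + \sigma_i \phi(x), \quad z \in C_i, \qquad p^\sigma_{X,Z}(x,z) \eqdef p^\sigma_{X \mid Z = z}(x)\, p_Z(z),
\end{align*}
and let $\cC(\beta, \holdercon) \eqdef \{p^\sigma : \sigma \in \{\pm 1\}^N\}$. By construction this finite class has the prescribed marginal $p_Z$, and every conditional density in $x$ is H\"older smooth with constant $\holdercon$ and order $\beta$.

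Next, lower-bound the minimax risk by the Bayes risk under the uniform prior on $\sigma$. Let $E_i$ denote the event that none of $Z_1,\ldots,Z_n$ lies in $C_i$; then $\Pr(E_i) = (1-1/N)^n$. On $E_i$ the data likelihood $\prod_j (1 + \sigma_{k(Z_j)}\phi(X_j)) p_Z(Z_j)$ does not depend on $\sigma_i$, so under the uniform prior the posterior of $\sigma_i$ given $(\cD_n, E_i)$ is uniform on $\{\pm 1\}$. Hence for any estimator $\hat p$ and any fixed $z \in C_i$, writing $q(x)\eqdef\hat p_{X \mid Z}(x\mid z)$, the triangle inequality yields
\begin{align*}
\EE_{\sigma_i \mid \cD_n, E_i}\!\int |q(x) - 1 - \sigma_i \phi(x)|\,dx = \tfrac{1}{2}\!\int\!\left(|q - 1 - \phi| + |q - 1 + \phi|\right)dx \geq \int |\phi(x)|\,dx.
\end{align*}
Multiplying by $p_Z(z)$, integrating over $z \in C_i$, summing over $i$, and using $\sum_i \int_{C_i} p_Z = 1$ gives
\begin{align*}
\inf_{\hat p}\max_{p \in \cC(\beta, \holdercon)} \EE_p \!\int\!\!\int |\hat p_{X \mid Z} - p_{X \mid Z}|\, p_Z\, dx\, dz \;\geq\; \left(1 - \tfrac{1}{N}\right)^{\!n} \int |\phi(x)|\,dx.
\end{align*}
Choosing $N$ large enough that $(1-1/N)^n \geq 1 - \varepsilon/\int |\phi|$ then produces the required lower bound $\int |\phi| - \varepsilon$.

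The remaining step, which I expect to be the main obstacle, is to exhibit a $\phi$ with $\int |\phi|\,dx \geq \kappa$ achieving the two claimed constants. The tension is between the H\"older constraint and the amplitude constraint $|\phi|\leq 1$: for $\beta \leq 1$, the $\ell_1$-diameter $d_X$ of $[0,1]^{d_X}$ forces $\sup \phi - \inf \phi \leq \holdercon d_X^\beta$, which is strictly less than $2$ precisely when $\holdercon < 2/d_X^\beta$, making the H\"older constraint binding. In this regime I would take $\phi$ to be a centred, suitably rescaled ``ridge'' depending on a single linear combination of the coordinates (e.g.\ a shifted $\beta$-power of $\sum_i x_i/d_X$) and optimize the scaling to achieve $\int |\phi| \geq \holdercon^2 d_X^{2\beta-1}/144$. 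In the complementary regime ($\beta > 1$ or $\holdercon \geq 2/d_X^\beta$) the amplitude constraint is binding; I would take $\phi$ to be a smoothed step along one coordinate, taking values close to $\pm 1$ on the two halves of $[0,1]^{d_X}$ with a narrow transition region whose width is chosen small enough that the H\"older bound with constant $\holdercon$ and order $\beta$ still holds, yielding $\int |\phi|\geq 1/(36 d_X)$.
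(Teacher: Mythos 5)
Your argument is correct in its core and is a genuinely different --- and cleaner --- route than the paper's. Both proofs exploit the identical structural observation: with $N$ equiprobable $p_Z$-cells and $N$ large, almost surely no cell contains two sample $Z_i$'s, so any cell with no samples carries no information about its local sign. But the paper turns this into a hypothesis-testing reduction (Markov's inequality, then a TV bound between the null and the uniform mixture over sign vectors, then the $\tfrac12$-threshold of indistinguishable hypotheses), paying a factor of $2$ for Markov and another factor of $2$ for the separation threshold, landing on $\kappa = \tfrac{1}{2}\,\TV(U,p_1)$. You instead bound the minimax risk by the Bayes risk under the uniform prior on $\sigma$ and observe that, conditional on the data, $\sigma_i$ remains exactly uniform on every unhit cell, which via $\tfrac12\int(|q-1-\phi|+|q-1+\phi|)\,dx \geq \int|\phi|\,dx$ gives an essentially tight pointwise lower bound of $\int|\phi|$ per cell. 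The result is a lower bound of $(1-1/N)^n\int|\phi|\,dx$, which is larger than the paper's by a factor of $4$ (since $\int|\phi| = 2\,\TV(U,1+\phi)$), so you recover the stated $\kappa$ with room to spare.

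The one place I would push back is your sketch of $\phi$ in the second regime ($\beta > 1$ or $\holdercon \geq 2/d_X^\beta$). A ``smoothed step with a narrow transition region'' will not satisfy the H\"older condition when $\beta > 1$ and $\holdercon$ is small: for $\beta > 1$ the constraint is on the modulus of continuity of the $\lfloor\beta\rfloor$-th derivative, and no function confined to $[0,1]$ that transitions from near $-1$ to near $+1$ can have, say, a uniformly tiny second derivative. (Also, narrowing the transition increases, not decreases, the H\"older constant; the direction in your sentence is reversed.) The fix is simple and is what the paper does: take $\phi$ linear, $\phi(x) = 2\bigl(\sum_i x_i/d_X - \tfrac12\bigr)$. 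A linear function has all derivatives of order $\geq \lfloor\beta\rfloor$ constant or zero, so it is H\"older smooth for every $\beta > 1$ and every $\holdercon > 0$; it is bounded by $1$, mean-zero, and the same elementary computation as in the paper (\(\int|\phi| \geq (1-c)^2/(9d_X)\) with $c=0$) gives $\int|\phi| \geq 1/(9d_X) > 1/(36 d_X)$. In the first regime your ``rescaled ridge'' idea is fine; the paper's linear $\phi$ with $c = 1 - \holdercon d_X^\beta/2$ also works directly and yields $\int|\phi| \geq \holdercon^2 d_X^{2\beta-1}/36 > \holdercon^2 d_X^{2\beta-1}/144$. With this substitution the proof is complete.
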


{\noindent \bf Remarks: } 
\begin{enumerate}
\item It is important to note that this result holds for any arbitrary marginal $Z$ density $p_Z$ (i.e. it can be chosen to be arbitrarily smooth and known to the statistician). Our result shows that consistent conditional density estimation is impossible when
the only assumptions made are that the marginal density $p_Z$, and the conditional densities $p_{X|Z}$ are smooth (no matter how smooth they are). 
\item A straightforward extension of our proof shows that one can further relax the condition on the marginal $p_Z$. If the absolutely continuous part of $Z$ has probability mass at least $\theta$ for some $\theta > 0$ then an identical argument will show that the minimax error can be made arbitrarily close to $\kappa \theta.$ 
\end{enumerate}
\noindent Here we provide a sketch of the proof, while the full proof is deferred to Section~\ref{sec:impossibleproof}. 

\begin{proof}[Proof Sketch]
We define a ``null'' distribution by taking $p_{X|Z}$ to be uniform on $[0,1]^{d_X}$ for all values of $Z$. 
We then construct a family of ``alternate'' distributions which are perturbations of the null distribution constructed in the following way.
We first construct a pair of smooth distributions $p_1, p_2$ such that they yield that uniform distribution when mixed with equal weights, but which are individually sufficiently far from uniform. We divide the support of $Z$ into many small intervals, and in each interval, we randomly (with equal probability) set $p_{X|Z}$ to be either $p_1$ or $p_2$. This constructs a large family of possible alternate distributions.
We then argue that with high probability each sample point $Z_i$ falls into different intervals, and show that this in turn makes it impossible to distinguish whether the samples came from the null distribution or the uniform mixture over the possible alternate distributions.
\end{proof}

Theorem \ref{impossible:thm} illustrates that if we only assume that $p_{X|Z}$ is smooth in $x$ for all $z$ (e.g. H\"older smooth), we can construct a finite collection of distributions such that any estimate will produce an expected error of at least $\kappa$ in the worst case sense. Importantly, the proof makes use of the fact that we do not see replications of the densities $p_{X|Z}$ for different $Z$ values. We can remedy this by assuming that the distributions $p_{X|Z = z}$ vary smoothly with $z$. In the following section, we do so by imposing a Total Variation smoothness assumption on $z$, and show that under such conditions we can obtain reasonable (and minimax-optimal) bounds (i.e., bounds decreasing with the sample size) on the loss function. Intuitively, this happens since with additional smoothness assumptions, one can group observations whose $Z_i$ values are close, while this strategy is unavailable in the general setting.

\section{Upper Bound}\label{upper:bound:section}
In this section we propose an estimate $\hat{p}_{X|Z}(x|z)$ under certain smoothness assumptions on the class of distributions. Formally, suppose we have a joint distribution of two variables $(X,Z)$: $p_{X,Z}(x,z)$ where $x = (x_1, \ldots, x_{d_X}), z = (z_1, \ldots, z_{d_Z})$ and $X \in [0,1]^{d_X}$, $Z \in [0,1]^{d_Z}$. We assume that the conditional density $p_{X|Z}(x|z)$ satisfies H\"older smoothness in $x$ (see Definition \ref{holder:smooth:def}) and the following $\gamma$-total variation ($\gamma$-TV) smoothness in $z$. We denote the class of densities which satisfy our smoothness conditions by $\mathcal{P}_{\beta, \gamma}$.
\begin{definition}[$\gamma$-TV smoothness]\label{TV:smooth:def}
We say that the distribution is $\gamma$-total variation ($\gamma$-TV) smooth if the following inequality holds for some $0 < \gamma \leq 1$, and for all $x \in [0,1]^{d_X}$ and $z, z' \in [0,1]^{d_Z}$:
\begin{align*}
\|p_{X|Z = z} - p_{X|Z = z'} \|_1 \leq \tvcon\|z - z'\|_1^{\gamma},
\end{align*}
for some sufficiently large constant $\tvcon$.
\end{definition}
In the above, the $L_1$ distance between probability densities (equal to $2$ times the TV distance) is defined as: 
\begin{align*}
\|p_{X|Z = z} - p_{X|Z = z'} \|_1 = 2 \TV(p_{X|Z = z}, p_{X|Z = z'}) = \int |p_{X|Z }(x | z) - p_{X|Z }(x | z')| dx. 
\end{align*}
In other words, TV smoothness requires that the distributions $p_{X|Z = z}$ vary smoothly with $z$ in the $L_1$ sense. This assumption is inspired by \cite{neykov2020minimax}, where the authors used a similar assumption to establish the minimax rate for conditional independence testing. Furthermore, $\gamma \leq 1$ is required due to the following lemma:
\begin{lemma}\label{gamma:smaller:than:one}
Suppose $\gamma > 1$, and the inequality $\|p_{X|Z = z} - p_{X|Z = z'} \|_1 \leq W_2\|z - z'\|_1^{\gamma}$ from Definition \ref{TV:smooth:def} holds. Then it must be that $p_{X|Z = z} \equiv p_{X|Z = z'}$ for all $z, z' \in [0,1]^{d_Z}$.
\end{lemma}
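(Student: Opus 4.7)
The plan is to use a standard chaining argument that exploits how a Hölder-type condition with exponent strictly greater than one, applied on a convex domain, forces the function in question to be locally (and hence globally) constant. Concretely, fix any two points $z, z' \in [0,1]^{d_Z}$ and, for an arbitrary positive integer $n$, subdivide the line segment connecting them into $n$ equal pieces by setting $w_i = z + (i/n)(z' - z)$ for $i = 0, 1, \ldots, n$. Since $[0,1]^{d_Z}$ is convex, each $w_i$ lies inside the cube, so the hypothesis of the lemma can be applied to every consecutive pair $(w_{i-1}, w_i)$.

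Next, apply the triangle inequality for the $L_1$ distance between densities together with the assumed bound to get
\begin{align*}
\|p_{X|Z = z} - p_{X|Z = z'}\|_1
&\leq \sum_{i=1}^{n} \|p_{X|Z = w_{i-1}} - p_{X|Z = w_i}\|_1
\leq \sum_{i=1}^{n} W_2 \|w_{i-1} - w_i\|_1^{\gamma} \\
&= n \cdot W_2 \cdot \left(\frac{\|z - z'\|_1}{n}\right)^{\gamma}
= W_2 \|z - z'\|_1^{\gamma} \cdot n^{1 - \gamma}.
\end{align*}
Because $\gamma > 1$, the factor $n^{1-\gamma}$ tends to $0$ as $n \to \infty$, while the left-hand side does not depend on $n$. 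Letting $n \to \infty$ therefore forces $\|p_{X|Z = z} - p_{X|Z = z'}\|_1 = 0$, which means $p_{X|Z=z} \equiv p_{X|Z=z'}$ (Lebesgue-almost everywhere in $x$). Since $z, z'$ were arbitrary, the conclusion follows.

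There is not really a serious obstacle here; the only point that requires a moment of care is verifying that the intermediate points $w_i$ remain in the domain $[0,1]^{d_Z}$ so that the assumed inequality is legitimately applicable at each step, which is immediate from convexity of the cube. One might also note that the conclusion $p_{X|Z=z} \equiv p_{X|Z=z'}$ is understood in the almost-everywhere sense consistent with the conditional density being defined up to a null set in $x$, which is the natural interpretation throughout the paper.
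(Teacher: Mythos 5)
Your proof is correct and uses essentially the same chaining argument as the paper: subdivide the segment from $z$ to $z'$, apply the triangle inequality together with the $\gamma$-TV bound on each piece, and let the number of pieces tend to infinity so that the factor $n^{1-\gamma}$ vanishes. The paper phrases it in terms of $\TV$ rather than the $L_1$ norm, but since $\|\cdot\|_1 = 2\TV$ this is an immaterial difference.
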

\begin{proof}
Fix any two points $z, z'$ in $[0, 1]^{d_Z}$. Take $\alpha_j = \frac{j}{k + 1}$, $j = 0,1,\ldots, k + 1$. Let $z_j = \alpha_j z + (1 - \alpha_j) z'$. Then by $\gamma$-TV smoothness with $\gamma > 1$ we have
\begin{align*}
\TV(p_{X|Z = z}, p_{X|Z = z'}) \leq \sum_{i=0}^k \TV(p_{X|Z = z_i}, p_{X|Z = z_{i + 1}}) \leq W_2 \left(\frac{\|z-z'\|_1}{k+1}\right)^{\gamma} (k + 1).
\end{align*}
Taking $k \to \infty$ lets us conclude that $p_{X|Z = z} = p_{X|Z = z'}$ as desired.
\end{proof}

Finally, the estimator we propose under the H\"{o}lder smoothness assumption makes use of kernels. Below we define a class of kernels that can be used in the estimator to achieve the minimax optimal rate.
\begin{definition}[Appropriate Kernels]\label{kernel:def}
We say that a kernel $K : \mathbb{R}^{d_X} \to \mathbb{R}$ is appropriate if
\begin{align*}
\int K(u) du = 1, \quad \int u^{\alpha} K(u) du = 0, \: \text{for all } \: \bm{\alpha } \: \text{such that} \: \|\alpha\|_1 \leq \ell, \alpha \in \mathbb{N}_{0}^{d_X},
\end{align*}
where $\mathbb{N}_{0} = \mathbb{N} \cup \{0\}$. In addition the kernel should satisfy
\begin{align*}
\int K^2(u)du < \infty
\end{align*}
and
\begin{align*}
\int |K(u)| \cdot |u^{\alpha}|< \infty,  \: \text{for all } \: {\alpha } \: \text{such that} \: \|\alpha\|_1 \leq \beta, \alpha \in \RR_{+}^{d_X},
\end{align*}
where $\RR_+ = \{x \in \RR ~|~ x \geq 0\}$.
\end{definition}

%
%
%
%
%

Importantly, appropriate kernels do exist, and one method of constructing them is detailed in Lemma \ref{kernel:construct} below.

\begin{lemma}[Appropriate Kernels' Construction]\label{kernel:construct} 
We can construct appropriate kernels $K : \mathbb{R}^{d_X} \to \mathbb{R}$ using a product kernel $K(u) = \prod K_i(u_i)$, where each $K_i$ is a kernel of order $\ell$ as defined in \cite[Proposition 1.3]{tsybakov09introduction}.
\end{lemma}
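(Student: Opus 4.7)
The proof is a direct verification of the four bullet points in Definition~\ref{kernel:def} by exploiting the product structure and invoking the one-dimensional properties of the kernels of order $\ell$ from \cite[Proposition 1.3]{tsybakov09introduction}. The plan is to apply Fubini/Tonelli to each integral in turn and reduce to the known one-dimensional facts. Recall that a one-dimensional kernel $K_i$ of order $\ell$ in the sense of Tsybakov satisfies $\int K_i(u)du = 1$, $\int u^j K_i(u)du = 0$ for $j = 1, \ldots, \ell$, and (in the standard construction) is bounded and compactly supported on $[-1,1]$.

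First, the normalization $\int K(u)du = \prod_{i=1}^{d_X}\int K_i(u_i)du_i = 1$ follows from Fubini. Second, for the vanishing moment condition, pick any multi-index $\alpha \in \mathbb{N}_0^{d_X}$ with $1 \leq \|\alpha\|_1 \leq \ell$, so that at least one coordinate satisfies $\alpha_{i_0} \geq 1$, and each coordinate satisfies $\alpha_{i_0} \leq \|\alpha\|_1 \leq \ell$. Then by Fubini,
\begin{align*}
\int u^{\alpha} K(u)\,du = \prod_{i=1}^{d_X} \int u_i^{\alpha_i} K_i(u_i)\,du_i,
\end{align*}
and the factor corresponding to $i_0$ vanishes by the order-$\ell$ property of $K_{i_0}$, so the whole product is $0$.

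Third, square integrability is immediate, since $\int K^2(u)\,du = \prod_i \int K_i^2(u_i)\,du_i$, and each one-dimensional integral is finite because $K_i$ is bounded with compact support. Fourth, for the weighted-moment condition, fix any $\alpha \in \mathbb{R}_+^{d_X}$ with $\|\alpha\|_1 \leq \beta$; then $\alpha_i \leq \beta$ for each $i$, and by Fubini
\begin{align*}
\int |K(u)|\,|u^{\alpha}|\,du = \prod_{i=1}^{d_X} \int |K_i(u_i)|\,|u_i|^{\alpha_i}\,du_i,
\end{align*}
which is finite because each $K_i$ is bounded and supported on $[-1,1]$, so $|u_i|^{\alpha_i} \leq 1$ on the support for every $\alpha_i \geq 0$.

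There is no serious obstacle in this argument; it is a mechanical verification. The only subtlety worth noting is that in the fourth condition the multi-index $\alpha$ is allowed to have non-integer entries in $\mathbb{R}_+^{d_X}$, but compact support of each $K_i$ trivializes the finiteness of the weighted integrals regardless of whether $\alpha_i$ is integer valued. A further mild point to flag in the write-up is that the definition's vanishing-moment condition should be read as applying to $\alpha$ with $1 \leq \|\alpha\|_1 \leq \ell$ (since $\alpha = 0$ is governed by the separate normalization clause), which is exactly what the product construction delivers.
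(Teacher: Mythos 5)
Your proof is correct and takes essentially the same route as the paper's: verify the four conditions of Definition~\ref{kernel:def} coordinate-by-coordinate via the product structure, invoking the facts that the Tsybakov order-$\ell$ kernels are bounded, compactly supported on $[-1,1]$ (being restrictions of Legendre-polynomial combinations), have unit integral, and have vanishing moments up to order $\ell$. Your explicit Fubini bookkeeping and the remark about the vanishing-moment clause applying to $1 \le \|\alpha\|_1 \le \ell$ (rather than including $\alpha=0$) make the argument cleaner than the paper's terse version, but the content is the same.
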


We prove this lemma in Appendix \ref{app:upper}. We now formally define the estimator. Recall that we are interested in estimating the conditional density $p_{X|Z}(x |z)$ with the estimate $\hat p_{X|Z}(x |z)$ under the loss function \eqref{loss:function:def}. We propose a histogram-type estimator that uses kernel smoothing. 
Namely, bin $[0,1]$ into intervals $A_1,\ldots, A_m$ of equal length ($m^{-1}$), and consider the hyper-rectangles created from the Cartesian product of such intervals over $[0,1]^{d_Z}$.
We define the following shorthand notations: let $\bar{j} = (j_1, ..., j_{d_Z}) \in [m]^{d_Z}$ denote the bin indices for some $d_Z$ dimensional hyper-rectangle. Then $A_{\bar{j}} = \prod_{k=1}^{d_Z} A_{j_k}$ denotes that hyper-rectangle itself, where $\prod$ stands for the Cartesian product between sets. Finally, define the estimate

\begin{align}\label{holder:estimate} 
\hat{p}_{X|Z}(x|z) = \sum_{\bar{j} \in [m]^{d_Z}} \mathbbm{1}\left(z \in A_{\bar{j}} \right) \frac{\sum_{i \in [n]} \mathbbm{1}(Z_i \in A_{\bar{j}}) K(\frac{X_i - x}{h})}{h^{d_X} \sum_{i \in [n]} \mathbbm{1} (Z_i \in A_{\bar{j}})},
\end{align}
where $K: \mathbb{R}^{d_X} \to \mathbb{R}$ is an appropriate multi-dimensional kernel as in Definition \ref{kernel:def} and $\frac{0}{0}$ is understood as $0$. Note that we only apply binning to $Z$ here, and apply kernel smoothing to $X$. Our estimator \eqref{holder:estimate} need not be a proper density since it need not be positive, but we provide a simple modification below. This modified estimator has the same properties as $\hat p_{X|Z}$ but is in fact a proper density. This will play an important role when we devise our adaptive estimator in Section \ref{yatracos:section}. 

\begin{theorem}[H\"older Upper Bound] \label{upper:bound:holder:thm}
Suppose that $p_{X,Z} \in \mathcal{P}_{\beta, \gamma}$. Using the estimate \eqref{holder:estimate} with an appropriate kernel as per Definition \ref{kernel:def}, and selecting the parameters  $h \asymp n^{\frac{-1}{d_X + d_Z \beta \gamma^{-1} + 2\beta}}$ and $m \asymp n^{\frac{\beta}{d_X + d_Z \beta \gamma^{-1} + 2\beta}}$ (for some appropriately selected constants) we have that 
\begin{align}\label{eq:thm2}
\mathbb{E} \int \int |\hat p_{X|Z}(x |z) - p_{X|Z}(x |z)| p_{Z}(z) d x d z \lesssim n^{\frac{-1}{\beta^{-1} d_X + \gamma^{-1} d_Z + 2}} =: r_n(\beta, \gamma, d_X, d_Y).
\end{align}
\end{theorem}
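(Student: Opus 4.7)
The plan is a bias--variance decomposition of the integrated $L^1$ loss, done in a way that exploits the $p_Z$-weighting and never requires any smoothness of $p_Z$. For each bin $\bar{j}\in[m]^{d_Z}$ let
\begin{align*}
\pi_{\bar{j}} := \mathbb{P}(Z\in A_{\bar{j}}), \qquad N_{\bar{j}} := \sum_{i\in[n]}\mathbbm{1}(Z_i\in A_{\bar{j}}), \qquad \bar p_{\bar{j}}(x) := \frac{1}{\pi_{\bar{j}}} \int_{A_{\bar{j}}} p_{X|Z}(x|z')\, p_Z(z')\, dz',
\end{align*}
so that on $\{N_{\bar{j}}\geq 1\}$ the estimator \eqref{holder:estimate} reduces, for $z\in A_{\bar{j}}$, to a single value $\hat p_{\bar{j}}(x)$ (and is zero on $\{N_{\bar{j}}=0\}$). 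Inserting $\bar p_{\bar{j}}$ by the triangle inequality, the target loss splits as
\begin{align*}
\sum_{\bar{j}} \pi_{\bar{j}}\, \mathbb{E}\!\int |\hat p_{\bar{j}}(x)-\bar p_{\bar{j}}(x)|\,dx \;+\; \sum_{\bar{j}} \int_{A_{\bar{j}}} p_Z(z) \!\int |\bar p_{\bar{j}}(x)-p_{X|Z}(x|z)|\,dx\,dz,
\end{align*}
a stochastic term and a deterministic ``TV bias'' term respectively.

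For the TV bias, since $\bar p_{\bar{j}}$ is a $p_Z$-weighted mixture of the conditionals $p_{X|Z}(\cdot|z')$ with $z'\in A_{\bar{j}}$ and the $\ell_1$-diameter of $A_{\bar{j}}$ is $d_Z/m$, Definition \ref{TV:smooth:def} gives $\int |\bar p_{\bar{j}}(x)-p_{X|Z}(x|z)|\,dx \leq W_2(d_Z/m)^\gamma$ for every $z\in A_{\bar{j}}$. Summing against the weight and using $\sum_{\bar{j}}\pi_{\bar{j}}=1$ yields a TV bias of order $m^{-\gamma}$.

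For the stochastic term, further decompose $\hat p_{\bar{j}}-\bar p_{\bar{j}} = (\hat p_{\bar{j}} - K_h\ast\bar p_{\bar{j}}) + (K_h\ast\bar p_{\bar{j}} - \bar p_{\bar{j}})$. The second piece is deterministic kernel bias: $\bar p_{\bar{j}}$ inherits $\beta$-H\"older smoothness (uniformly in $\bar{j}$) because $D^\alpha$ distributes through the $p_Z$-weighted integral. A multi-index Taylor expansion at $x$ combined with the vanishing-moment and integrability properties of $K$ from Definition \ref{kernel:def} gives $|K_h\ast\bar p_{\bar{j}}(x)-\bar p_{\bar{j}}(x)|\lesssim h^\beta$ at interior $x$; integrating over $[0,1]^{d_X}$ and absorbing the width-$O(h)$ boundary strip using boundedness of $\bar p_{\bar{j}}$ (itself a consequence of H\"older smoothness and $\int\bar p_{\bar{j}}=1$) yields a kernel bias of $\lesssim h^\beta$. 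For the first piece, condition on $\{Z_i\}_{i=1}^n$: the $X_i$'s with $Z_i\in A_{\bar{j}}$ are i.i.d.\ draws from $\bar p_{\bar{j}}$, so $\hat p_{\bar{j}}(x)$ is an average of $N_{\bar{j}}$ bounded-second-moment variables with mean $(K_h\ast\bar p_{\bar{j}})(x)$ and per-term second moment at most $\|\bar p_{\bar{j}}\|_\infty h^{-d_X}\int K^2$. Jensen then yields
\begin{align*}
\mathbb{E}\!\left[\mathbbm{1}(N_{\bar{j}}\geq 1)\int |\hat p_{\bar{j}}-K_h\ast\bar p_{\bar{j}}|\,dx \;\Big|\; N_{\bar{j}}\right] \;\lesssim\; \sqrt{\tfrac{1}{N_{\bar{j}} h^{d_X}}},
\end{align*}
and combining the standard binomial moment bound $\mathbb{E}[\mathbbm{1}(N_{\bar{j}}\geq 1)/\sqrt{N_{\bar{j}}}] \leq \sqrt{2/(n\pi_{\bar{j}})}$ with Cauchy--Schwarz $\sum_{\bar{j}}\sqrt{\pi_{\bar{j}}}\leq m^{d_Z/2}$ gives a total variance contribution of order $\sqrt{m^{d_Z}/(nh^{d_X})}$. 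The residual event $\{N_{\bar{j}}=0\}$ makes $\int|\hat p_{\bar{j}}-\bar p_{\bar{j}}|\,dx=1$, contributing $\sum_{\bar{j}}\pi_{\bar{j}}(1-\pi_{\bar{j}})^n \leq \sum_{\bar{j}}\pi_{\bar{j}}e^{-n\pi_{\bar{j}}} \leq m^{d_Z}/(en)$ via the elementary bound $xe^{-nx}\leq 1/(en)$.

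Assembling the four pieces gives the bound $\lesssim m^{-\gamma} + h^\beta + \sqrt{m^{d_Z}/(nh^{d_X})} + m^{d_Z}/n$; balancing the first three (the last is dominated) leads to exactly the stated choices of $h$ and $m$, at which all terms are $\lesssim r_n(\beta,\gamma,d_X,d_Z)$. The steps that require the most care are (i) the Cauchy--Schwarz collapse $\sum\sqrt{\pi_{\bar{j}}}\leq m^{d_Z/2}$, which is precisely what purges any dependence on $p_Z$ from the final rate, and (ii) the handling of the rare event $\{N_{\bar{j}}=0\}$: because $p_Z$ can place arbitrarily tiny mass in some bins, one cannot invoke any uniform lower bound on $n\pi_{\bar{j}}$ and must instead extract the worst-case bound from the shape of $x\mapsto xe^{-nx}$ directly. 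The usual boundary issue in the Taylor expansion of the kernel bias is a minor technicality, controlled by the compact support of $K$ together with the uniform boundedness of $\bar p_{\bar{j}}$.
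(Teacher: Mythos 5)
Your proof is correct and reaches the same rate through the same broad outline as the paper's — a bias/variance split after binning in $z$, a Taylor/vanishing-moment argument for the kernel bias, and a Cauchy--Schwarz step $\sum_{\bar{j}}\sqrt{\pi_{\bar{j}}}\le m^{d_Z/2}$ to eliminate all dependence on $p_Z$ — but several of the intermediate lemmas are genuinely different and arguably cleaner. You anchor the decomposition at the population conditional $\bar p_{\bar{j}}=p_{X|Z}(\cdot\mid Z\in A_{\bar{j}})$ rather than at $\tilde p_{X|Z}=\mathbb{E}[\hat p_{X,\bar{j}}]$ as the paper does; the paper's choice forces it to carry the factor $(1-\mathbb{P}(Z\in A_{\bar{j}}^c)^n)$ through Lemma~\ref{lemma:expected}, whereas your $\bar p_{\bar{j}}$ isolates the empty-bin event into a separate, easily bounded piece. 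For that piece you use the elementary fact $x e^{-nx}\le 1/(en)$ to get $\sum_{\bar{j}}\pi_{\bar{j}}(1-\pi_{\bar{j}})^n\le m^{d_Z}/(en)$, which is a polynomially decaying but perfectly adequate bound and entirely bypasses the Lagrange-multiplier argument of Lemma~\ref{lagrange:multipliers:lemma}, which the paper uses to prove the (exponentially small, but not needed) bound $(1-1/m^{d_Z})^n$. For the stochastic term you condition first, exploit the conditional i.i.d.\ structure, and apply the binomial moment bound $\mathbb{E}[\mathbbm{1}(N_{\bar{j}}\ge 1)/\sqrt{N_{\bar{j}}}]\le\sqrt{2/(n\pi_{\bar{j}})}$; the paper instead proves the unconditional variance bound of Lemma~\ref{lemma:variance} and then applies Jensen. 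Both deliver the same $\sqrt{m^{d_Z}/(nh^{d_X})}$ after Cauchy--Schwarz, but your route avoids the careful bookkeeping over subsets $S\subseteq[n]$ in Lemmas~\ref{lemma:expected} and~\ref{lemma:variance}.

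One point to fix: you write ``condition on $\{Z_i\}_{i=1}^n$'' and then claim the $X_i$ with $Z_i\in A_{\bar{j}}$ are i.i.d.\ from $\bar p_{\bar{j}}$. Conditional on the \emph{exact values} $Z_1,\ldots,Z_n$, each $X_i$ has density $p_{X|Z=Z_i}$ — these are not identically distributed, and their common mean is not $(K_h\ast\bar p_{\bar{j}})(x)$. What you actually need is to condition only on the bin memberships $\{\mathbbm{1}(Z_i\in A_{\bar{j}})\}_{i\in[n],\,\bar{j}\in[m]^{d_Z}}$; conditional on $\{Z_i\in A_{\bar{j}}\}$, the density of $X_i$ is indeed $\bar p_{\bar{j}}$, and the estimator depends on the $Z$'s only through these memberships, so the argument goes through unchanged. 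This is the same conditioning the paper implicitly uses in Lemma~\ref{lemma:expected}. It is a one-line correction, not a structural gap.

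Both proofs share the standard unaddressed technicality of kernel-bias control at the boundary of $[0,1]^{d_X}$ (the Taylor expansion of $\bar p_{\bar{j}}(x+uh)$ requires $x+uh$ to remain in the domain); you at least flag it, while the paper's Lemma~\ref{lemma:upper} is silent. This does not distinguish the two approaches.
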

\noindent Theorem \ref{upper:bound:holder:thm} provides an upper bound for the estimator \eqref{holder:estimate}, and in Section \ref{lower:bound:section} we will derive a matching lower bound. Combining the two proves that \eqref{eq:thm2} is in fact the minimax optimal rate, and therefore no estimator can do better than $\hat p_{X | Z}$ up to constant factors. We defer the proof of Theorem \ref{upper:bound:holder:thm} to Section \ref{proof:of:upper:bound}. Roughly, the proof consists of a ``bias'' and ``variance'' decomposition (in quotation marks due to the fact that the two terms in the decomposition are not exactly bias and variance since our loss function is not the squared loss)  and carefully controlling both ensures that the rate exhibited in \eqref{eq:thm2} holds. \\

{\noindent \bf Remarks: } 
\begin{enumerate}
\item Equation \eqref{eq:thm2} shows that the estimator we propose exhibits the curse of dimensionality.
In particular, the presence of $\beta$ and $\gamma$ smoothness parameters makes sense intuitively. Recall that the class of distributions $\mathcal{P}_{\beta, \gamma}$ assumes conditional densities that are $\beta$-H\"older smooth in $x$, and $\gamma$-TV smooth in $z$, which matches the effects observed here. We note that when holding both dimensions $d_X, d_Z$ constant, our estimator performs better for higher values of $\beta$ or $\gamma$ smoothness (i.e. smoother densities). Indeed, as $\beta$ increases, the effect of $d_X$ on the estimator's effectiveness diminishes, and when $\beta \to \infty$, the dimension of $X$ has no effect on the minimax rate at all. In addition, since our proof does not require the restriction $\gamma \leq 1$, we note that when we take $\gamma \to \infty$, the minimax rate simply reduces to that of classical unconditional density estimation of $x$ in a multivariate setting with H\"older smoothness assumptions. In fact, by Lemma \ref{gamma:smaller:than:one} any value of $\gamma > 1$ can be thought of as $\gamma \rightarrow \infty$. 
\item Once again, we would like to stress the fact that we make no assumptions on the marginal density of $Z$. This is in stark contrast to an approach one may be compelled to take, by first estimating the joint density $p_{X, Z}(x,z)$, then the marginal density of $z$ by integrating $x$ out, and dividing the two to arrive at an estimate of $p_{X|Z}$. This implies consistently estimating $p_{X, Z}(x,z)$ and $p_{Z}(z)$, which likely requires assumptions on both of these densities, whereas that is not needed in our case. In fact, Theorem \ref{example:greater:thm} provides examples of one such class of H\"older smooth densities where our approach is minimax optimal regardless of what $p_{Z}$ is, whereas the aforementioned approach will likely fail. 

\item Finally, as discussed earlier, there does not exist work closely comparable to ours. The most related paper is \cite{efromovich2007conditional}, where the author studied local minimax rates for conditional density estimation in a bivariate case (i.e. $d_X = d_Z = 1$). Furthermore, \cite{efromovich2007conditional} imposed vastly different assumptions (e.g. the conditional densities are assumed to belong to a perturbed Sobolev class), and utilized different loss functions. The difference in the two problem settings renders the comparison of the resulting minimax rates nonproductive.
\end{enumerate}



We now provide a modified estimator of \eqref{holder:estimate} to ensure that it is a proper density. Consider the following estimator: 
\begin{itemize}
\item if $\hat p_{X|Z = z} \not \equiv 0$ (which implies $\int \hat p_{X|Z}(x|z) dx = 1$), define $\bar p_{X|Z}(x |z) = C^{-1} (\hat p_{X|Z}(x |z))_+$ where $C = \int (\hat p_{X|Z}(x |z))_+ d x$, and for a function $f(x)$ we denote $(f(x))_+ = f(x)\mathbbm{1}(f(x) \geq 0)$; 
\item else if $\hat p_{X|Z = z} \equiv 0$, define $\bar p_{X|Z = z} \equiv 1$. 
\end{itemize} 

\begin{lemma}\label{lemma:ptilde:density}
$\bar p_{X|Z}(x |z)$ satisfies \eqref{eq:thm2}, 
and is a proper density.
\end{lemma}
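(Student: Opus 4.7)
The plan is to reduce both claims to a pointwise-in-$z$ $L_1$ comparison between $\bar p_{X|Z}(\cdot|z)$ and $\hat p_{X|Z}(\cdot|z)$, which immediately transfers the rate from Theorem \ref{upper:bound:holder:thm}. The proper-density claim is essentially bookkeeping, so the main content is establishing a constant-factor pointwise inequality between the two estimators' deviations from the true conditional density.

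First I would verify that $\bar p_{X|Z=z}(\cdot|z)$ is a proper density by splitting into the two branches of its definition. In the degenerate branch ($\hat p_{X|Z=z} \equiv 0$) the claim is immediate since $\bar p_{X|Z=z} \equiv 1$ is the uniform density on $[0,1]^{d_X}$. In the non-degenerate branch, using the paper's observation that $\int \hat p_{X|Z}(x|z)\,dx = 1$, one gets
$$C = \int (\hat p_{X|Z}(x|z))_+ \, dx = 1 + \int (\hat p_{X|Z}(x|z))_- \, dx \geq 1 > 0,$$
so $C$ is well defined and $\bar p_{X|Z=z} = C^{-1}(\hat p_{X|Z=z})_+$ is manifestly non-negative and integrates to one.

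Next I would establish the pointwise bound
$$\int |\bar p_{X|Z}(x|z) - p_{X|Z}(x|z)|\,dx \leq 3\int |\hat p_{X|Z}(x|z) - p_{X|Z}(x|z)|\,dx.$$
The single observation driving everything is that $\int (\hat p_{X|Z}(x|z))_-\,dx \leq \int|\hat p_{X|Z}(x|z) - p_{X|Z}(x|z)|\,dx$, because on the set $\{\hat p < 0\}$ one has $(\hat p)_- = -\hat p \leq p - \hat p = |\hat p - p|$ since $p \geq 0$. From this, $|C - 1| \leq \int|\hat p - p|\,dx$ follows directly, and $\int|(\hat p)_+ - p|\,dx \leq 2\int|\hat p - p|\,dx$ follows by writing $(\hat p)_+ = \hat p + (\hat p)_-$ and applying the triangle inequality. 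Combining these through $|C^{-1}(\hat p)_+ - p| \leq (1 - C^{-1})(\hat p)_+ + |(\hat p)_+ - p|$ and integrating gives the factor of $3$. For the degenerate branch, the inequality holds since $\int|1 - p|\,dx \leq 2$ while $\int|\hat p - p|\,dx = \int p\,dx = 1$.

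Finally I would multiply the pointwise bound by $p_Z(z)$, integrate in $z$, and take expectation over the sample, which reduces the desired rate for $\bar p_{X|Z}$ to the rate for $\hat p_{X|Z}$ in Theorem \ref{upper:bound:holder:thm}, up to the constant factor of $3$. I do not anticipate any serious obstacle: the only care needed is tracking the normalizing constant $C$ on the event where $\hat p$ takes negative values, and separating the degenerate branch where $\bar p_{X|Z=z}$ is defined by fiat.
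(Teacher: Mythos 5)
Your proposal is correct and follows essentially the same route as the paper: split into the degenerate and non-degenerate branches, use $\int \hat p_{X|Z}(x|z)\,dx=1$ to control the normalizing constant $C$, bound $\int|(\hat p)_+ - p|\,dx$ by $\int|\hat p - p|\,dx$, and combine by the triangle inequality before integrating in $z$. The only cosmetic difference is that you lose a constant factor at one step (using $(\hat p)_+ = \hat p + (\hat p)_-$ gives you $\int|(\hat p)_+ - p| \le 2\int|\hat p - p|$, whereas the paper notes the pointwise inequality $|(\hat p)_+ - p| \le |\hat p - p|$, which holds since $p\ge 0$, giving factor $1$), so you end with constant $3$ instead of the paper's $2$; this is irrelevant to the claimed rate.
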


The details of Lemma \ref{lemma:ptilde:density} are given in Appendix \ref{app:upper}. Finally, recall our result in equation \eqref{eq:thm2}, which shows an upper bound for the loss function \eqref{loss:function:def} by a quantity which is of the same order as $r_n := r_n(\beta, \gamma, d_X, d_Y)$. Now consider our modified estimator $\bar p_{X|Z}(x |z)$. We have the following result which is a simple consequence of Markov's inequality:

\begin{lemma}\label{markovs:ineq:lemma} For any $\epsilon > 0$ there exists a set $A_\epsilon$ satisfying $\PP(Z \in A_\epsilon) \geq 1 - \epsilon$, such that for all $z \in A_\epsilon$ we have
\begin{align*}
\TV(\bar p_{X|Z}(x |z), p_{X|Z}(x |z)) \lesssim \frac{1}{\epsilon}r_n.
\end{align*}
\end{lemma}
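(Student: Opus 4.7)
The plan is to turn the integrated $L_1$ bound from Lemma \ref{lemma:ptilde:density} into a pointwise (in $z$) bound on a large enough subset, using nothing more than Fubini's theorem and Markov's inequality as the hint in the lemma's statement already suggests.

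First, I would unpack the loss function. Since the integrand $|\bar p_{X|Z}(x|z) - p_{X|Z}(x|z)| p_Z(z)$ is nonnegative, Fubini's theorem lets me interchange the data expectation and the $(x,z)$ integrals, and the inner integral over $x$ collapses to twice a total variation distance:
\begin{align*}
\int \mathbb{E}\bigl[2\TV(\bar p_{X|Z=z}, p_{X|Z=z})\bigr] \, p_Z(z)\, dz
= \mathbb{E}\int\!\!\int |\bar p_{X|Z}(x|z) - p_{X|Z}(x|z)| p_Z(z)\,dx\,dz \lesssim r_n,
\end{align*}
where the last inequality is Lemma \ref{lemma:ptilde:density}. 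Writing $f(z) := \mathbb{E}\bigl[2\TV(\bar p_{X|Z=z}, p_{X|Z=z})\bigr]$, this says $\mathbb{E}_{Z \sim p_Z}[f(Z)] \lesssim r_n$, i.e., there is a universal constant $C$ (the implicit constant from Theorem \ref{upper:bound:holder:thm}) such that $\mathbb{E}_{Z}[f(Z)] \leq C r_n$.

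Now I would apply Markov's inequality to the nonnegative random variable $f(Z)$: for any threshold $t > 0$,
\begin{align*}
\PP\bigl(f(Z) > t\bigr) \leq \frac{\mathbb{E}[f(Z)]}{t} \leq \frac{C r_n}{t}.
\end{align*}
Choosing $t = C r_n / \epsilon$ gives $\PP(f(Z) > C r_n/\epsilon) \leq \epsilon$. I would then set
\begin{align*}
A_\epsilon := \bigl\{ z \in [0,1]^{d_Z} : f(z) \leq C r_n / \epsilon \bigr\},
\end{align*}
so that $\PP(Z \in A_\epsilon) \geq 1 - \epsilon$ by construction, and for every $z \in A_\epsilon$ we have $\mathbb{E}[\TV(\bar p_{X|Z=z}, p_{X|Z=z})] \leq \tfrac{C}{2\epsilon} r_n$, which is exactly the claimed bound up to the absorbed constant. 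The only subtlety worth noting is that the total variation in the conclusion is the expected TV over the random data; this is the only meaningful reading because $\bar p_{X|Z}$ is itself random and no data-free pointwise statement can be made from an integrated expected bound. There is no real obstacle here: the proof is a one-step Markov argument on top of Lemma \ref{lemma:ptilde:density}, and the entire content is bookkeeping of constants.
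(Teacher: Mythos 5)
Your proposal is correct and follows essentially the same Markov-on-$Z$ argument as the paper. The one place you improve on the paper's exposition is in defining $f(z)$ as the \emph{deterministic} conditional expectation $\mathbb{E}_{\mathcal{D}_n}[2\TV(\bar p_{X|Z=z}, p_{X|Z=z})]$ (rather than the raw random TV), which makes $A_\epsilon$ explicitly a fixed Borel set and makes precise the observation you flag at the end — that the lemma's conclusion must be read as a bound on expected TV over the data.
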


\begin{proof}
Since $\bar p_{X|Z}$ is a density this allows us to write 
\begin{align*}
\int | \bar p_{X|Z}(x |z) - p_{X|Z}(x |z) | dx = 2\TV(\bar p_{X|Z}(x |z), p_{X|Z}(x |z)).
\end{align*}
Let $f(z) :=\TV(\bar p_{X|Z}(x |z), p_{X|Z}(x |z))$, and note that by Theorem \ref{upper:bound:holder:thm} and Lemma \ref{lemma:ptilde:density} we know that 
\begin{align*}
\EE f(z) \lesssim r_n.
\end{align*}
By the Markov's inequality we have
\begin{align*}
\mathbb{P}\left(f(z) > \frac{1}{\epsilon} \mathbb{E} [f(z)]\right) \leq \epsilon.
\end{align*}
This implies that there exists a subset of the support of $Z$ -- call it $A_\epsilon$ -- which has probability of at least $1 - \epsilon$ to occur, and for all $z \in A_\epsilon$, $f(z) \leq \frac{1}{\epsilon} \mathbb{E}[f(z)] \lesssim \frac{1}{\epsilon} r_n$. This completes the proof.
\end{proof}
Lemma \ref{markovs:ineq:lemma} illustrates that we can estimate well an overwhelming majority (in terms of the $Z$ distribution) of conditional densities in terms of TV distance. Next we move on to establish a lower bound.

\section{Minimax Lower Bound}\label{lower:bound:section}

In this section we produce a minimax lower bound for the estimation problem with the loss function~\eqref{loss:function:def}. We recall the definition of the class $\mathcal{P}_{\beta, \gamma}$ in~Section~\ref{upper:bound:section}.

\begin{theorem}[H\"older Lower Bound] \label{lower:bound:holder:thm} For any $p_{Z}(z) \geq c$ where $c$ is some constant, we have that
\begin{align*}
\inf_{\hat p}\sup_{p \in \mathcal{P}_{\beta, \gamma}} \mathbb{E}_p \int \int |\hat p_{X|Z}(x |z) - p_{X|Z}(x |z)| p_{Z}(z) d x d z \gtrsim n^{\frac{-1}{\beta^{-1} d_X + \gamma^{-1} d_Z + 2}}.
\end{align*}
\end{theorem}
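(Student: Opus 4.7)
The plan is to lower bound the minimax risk by constructing a rich packing of hypotheses inside $\mathcal{P}_{\beta,\gamma}$ and applying Assouad's lemma (equivalently, Varshamov--Gilbert combined with Fano). Throughout I take the marginal $p_Z \equiv 1$ on $[0,1]^{d_Z}$, which trivially satisfies $p_Z \geq c$ and reduces the weighted loss in \eqref{loss:function:def} to the ordinary $L_1$ distance between joint densities.

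\textbf{Construction.} Pick bandwidths $h_X, h_Z > 0$, to be optimized later. Partition $[0,1]^{d_X}$ into $M_X = h_X^{-d_X}$ cubes of side $h_X$ and $[0,1]^{d_Z}$ into $M_Z = h_Z^{-d_Z}$ cubes of side $h_Z$; denote their centers by $x_k$ and $z_j$ and let $M = M_X M_Z$. Fix smooth, compactly supported bumps $\phi_0 : \mathbb{R}^{d_Z} \to \mathbb{R}_+$ and $\psi_0 : \mathbb{R}^{d_X} \to \mathbb{R}$ with $\int \psi_0 = 0$ and all derivatives bounded. For each $\sigma \in \{-1,+1\}^M$ set
\begin{align*}
p_\sigma(x,z) = 1 + \delta\, h_X^\beta \sum_{j,k} \sigma_{j,k}\, \phi_0\!\Bigl(\tfrac{z - z_j}{h_Z}\Bigr)\psi_0\!\Bigl(\tfrac{x - x_k}{h_X}\Bigr).
\end{align*}
Because $\int \psi_0 = 0$, the induced $Z$-marginal is identically $1$; because the bump supports are disjoint across $(j,k)$, $p_\sigma \geq 0$ for $\delta$ bounded by a constant, so $p_\sigma$ is a valid joint density.

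\textbf{Regularity.} A routine scaling calculation shows that the H\"older-$\beta$ constant of $p_\sigma(\cdot|z)$ in $x$ is $O(\delta)$, independent of $h_X, h_Z$. For $\gamma$-TV smoothness in $z$, the disjointness of the $x$-supports gives, for $z, z'$ in the same $Z$-cube,
\begin{align*}
\|p_\sigma(\cdot|z) - p_\sigma(\cdot|z')\|_1 \lesssim \delta h_X^\beta h_Z^{-1}\|z-z'\|_1 \lesssim \delta h_X^\beta h_Z^{-\gamma}\|z-z'\|_1^\gamma,
\end{align*}
using $\gamma \leq 1$ and $\|z-z'\|_1 \leq h_Z$; the boundary case of $z, z'$ in different cubes is handled by triangle inequality through an intermediate point. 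Hence $p_\sigma \in \mathcal{P}_{\beta,\gamma}$ provided $\delta \lesssim 1$ and $\delta h_X^\beta \lesssim h_Z^\gamma$.

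\textbf{Separation, information, and Assouad.} Hypotheses differing in a single bit $(j_0,k_0)$ satisfy
\begin{align*}
\|p_\sigma - p_{\sigma'}\|_1 \asymp \delta h_X^{\beta+d_X}h_Z^{d_Z}, \qquad \chi^2(p_\sigma, p_{\sigma'}) \lesssim (\delta h_X^\beta)^2 h_X^{d_X}h_Z^{d_Z}.
\end{align*}
Since bump supports are disjoint, any estimator $\hat p$ defines a coordinatewise test $\hat\sigma$ with $\|\hat p - p_\sigma\|_1 \gtrsim \delta h_X^{\beta+d_X}h_Z^{d_Z}\, d_H(\hat\sigma,\sigma)$. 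Assouad's lemma then yields
\begin{align*}
\inf_{\hat p}\sup_\sigma \mathbb{E}_{p_\sigma}\|\hat p - p_\sigma\|_1 \;\gtrsim\; M \cdot \delta h_X^{\beta+d_X}h_Z^{d_Z}\cdot\bigl(1 - \sqrt{n\chi^2/2}\bigr) \;\asymp\; \delta h_X^\beta,
\end{align*}
provided $n\chi^2 = O(1)$. Saturating both $\delta \asymp 1$ and $\delta h_X^\beta \asymp h_Z^\gamma$ forces $h_X^\beta \asymp h_Z^\gamma$, after which the Assouad condition becomes $n\, h_Z^{2\gamma+d_Z}h_X^{d_X} \lesssim 1$. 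Solving yields $h_Z^\gamma \asymp n^{-1/(d_X/\beta + d_Z/\gamma + 2)}$, matching the claimed rate.

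\textbf{Main obstacle.} The most delicate step is verifying $\gamma$-TV smoothness across $Z$-cube boundaries (not merely within a single cube) and ensuring that the disjoint-support structure translates the per-bit $L_1$ separation into a bit-recovery lower bound with no loss of a factor of $M$ in Assouad's inequality. The tuning of the three constraints (positivity, TV smoothness, and KL bound) must also be carried out carefully; aside from these considerations, the argument reduces to standard bump-perturbation scaling.
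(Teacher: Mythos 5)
Your proposal is correct and conceptually parallel to the paper's: both perturb the uniform conditional by disjointly-supported products of $x$-bumps and $z$-bumps, verify positivity and the H\"older and $\gamma$-TV smoothness conditions via the standard $\min(1,u)\le u^{\gamma}$ scaling, and balance the positivity, TV-smoothness, and information constraints to extract the exponent. The genuine departure is the information-theoretic tool: the paper builds a Varshamov--Gilbert packing of the hypercube, bounds the KL divergence between the perturbed joints, and invokes Fano's inequality, whereas you run Assouad's lemma over the full hypercube $\{\pm 1\}^{M}$ with a per-bit $\chi^2$ bound. Both yield the same rate; Assouad avoids the packing lemma at the cost of needing the per-bit $L_1$ error to accumulate linearly, which is automatic here because the bump supports are disjoint. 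Two minor remarks: taking $p_Z\equiv 1$ is a valid simplification (the paper keeps a general $p_Z\ge p_{\min}$, used only in the separation lower bound), and your choice of a nonnegative $z$-bump $\phi_0$ versus the paper's mean-zero $z$-bump is immaterial since $\int\psi_0=0$ is what forces the $Z$-marginal to stay uniform. Both obstacles you flag are routine: the cross-boundary TV check follows by passing through the cube boundary (where the $z$-bump vanishes) and using subadditivity of $t\mapsto t^{\gamma}$ for $\gamma\le 1$, and the coordinatewise Assouad decoding is the standard argument once bump supports are disjoint.
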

{\noindent \bf Remarks: } 
\begin{enumerate}
\item Theorem \ref{upper:bound:holder:thm} and this theorem together show that the proposed estimate \eqref{holder:estimate} achieves the minimax rate $n^{\frac{-1}{\beta^{-1} d_X + \gamma^{-1} d_Z + 2}}$ under the loss function \eqref{loss:function:def}. 
\item It is worth comparing and contrasting the results of this Theorem with our earlier impossibility result in Theorem~\ref{impossible:thm}. In rough terms, they convey the same basic intuition that when $\gamma$ is very small (or 0)
conditional density estimation is difficult (or impossible) in a minimax sense. This result is more quantitative, capturing in a more precise sense the dependence on $\gamma$. On the other hand, the result of Theorem~\ref{impossible:thm} is more
flexible, allowing essentially any marginal density $p_Z$ (not requiring it to be lower bounded by a constant), as long as the marginal density has some non-trivial absolutely continuous component (as discussed in the remarks following  Theorem~\ref{impossible:thm}).
\end{enumerate}
Here we provide a sketch of the proof for the minimax optimal lower bound. The full proof is deferred to Section~\ref{sec:lower}. 
\begin{proof}[Proof Sketch] 
We first define a class of conditional density functions
and show that their joint distributions indeed belong to $\mathcal{P}_{\beta, \gamma}$. Then, we apply Fano's inequality to derive the minimax lower bound.

The conditional density functions are defined as
\begin{align*}
p^{\Delta}_{X|Z}(x|z) & = 1 + \sum_{\bar{i} \in [m]^{d_X}} \sum_{\bar{j} \in [m]^{d_Z}} \Delta_{\bar{i}, \bar{j}} \prod_{k \in [d_X]} h_{i_k}(x_k) \prod_{k \in [d_Z]} g_{j_k}(z_k),
\end{align*}
where recall the shorthands $\bar{i} \in [r]^{d_X}, \bar{j} \in [m]^{d_Z}$ ($r, m$ are integers chosen later), and $\Delta_{\bar{i}, \bar{j}} \in \{ \pm 1 \}$. The intuition for such a construction is to add multiple small perturbations to the uniform conditional density function by using infinitely differentiable bump functions $h_{i_k}, g_{j_k}$. We proceed to verify that the constructed conditional density functions are indeed density functions (i.e. always positive and integrates to 1), and follow both the $\gamma$-TV smoothness condition as in Definition \ref{TV:smooth:def} and the H\"older smoothness condition as in Definition \ref{holder:smooth:def}.

In order to apply Fano's inequality \cite{Yu1997fano}, we first show that there exists a subset of the conditional density functions defined above, such that the distance between any pair as measured by the loss function is sufficiently large (more specifically, is lower bounded by some $\epsilon$). This is done by using Varshamov-Gilbert's construction \cite[Lemma 2.9]{tsybakov09introduction}. We then find an upper bound on the Kullback-Leibler (KL) divergence between any pair of our conditional density functions. Finally, applying Markov's inequality to the Fano's inequality allows us to express the minimax lower bound in terms of the distance lower bound and KL divergence upper bound we just derived. Making some optimal selection of parameter values completes the proof and produces the desired matching minimax optimal lower bound.
\end{proof}

Importantly, note that in the process of proving Theorem \ref{lower:bound:holder:thm}, we constructed a class of conditional density functions and showed that it satisfies all our assumptions. 
In order to better understand the class $\mathcal{P}_{\beta, \gamma}$, we develop further examples of densities belonging to this class in the next section.

\section{Examples} \label{examples:section}
In this section we provide examples of distributions which belong to $\mathcal{P}_{\beta, \gamma}$. Recall that this class of distributions requires conditional densities to be H\"older smooth (see Definition \ref{holder:smooth:def}) and $\gamma$-TV smooth (see Definition \ref{TV:smooth:def}). We already saw examples of such distributions in the proof of Theorem \ref{lower:bound:holder:thm}. Below we give two additional classes of examples for different values of the smoothness $\beta$.

\begin{theorem}[Examples for  $\beta > 1$]\label{example:greater:thm}
Suppose $g(x, z): [0,1]^d \mapsto \mathbb{R}$ is such that $g(x, z) \geq a > 0$ for some constant $a$, and is H\"older smooth with smoothness $\beta > 1$ in both $x$ and $z$, i.e.,
\begin{align*}
\sup_{\alpha}  | D^{\alpha} g(x,z) - D^{\alpha} g(x',z')| \leq C ( \| x - x'\|_1 + \| z - z'\|_1 )^{\beta - \ell},
\end{align*}
for all $\alpha$ such that $\|\alpha\|_1 = \ell, \alpha \in \mathbb{N}_{0}^{d_X}$, where $\ell = \floor{\beta}$. Then if $p_{X|Z}(x|z) = \frac{g(x,z)}{\int g(x,z) d x}$, we have $p_{X|Z} \in \mathcal{P}_{\beta, 1} \subseteq \mathcal{P}_{\beta, \gamma}$, for any $\gamma \leq 1$.
\end{theorem}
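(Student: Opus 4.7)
The plan is to directly verify the two defining properties of $\mathcal{P}_{\beta,1}$: H\"older smoothness in $x$ (Definition~\ref{holder:smooth:def}) and $1$-TV smoothness in $z$ (Definition~\ref{TV:smooth:def}). The inclusion $\mathcal{P}_{\beta,1} \subseteq \mathcal{P}_{\beta,\gamma}$ for $\gamma \le 1$ will then follow from a simple boundedness argument.

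Write $N(z) \eqdef \int g(x,z)\,dx$, so that $p_{X|Z}(x|z) = g(x,z)/N(z)$. Since $g \ge a > 0$ and $g$ is H\"older smooth on the compact set $[0,1]^d$, $g$ is also bounded above by some $M < \infty$; hence $a \le N(z) \le M$ for every $z \in [0,1]^{d_Z}$. For the first property, observe that $N(z)$ depends only on $z$, so for any multi-index $\alpha \in \mathbb{N}_0^{d_X}$ of order $\ell = \lfloor \beta \rfloor$ (taken in $x$) one has $D^\alpha p_{X|Z}(x|z) = D^\alpha g(x,z)/N(z)$. Plugging $z' = z$ into the joint H\"older bound on $g$ yields
\begin{align*}
|D^\alpha p_{X|Z}(x|z) - D^\alpha p_{X|Z}(x'|z)| \;\le\; \frac{C}{a}\|x-x'\|_1^{\beta-\ell},
\end{align*}
which is precisely the required H\"older smoothness in $x$ with constant $W_1 = C/a$.

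For $1$-TV smoothness in $z$, I would split the integrand using the triangle inequality
\begin{align*}
\left|\frac{g(x,z)}{N(z)} - \frac{g(x,z')}{N(z')}\right|
\;\le\; \frac{|g(x,z) - g(x,z')|}{N(z)} + g(x,z')\,\frac{|N(z') - N(z)|}{N(z)\,N(z')},
\end{align*}
and integrate over $x \in [0,1]^{d_X}$. Because $\beta > 1$, all first-order partial derivatives of $g$ exist and are continuous on $[0,1]^d$, hence bounded; so $g$ is Lipschitz in $z$ with some constant $L$, and the first term integrates to at most $L\|z-z'\|_1/a$. For the second term, $|N(z)-N(z')| \le \int |g(x,z)-g(x,z')|\,dx \le L\|z-z'\|_1$ and $\int g(x,z')\,dx = N(z')$, so this piece is also at most $L\|z-z'\|_1/a$. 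Combining, $\|p_{X|Z=z} - p_{X|Z=z'}\|_1 \le (2L/a)\|z-z'\|_1$, giving $1$-TV smoothness with $W_2 = 2L/a$.

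Finally, to deduce $\mathcal{P}_{\beta,1} \subseteq \mathcal{P}_{\beta,\gamma}$ for $\gamma \le 1$, I would split into two cases: when $\|z-z'\|_1 \le 1$, the bound $\|z-z'\|_1 \le \|z-z'\|_1^\gamma$ gives the inequality directly with the same constant; when $\|z-z'\|_1 > 1$, the trivial $\|p_{X|Z=z}-p_{X|Z=z'}\|_1 \le 2 \le 2\|z-z'\|_1^\gamma$ closes the gap. Since the definition permits $W_2$ to be any sufficiently large constant, we conclude $p_{X|Z} \in \mathcal{P}_{\beta,\gamma}$. I do not foresee a major obstacle; the only mild subtlety is handling the ratio structure in the TV bound, which is resolved cleanly by the triangle-inequality split above and the uniform lower bound $N(z) \ge a$.
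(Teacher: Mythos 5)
Your proof is correct, and while the Hölder-smoothness-in-$x$ step matches the paper's (both note that the normalizer $N(z) = \int g(x,z)\,dx$ is $x$-free and bounded below by $a$, so $D^\alpha p_{X|Z} = D^\alpha g / N(z)$ inherits the Hölder bound with constant $C/a$), your TV-smoothness argument takes a genuinely different route. The paper differentiates the ratio $g/N$ directly in $z_i$ via the quotient rule together with the Leibniz rule to interchange $\partial_{z_i}$ and $\int\!dx$, bounds numerator and denominator separately on the compact domain, and deduces a uniform derivative bound $K$, hence Lipschitz continuity of $p_{X|Z}$ in $z$, which it then integrates over $x$. You instead algebraically split
\begin{align*}
\frac{g(x,z)}{N(z)} - \frac{g(x,z')}{N(z')} = \frac{g(x,z)-g(x,z')}{N(z)} + g(x,z')\,\frac{N(z')-N(z)}{N(z)N(z')},
\end{align*}
and integrate each piece in $x$ using only the Lipschitz bound on $g$ (which, as you correctly observe, follows from $\beta > 1$ and compactness) and $N \ge a$. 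This avoids any explicit differentiation of the ratio and any appeal to the Leibniz rule, making it somewhat more elementary; the paper's route gives a pointwise derivative bound (a slightly stronger intermediate fact) but at the cost of the quotient-rule bookkeeping. Both reach the same final bound of the form $W_2\|z-z'\|_1$. You also supply the case split justifying $\mathcal{P}_{\beta,1}\subseteq\mathcal{P}_{\beta,\gamma}$, which the paper asserts without comment; that is a small but welcome addition.
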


\begin{theorem}[Examples for $\beta \leq 1$]\label{example:less:thm}
Suppose $g(x,z): [0,1]^d \mapsto [-M,M]$ is a bounded function such that
\begin{align*}
|g(x,z) - g(x',z')| \leq C (\|x - x'\|_1^{\beta} + \|z - z'\|^\gamma_1). 
\end{align*}
Then if $p_{X|Z}(x|z) = \frac{\exp(g(x,z))}{\int \exp(g(x,z)) d x},$ we have $p_{X,Z} \in \mathcal{P}_{\beta, \gamma}$.
\end{theorem}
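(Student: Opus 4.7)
The plan is to verify directly from the formula $p_{X|Z}(x|z) = \exp(g(x,z))/Z(z)$, where $Z(z) := \int_{[0,1]^{d_X}} \exp(g(x,z))\,dx$, that (a) for each fixed $z$ the map $x \mapsto p_{X|Z}(x|z)$ is $\beta$-H\"older smooth in the sense of Definition~\ref{holder:smooth:def}, and (b) the family $\{p_{X|Z=z}\}$ is $\gamma$-TV smooth in the sense of Definition~\ref{TV:smooth:def}. The key elementary facts I will repeatedly use are that $|g| \leq M$ implies $e^{-M} \leq Z(z) \leq e^{M}$, and the Lipschitz bound $|e^a - e^b| \leq e^M |a-b|$ for $a,b \in [-M,M]$, obtained from the mean value theorem.

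For the H\"older condition, since $\beta \leq 1$ we have $\ell = \lfloor \beta \rfloor = 0$, so only the function value itself must be $\beta$-H\"older. Fixing $z$ and applying the Lipschitz bound on $\exp$ together with the assumed H\"older property of $g$ in $x$ yields
\begin{align*}
|\exp(g(x,z)) - \exp(g(x',z))| \leq e^M C \|x - x'\|_1^{\beta}.
\end{align*}
Dividing by $Z(z) \geq e^{-M}$ gives $|p_{X|Z}(x|z) - p_{X|Z}(x'|z)| \leq C e^{2M} \|x-x'\|_1^{\beta}$, which is exactly the desired H\"older smoothness with $W_1 = Ce^{2M}$.

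For the TV smoothness in $z$, I will use the decomposition
\begin{align*}
p_{X|Z}(x|z) - p_{X|Z}(x|z') = \frac{\exp(g(x,z)) - \exp(g(x,z'))}{Z(z)} + \exp(g(x,z'))\left(\frac{1}{Z(z)} - \frac{1}{Z(z')}\right).
\end{align*}
The first term is bounded in absolute value by $Ce^{2M}\|z-z'\|_1^{\gamma}$ via the same Lipschitz-of-$\exp$ plus H\"older-of-$g$ argument. For the second term, I first bound $|Z(z) - Z(z')| \leq \int e^M C \|z-z'\|_1^\gamma\, dx = Ce^M \|z-z'\|_1^\gamma$, hence $|1/Z(z) - 1/Z(z')| \leq |Z(z)-Z(z')|/(Z(z)Z(z')) \leq C e^{3M}\|z-z'\|_1^\gamma$, and multiplying by $\exp(g(x,z')) \leq e^M$ yields $Ce^{4M}\|z-z'\|_1^\gamma$. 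Integrating both pieces over $x \in [0,1]^{d_X}$ (whose Lebesgue measure is $1$, so the right-hand side is unchanged) gives $\|p_{X|Z=z} - p_{X|Z=z'}\|_1 \leq (Ce^{2M}+Ce^{4M})\|z-z'\|_1^\gamma$, i.e., $\gamma$-TV smoothness with $W_2 = C(e^{2M}+e^{4M})$.

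There is no real obstacle in this argument; the only mild subtlety is to pick the decomposition of $p_{X|Z}(x|z) - p_{X|Z}(x|z')$ that isolates the numerator and the normalizer changes so that each piece can be controlled by the same two tools (Lipschitz of $\exp$ on a bounded interval and the lower/upper bounds on $Z(z)$). All constants are absorbed into the sufficiently large $W_1$ and $W_2$ required by Definitions~\ref{holder:smooth:def} and~\ref{TV:smooth:def}, completing the membership $p_{X,Z} \in \mathcal{P}_{\beta,\gamma}$.
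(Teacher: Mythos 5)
Your proof is correct, and for the TV part it takes a genuinely different route from the paper. The Hölder-in-$x$ argument is essentially the same in both (you use the mean-value-theorem Lipschitz bound $|e^a-e^b|\le e^M|a-b|$ directly, whereas the paper re-derives that same Lipschitz estimate by a power-series expansion of $e^g - e^{g'}$ — same outcome). For the $\gamma$-TV smoothness in $z$, however, the two arguments are structurally distinct. You isolate the numerator change and the normalizer change via the algebraic splitting
\begin{align*}
p_{X|Z}(x|z) - p_{X|Z}(x|z') = \frac{e^{g(x,z)} - e^{g(x,z')}}{Z(z)} + e^{g(x,z')}\Bigl(\tfrac{1}{Z(z)} - \tfrac{1}{Z(z')}\Bigr),
\end{align*}
control each term pointwise in $x$ using only the two-sided bound $e^{-M}\le Z(z)\le e^M$ and the Lipschitz bound on $\exp$, and integrate; the fact that $[0,1]^{d_X}$ has unit Lebesgue measure lets the pointwise constant pass to the $L_1$ bound unchanged. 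The paper instead writes $\|p_{X|Z=z}-p_{X|Z=z'}\|_1 = \int \bigl(\max/\min - 1\bigr)\min \le \int(\max/\min-1)p_{X|Z}(x|z)\,dx$, reduces to showing $\log p_{X|Z}(x|z)$ is $\gamma$-Hölder in $z$ (which needs a Jensen step to handle the $\log \tfrac{Z(z)}{Z(z')}$ term), and then exponentiates. Your route is more elementary and arguably shorter; the paper's route keeps everything multiplicative, which can give a bound expressed purely through the Hölder constant of $\log p_{X|Z}$ and could be preferable in settings where one controls log-density ratios rather than density differences. Both land on a valid $W_2$ depending on $C$, $M$, and $d_Z$, so either suffices to establish $p_{X,Z} \in \mathcal{P}_{\beta,\gamma}$.
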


The proofs of Theorem \ref{example:greater:thm} and \ref{example:less:thm} are given in Appendix \ref{app:example}.

\section{Hyperparameter Tuning and Selection}\label{yatracos:section}
We have shown that our proposed estimate \eqref{holder:estimate} achieves the minimax-optimal rate $n^{\frac{-1}{\beta^{-1} d_X + \gamma^{-1} d_Z + 2}}$ under the loss function \eqref{loss:function:def}. 
However, in doing so we manually picked the values of the hyperparameters $h$ and $m$, which depend on the smoothness parameters $\beta$ and $\gamma$ of the true distribution. Here we introduce an adaptive method of selecting the hyperparameters without needing to assume the knowledge of $\beta$ and $\gamma$.

Towards the goal of hyperparameter tuning we first design and analyze a selection procedure for conditional density estimation which satisfies a type of oracle inequality with respect to the loss~\eqref{loss:function:def}. Given a collection of candidate conditional density estimates we devise a procedure which selects one which has nearly minimal loss. Our procedure is inspired by a minimum distance estimate described in the work of Yatracos~\cite{yatracos1985rates}, and further developed in the works~\cite{devroye2012combinatorial,devroye1985nonparametric}.
However, in contrast to these works our selection procedure only has access to conditional density estimates (as opposed to joint density estimates), and we aim to design a selection procedure tailored to the loss~\eqref{loss:function:def} (as opposed to the usual $L_1$ loss on the joint densities). Furthermore, our goal is to avoid smoothness assumptions which would be required to estimate the marginal of $Z$, and this 
necessitates careful modifications of the minimum distance procedure. 

We describe our oracle inequality in Theorem~\ref{thm:yatracos} and use this result to develop an adaptive conditional density estimate which achieves the same minimax-optimal rates as the estimate in~\eqref{holder:estimate} without knowledge of the smoothness parameters in Section~\ref{sec:tuning}.

\subsection{A Modified Selection Procedure for Conditional Density Estimates}
\label{sec:yatracos}

To begin with we consider the following setup. We are given access to a 
collection of conditional density estimates $\widehat{f}_1,\ldots,\widehat{f}_N$, which are either fixed, or estimated on a separate sample. Our goal is to select an estimate of 
(nearly) minimal loss.

We associate each estimate with an oracle joint distribution $\widetilde{f}_j(x,z) = \widehat{f}_j(x|z) p_Z(z)$ where $p$ is the unknown true density of the samples. 
Associated with each pair $(i,j)$ of density estimates we define the so-called Yatracos set:
\begin{align*}
A_{ij} = \{(x,z): \widetilde{f}_i(x,z) > \widetilde{f}_j(x,z)\}.
\end{align*}
We note that we can compute $A_{ij}$ even without access to the unknown density $p$. Denote the collection of such sets $\mathcal{A}$. 
For a set $A$ we let $A^z$ denote the subset with $Z = z$.

Given $n$ samples $\{(X_1,Z_1),\ldots,(X_n,Z_n)\}$  from $p$, 
we use the following minimum distance estimator:
\begin{align*}
\psi = \argmin_{\widehat{f}_j: j \in [N]} \sup_{A \in \mathcal{A}} \left| \frac{1}{n} \sum_{i=1}^n \int_{A^{Z_i}} \widehat{f}(x | Z_i) dx - \mathbb{P}_n(A) \right|.
\end{align*}
In rough terms our selection procedure compares, for each candidate $\widehat{f}_j$, an estimate of the mass of the Yatracos sets under $\widetilde{f}_j$ to an estimate of the population mass of these sets, selecting the candidate for which the largest discrepancy is smallest. 

We show the following result:
\begin{theorem}
\label{thm:yatracos}
With probability at least $1 - \delta$, 
\begin{align*}
\int_z \|\psi(x|z) - p(x|z)\|_1 p_Z(z) dz \leq 3 \min_{j} \int_z \|\widehat{f}_{j} - p(x|z)\|_1 p_Z(z) dz+  14\sqrt{ \frac{\log (N/\delta)}{n}}.
\end{align*}
\end{theorem}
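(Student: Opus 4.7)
The plan is to mimic the classical Yatracos minimum distance argument, adapted in two ways. First, I would work with the oracle joint densities $\widetilde{f}_j(x,z) = \widehat{f}_j(x|z) p_Z(z)$, since then the loss in the statement equals $\|\widetilde{f}_j - p_{X,Z}\|_1$ and each $\widetilde{f}_j$ is a proper probability density. Second, since $p_Z$ is unknown, $\widetilde{f}_j(A) = \int_A \widehat{f}_j(x|z) p_Z(z)\, dx\, dz$ cannot be computed; this motivates the unbiased surrogate $P_n^{\widetilde{f}_j}(A) := \frac{1}{n}\sum_i \int_{A^{Z_i}} \widehat{f}_j(x|Z_i)\,dx$ used in the definition of $\psi$. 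I would record the Scheff\'e-type identity $\|\widetilde{f}_i - \widetilde{f}_j\|_1 = 2(\widetilde{f}_i(A_{ij}) - \widetilde{f}_j(A_{ij}))$ and $\sup_{A \in \mathcal{A}} |\widetilde{f}_j(A) - p(A)| \leq \tfrac{1}{2}\|\widetilde{f}_j - p\|_1$, which will glue the sup-over-Yatracos-sets quantity to the actual loss.

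Next I would handle concentration. The crucial observation is that for fixed $j$ and $A \in \mathcal{A}$,
\begin{equation*}
P_n^{\widetilde{f}_j}(A) - \mathbb{P}_n(A) \;=\; \frac{1}{n}\sum_{i=1}^n \left[\int_{A^{Z_i}} \widehat{f}_j(x|Z_i)\,dx - \mathbbm{1}((X_i,Z_i) \in A)\right]
\end{equation*}
is an average of i.i.d. random variables bounded in $[-1,1]$ (using independence of $\widehat{f}_j$ from $\mathcal{D}_n$, which is ensured by the hypothesis) whose expectation is $\widetilde{f}_j(A) - p(A)$. A single Hoeffding bound, combined with a union bound over the at most $N^3$ pairs $(j, A)$ with $A \in \mathcal{A}$, yields that with probability at least $1-\delta$
\begin{equation*}
E \;:=\; \sup_{j \in [N],\, A \in \mathcal{A}} \bigl| (P_n^{\widetilde{f}_j}(A) - \mathbb{P}_n(A)) - (\widetilde{f}_j(A) - p(A)) \bigr| \;\leq\; \sqrt{\tfrac{2 \log(2N^3/\delta)}{n}},
\end{equation*}
and after slight rearrangement $4E \leq 14 \sqrt{\log(N/\delta)/n}$, giving the second term in the oracle inequality.

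The Yatracos comparison step is then clean. Fix any $j \in [N]$ and let $A^* := A_{\psi, j} \in \mathcal{A}$. The bound just above gives $\widehat{T}(\widehat{f}_j) \leq \sup_{A\in\mathcal{A}} |\widetilde{f}_j(A) - p(A)| + E \leq \tfrac{1}{2}\|\widetilde{f}_j - p\|_1 + E$, and separately $|\widetilde{\psi}(A^*) - p(A^*)| \leq |P_n^{\widetilde{\psi}}(A^*) - \mathbb{P}_n(A^*)| + E \leq \widehat{T}(\psi) + E$. Invoking the optimality $\widehat{T}(\psi) \leq \widehat{T}(\widehat{f}_j)$ chains these into
\begin{equation*}
|\widetilde{\psi}(A^*) - p(A^*)| \;\leq\; \tfrac{1}{2}\|\widetilde{f}_j - p\|_1 + 2E.
\end{equation*}
Triangle inequality plus the Yatracos identity $\|\widetilde{\psi} - \widetilde{f}_j\|_1 = 2|\widetilde{\psi}(A^*) - \widetilde{f}_j(A^*)|$ then give $\|\widetilde{\psi} - \widetilde{f}_j\|_1 \leq 2\|\widetilde{f}_j - p\|_1 + 4E$, and a final triangle inequality yields $\|\widetilde{\psi} - p\|_1 \leq 3\|\widetilde{f}_j - p\|_1 + 4E$. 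Minimizing over $j$ and substituting $4E$ concludes the proof, since the left-hand side is exactly $\int \|\psi(\cdot|z) - p(\cdot|z)\|_1 p_Z(z)\, dz$.

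The main obstacle is the concentration step: the natural decomposition into $\sup_A|P_n^{\widetilde{f}_j}(A) - \widetilde{f}_j(A)|$ and $\sup_A|\mathbb{P}_n(A) - p(A)|$ works but inflates the constants unnecessarily, whereas treating $P_n^{\widetilde{f}_j}(A) - \mathbb{P}_n(A)$ as a single empirical-process increment (both quantities are built from the same $n$ samples) is what allows the leading constant to be pushed down to $14$. The other subtle point is verifying that the summands in the display above are genuinely bounded in $[-1,1]$ and i.i.d.; boundedness follows because $\int_{A^{z}} \widehat{f}_j(x|z)\,dx \in [0,1]$ for any conditional density $\widehat{f}_j$, and the i.i.d. assumption on $\mathcal{D}_n$ transfers to the summands once the $\widehat{f}_j$ are independent of $\mathcal{D}_n$.
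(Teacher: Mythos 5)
Your proof is correct, and it takes a mildly different path from the paper's. The paper's proof introduces two separate error quantities, $\Delta_1 = \sup_{A \in \mathcal{A}}|\mathbb{P}_n(A) - p(A)|$ and $\Delta_2 = \sup_{A,j}|\widetilde{f}_j(A) - \tfrac{1}{n}\sum_i \int_{A^{Z_i}}\widehat{f}_j(x|Z_i)dx|$, bounds each via Hoeffding plus a union bound, and then runs a triangle-inequality chain through the oracle minimizer $\widecheck{f}$, obtaining $\int\|\psi - p\|_1 p_Z \leq 3\int\|\widecheck{f}-p\|_1 p_Z + 4\Delta_1 + 8\Delta_2$. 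You instead observe that $P_n^{\widetilde f_j}(A) - \mathbb{P}_n(A)$ is itself an average of i.i.d.\ summands in $[-1,1]$ with mean $\widetilde f_j(A) - p(A)$, so you bound its centered deviation $E$ directly; this coupled view avoids paying separately for the deviations of $\mathbb{P}_n$ and $P_n^{\widetilde f_j}$ from their respective means, and the subsequent comparison step is the textbook Yatracos chain through the minimality of $\widehat T(\psi)$ and the Scheff\'e identity applied to $A^* = A_{\psi,j}$. The two arguments buy essentially the same thing: yours produces $3\min_j(\cdot) + 4E$ with $E \lesssim \sqrt{\log(N^3/\delta)/n}$, the paper's produces the $4\Delta_1 + 8\Delta_2$ combination; after union bounding over $\mathcal{O}(N^3)$ events both land within the claimed constant of $14$ (modulo minor sloppiness on both sides, since neither tracks the Hoeffding range and union-bound cardinalities with exact care). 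One small thing worth saying explicitly, which you flag and then handle correctly: the summands must be functions of $(X_i,Z_i)$ alone, which requires $\widehat f_j$ to be deterministic conditional on the selection sample -- this is precisely why the paper sample-splits before invoking the theorem.
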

{\noindent \bf Remarks:}
\begin{enumerate}
\item Our method and analysis are inspired by those of Yatracos \cite{yatracos1985rates}. The crucial insight of Yatracos is that when our goal is to select one of a collection of candidates, the supremum over the relatively small collection of Yatracos sets is adequate as a (statistically and computationally) tractable proxy for the supremum over all measurable sets (the TV/$L_1$ distance). 
\item The guarantee of the theorem is extraordinary in that we are able to obtain an oracle inequality with an excess error which scales as $1/\sqrt{n}$, despite the fact that accurately estimating the loss~\eqref{loss:function:def} of even a single estimate would require many more samples. Furthermore, the guarantee degrades only logarithmically in the number of estimates $N$ we are aiming to select from and in the failure probability $\delta$. This, for instance, will be important in the next section when we use the method to select from a large collection of candidate density estimates constructed using different values of the tuning parameter.
\item Although not the main focus of our paper, the computational costs of constructing the Yatracos sets and computing the minimum distance estimate are discussed extensively in~\cite{devroye2012combinatorial,devroye1985nonparametric}. At the expense of a slightly worse guarantee one might use a tournament-based selection rule which has a computational cost which scales linearly (as opposed to quadratically) in the number of estimates $N$.
\end{enumerate}

\subsection{Adaptive Conditional Density Estimation}
\label{sec:tuning}
With our previous result in place we now describe an adaptive conditional density estimate which achieves the rate $n^{\frac{-1}{\beta^{-1} d_X + \gamma^{-1} d_Z + 2}}$, without knowledge of the smoothness parameters
$\beta$ and $\gamma$. We assume throughout that $\beta$ and $\gamma$ are upper bounded by some (unknown, but fixed) universal constants. We split our sample in two halves, using one half to construct a collection of candidate density estimates, and the second half to select one of these candidates following the procedure in Section~\ref{sec:yatracos}.
In practice, one might choose instead to use cross-fitting, where we repeat this procedure swapping the roles of the two samples, and average the two resulting estimates. It is straightforward to show that our guarantees
continue to hold for the cross-fit variant as well.

We postulate two intervals where $h$ and $m$ are assumed to lie in respectively. 
Recall our choice of optimal hyperparameter values while proving Theorem \ref{upper:bound:holder:thm}: $h \asymp n^{\frac{-1}{d_X + d_Z \beta \gamma^{-1} + 2\beta}}$ and $m \asymp n^{\frac{\beta}{d_X + d_Z \beta \gamma^{-1} + 2\beta}}$. Based on this result we consider values of the tuning parameters in two sets $\mathcal{I}_1 = \{n^{-1/d_X}, 2 \times n^{-1/d_X}, \ldots, 2^{\lceil \log_2 n^{1/d_X}\rceil} n^{-1/d_X}\}$, and $\mathcal{I}_2 = \{1, 2, 4, \ldots, 2^{\lceil (\log_2 n)/2 \rceil} \}$. We consider all pairs of tuning parameters $(h, m) \in \mathcal{I}_1 \times \mathcal{I}_2$, noting that there are at most $N := \mathcal{O}(\log^{2} n)$ such choices. 
For 
each possible hyperparameter combination, we compute our conditional density estimate $\bar p_{j}  (x | z)$, for $j \in [N]$ (we use the truncated and renormalized estimate analyzed in Lemma~\ref{lemma:ptilde:density}). 
At least one of these estimates achieves the minimax rate of $n^{\frac{-1}{\beta^{-1} d_X + \gamma^{-1} d_Z + 2}}$ and it thus only remains to select a sufficiently good candidate.

We apply the Yatracos procedure from Section~\ref{sec:yatracos} to select a candidate $\widehat{\psi}$ using the second half of the sample, and we obtain the following result. 
\begin{theorem}[Adaptive Conditional Density Estimation] \label{thm:adaptive}
Suppose that $p_{X,Z} \in \mathcal{P}_{\beta, \gamma}$. The tuning-parameter free procedure described above yields an estimate $\widehat{\psi}$ such that,
\begin{align*}
\mathbb{E} \int \int |\hat \psi_{X|Z}(x |z) - p_{X|Z}(x |z)| p_{Z}(z) d x d z \lesssim n^{\frac{-1}{\beta^{-1} d_X + \gamma^{-1} d_Z + 2}}.
\end{align*}
\end{theorem}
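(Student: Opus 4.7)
The plan is to combine the oracle inequality from Theorem~\ref{thm:yatracos} with a ``coverage'' argument showing that the dyadic grid $\mathcal{I}_1 \times \mathcal{I}_2$ contains hyperparameter values within a constant factor of the (unknown) optimum. First I would split the sample into two independent halves $\mathcal{D}_1, \mathcal{D}_2$ of size $n_1 = n_2 = \lfloor n/2 \rfloor$. On $\mathcal{D}_1$, I would build all $N = O(\log^{2} n)$ candidate estimates $\bar p_j$ (using the truncated/renormalized form of Lemma~\ref{lemma:ptilde:density}, one for each $(h,m) \in \mathcal{I}_1 \times \mathcal{I}_2$), and on $\mathcal{D}_2$ I would run the Yatracos-type selector of Section~\ref{sec:yatracos}, conditionally on $\mathcal{D}_1$. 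This is legitimate because Theorem~\ref{thm:yatracos} explicitly allows the candidates to be estimated on a sample independent of the one used for selection.

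Next I would verify that some grid candidate attains the oracle rate. Let $h^\star \asymp n_1^{-1/(d_X + d_Z \beta/\gamma + 2\beta)}$ and $m^\star \asymp n_1^{\beta/(d_X + d_Z \beta/\gamma + 2\beta)}$ denote the optimal choices identified in Theorem~\ref{upper:bound:holder:thm}. Since $\beta$ and $\gamma$ are upper bounded by fixed universal constants, the optimal values satisfy $h^\star \in (n_1^{-1/d_X}, 1)$ and $m^\star \in [1, n_1^{1/2})$, so the dyadic grids $\mathcal{I}_1$ and $\mathcal{I}_2$ (for $n$ large enough) contain a pair $(h', m')$ with $h'/h^\star \in [1/2, 2]$ and $m'/m^\star \in [1/2, 2]$. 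Re-inspecting the bias/``variance'' decomposition underlying the proof of Theorem~\ref{upper:bound:holder:thm}, one sees that replacing the optimal constants by constants of the same order only inflates each term by a multiplicative factor, so the candidate $\bar p_{j^\star}$ built with $(h', m')$ (a deterministic function of $n$, not of the data) would satisfy $\mathbb{E}\bigl[\int\int |\bar p_{j^\star}(x|z) - p_{X|Z}(x|z)| p_Z(z)\, dx\, dz\bigr] \lesssim r_{n_1} \asymp r_n$.

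I would then apply Theorem~\ref{thm:yatracos} with $\delta = 1/n$, conditionally on $\mathcal{D}_1$. With probability at least $1 - 1/n$ over $\mathcal{D}_2$,
\begin{align*}
\int \int |\widehat\psi(x|z) - p_{X|Z}(x|z)|\, p_Z(z)\, dx\, dz \leq 3 \int \int |\bar p_{j^\star} - p_{X|Z}|\, p_Z\, dx\, dz + 14\sqrt{2\log(N n)/n}.
\end{align*}
On the complementary event the loss is at most $2$ since both $\widehat\psi$ and $p_{X|Z}$ are proper densities, so taking total expectation: the oracle term is $\lesssim r_n$ by the previous paragraph, the failure event contributes at most $2/n$, and the excess $\sqrt{\log(Nn)/n}$ is $o(r_n)$. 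Indeed, with $\beta,\gamma$ bounded above by universal constants, $\beta^{-1}d_X + \gamma^{-1}d_Z + 2$ is bounded below by some constant $c > 2$, so $r_n \geq n^{-1/c}$ and $\sqrt{\log(Nn)/n}\,/\,r_n \lesssim \sqrt{\log n}\cdot n^{1/c - 1/2} \to 0$.

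The main obstacle is the coverage step in the second paragraph: one must carefully re-examine the bias/``variance'' tradeoff in the proof of Theorem~\ref{upper:bound:holder:thm} to confirm that the estimator~\eqref{holder:estimate} is robust to constant-factor perturbations of $(h,m)$ around the (unknown) oracle values, so that the coarse dyadic grid suffices. Once this is in place, the rest is a relatively clean application of Theorem~\ref{thm:yatracos} together with standard arguments for passing from a high-probability bound to an expectation bound.
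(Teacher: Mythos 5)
Your proposal follows the same route as the paper: split the sample, apply Theorem~\ref{thm:yatracos} with $\delta = 1/n$ conditionally on $\mathcal{D}_1$, pass to expectation using the trivial bound of $2$ on the failure event, and absorb the $\sqrt{\log n / n}$ excess into the oracle rate since $\beta^{-1}d_X + \gamma^{-1}d_Z + 2 > 2$. The coverage argument you flag as the ``main obstacle'' (verifying that the dyadic grid $\mathcal{I}_1 \times \mathcal{I}_2$ contains a pair within a constant factor of the optimal $(h^\star, m^\star)$, and that the upper-bound proof is robust to such perturbations) is a real step that the paper treats implicitly by simply positing a near-optimal $j^* \in [N]$; your reasoning there is sound and fills a small gap in the written argument.
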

{\noindent \bf Remarks:} 
\begin{enumerate}
\item The proof of this result is fairly straightforward given the result of the previous section, and only requires us to convert the high-probability bound from Theorem~\ref{thm:yatracos} so that we may use our
previously derived upper bound in Theorem~\ref{upper:bound:holder:thm}. We present the details in Section~\ref{sec:adaptive_proof}.
\item We note that, in contrast to works on pointwise adaptation over smoothness classes \cite{lepski1997optimal} where typically a logarithmic price is unavoidable, 
we design an adaptive estimator which achieves the (oracle) minimax-rate (of Theorem~\ref{lower:bound:holder:thm}).
\end{enumerate}

\section{Proofs}
In this section we present the proofs of the main results of our paper, deferring remaining technical aspects to the Appendix. 
\subsection{Proof of Theorem~\ref{impossible:thm}}
\label{sec:impossibleproof}
We start by constructing a joint distribution $p_{X,Z}(x, z)$ such that $p_{X|Z}$ is uniform on $[0,1]^{d_X}$ for all values of $Z$ (i.e. $X$ is independent of $Z$), and the marginal of $Z$ is equal to $p_{Z}$ which is specified by the user. We will now construct multiple distributions out of $p_{X,Z}(x, z)$. Take disjoint Borel measurable sets $C_1, \ldots, C_m \subset [0,1]^{d_Z}$ such that $\int_{C_i} p_{Z}(z) dz = m^{-1}$ and $\cup_{i \in [m]}C_i = [0,1]^{d_Z}$.

For a given $\beta$ and $\holdercon$, we construct two distributions $p_1, p_2$ which are H\"older smooth with constant $\holdercon$ and smoothness $\beta$, and have sufficiently large total variation (TV) distance from the uniform distribution. In addition, their mixture distribution with equal weights produces $p_{X|Z}$, i.e. 
\begin{align*}
\frac{1}{2} p_1 + \frac{1}{2} p_2 = p_{X|Z} = U([0,1]^{d_X}).
\end{align*}
For a constant $0 < c < 1$ which we will set appropriately in the sequel, we define $p_1, p_2$ as linear functions of the $x_i$ as follows:
\begin{align*}
p_1(x) = 2(1 - c) \sum_{i \in [d_x]} \frac{x_i}{d_X} + c, \quad p_2(x) = 2 - p_1(x).
\end{align*}
The following result develops some properties of the distributions $p_1$ and $p_2$.
\begin{lemma} \label{lemma:p1p2}
\begin{enumerate}
\item The densities $p_1, p_2$ are positive, integrate to 1, and satisfy the property that $\frac{1}{2} p_1 + \frac{1}{2} p_2 = U([0,1]^{d_X})$. 
\item When $\beta \leq 1$ the densities are H\"{o}lder with $\holdercon = 2(1 - c)/d_X^\beta$ and if $\beta > 1$ they are H\"{o}lder for any value $\holdercon \geq 0$.
\item Furthermore, $\TV(U([0,1]^{d_X}), p_1) = \TV(U([0,1]^{d_X}), p_2) \geq \frac{(1 - c)^2}{18d_X} \eqdef \tvlb.$
\end{enumerate}
\end{lemma}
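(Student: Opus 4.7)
My plan is to verify the three claims in order, largely by direct calculation using the explicit linear form of $p_1$ and $p_2$. Part~1 is just algebra: since $\sum_i x_i/d_X \in [0,1]$ on the hypercube, $p_1(x) \in [c, 2-c]$, which is positive for $c \in (0,1)$; integrating term by term using $\int_0^1 x_i\, dx_i = 1/2$ gives $\int p_1\, dx = 2(1-c)\cdot \tfrac{1}{2} + c = 1$, and $p_2 = 2 - p_1$ inherits positivity (since $p_1 \le 2-c < 2$) and integrability ($\int p_2 = 2 - 1 = 1$). The mixture identity is immediate from $\tfrac{1}{2}(p_1 + p_2) = 1$.

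For Part~2, I split into two regimes. When $\beta \le 1$, we have $\ell = 0$, so the H\"{o}lder condition reduces to controlling $|p_1(x) - p_1(x')|$. By linearity,
\begin{align*}
|p_1(x) - p_1(x')| = \frac{2(1-c)}{d_X}\Bigl|\sum_i (x_i - x_i')\Bigr| \le \frac{2(1-c)}{d_X}\,\|x - x'\|_1.
\end{align*}
Since $\|x - x'\|_1 \le d_X$ on $[0,1]^{d_X}$, we write $\|x-x'\|_1 = \|x-x'\|_1^{1-\beta}\|x-x'\|_1^\beta \le d_X^{1-\beta}\|x-x'\|_1^\beta$, yielding the constant $\holdercon = 2(1-c)/d_X^\beta$; the bound for $p_2 = 2 - p_1$ is identical. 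When $\beta > 1$, then $\ell \ge 1$, and every first-order partial of $p_1$ equals the constant $2(1-c)/d_X$ while all higher partials vanish. Hence $D^\alpha p_1$ is either constant or zero for $\|\alpha\|_1 = \ell$, and the H\"{o}lder difference is identically zero, so any $\holdercon \ge 0$ works.

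For Part~3, the key identity is $p_1(x) - 1 = (1-c)(2\bar{x} - 1)$ with $\bar{x} = d_X^{-1}\sum_i x_i$, which gives
\begin{align*}
\TV(U, p_1) = \tfrac{1}{2}\int |p_1 - 1|\,dx = \tfrac{1-c}{2}\,\mathbb{E}|2\bar{X} - 1|,
\end{align*}
where $\bar X$ averages $d_X$ i.i.d.\ Uniform$[0,1]$ variables. The main (and only nontrivial) step is a lower bound on $\mathbb{E}|2\bar{X} - 1|$. I would use the elementary reverse second-moment bound: for any mean-zero $Y$ with $|Y| \le M$, $\mathbb{E} Y^2 \le M\,\mathbb{E}|Y|$, so $\mathbb{E}|Y| \ge \mathrm{Var}(Y)/M$. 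Taking $Y = 2\bar{X} - 1$ gives $M = 1$ and $\mathrm{Var}(Y) = 4\,\mathrm{Var}(\bar X) = 1/(3 d_X)$, so $\mathbb{E}|2\bar{X} - 1| \ge 1/(3 d_X)$. Plugging back yields $\TV(U, p_1) \ge (1-c)/(6 d_X)$, which (since $1-c \le 1$) is stronger than the claimed $(1-c)^2/(18 d_X)$. The same argument applies to $p_2$ by symmetry. The only delicate point is picking a clean way to lower bound $\mathbb{E}|2\bar X - 1|$ without invoking anything more than a variance computation; the second-moment trick above avoids any asymptotic or distributional input and makes the constants transparent.
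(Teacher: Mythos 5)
Your proof is correct and follows essentially the same approach as the paper's: Parts 1 and 2 are direct computations that match the paper's reasoning, and Part 3 rests on the same second-moment lower bound idea. The one notable difference is in Part 3: the paper bounds pointwise via $|y| \geq y^2/3$ using the loose bound $|y|\leq 3$ and then expands $\int y^2\,dx$ in full, whereas you factor out $(1-c)$ first, reduce to $\mathbb{E}|2\bar X - 1|$ with the tight essential supremum $\|2\bar X - 1\|_\infty = 1$, and apply $\mathbb{E}|Y| \geq \mathrm{Var}(Y)/\|Y\|_\infty$. This yields the cleaner and strictly stronger bound $\TV(U,p_1) \geq \tfrac{1-c}{6d_X}$, which implies the paper's $\tfrac{(1-c)^2}{18 d_X}$ since $1-c < 3$.
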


Now, for a $\Delta = (\Delta_1, \ldots, \Delta_m) \in \{1,2\}^m$, construct the distribution $p^{\Delta}_{X,Z}(x,z)$ which has the same marginal distribution on $Z$ as $p^{}_{X,Z}(x,z)$, i.e., $p_{Z}$, and the conditional distributions defined as follows:
for $z \in C_j$ we have
\begin{align*}
p^\Delta_{X|Z}(\cdot | z) = p_{\Delta_j}( \cdot).
\end{align*}
We know from Lemma \ref{lemma:p1p2} that $\TV(U([0,1]^{d_X}), p_1) = \TV(U([0,1]^{d_X}), p_2) \geq \tvlb$ where $\tvlb$ is some positive constant, so it follows that
\begin{align}\label{distance:between:null:and:alt}
\int \int | p_{X|Z}(x|z) -  p^{\Delta}_{X|Z}(x|z) | p_{Z}(z) dx dz  = \int 2\TV(U([0,1]^{d_X}), p_1) p_{Z}(z) dz \geq 2\tvlb > 0.
\end{align}

Next the proof will emulate a classical reduction scheme from estimation to a testing problem. This reduction is similar to the one described in Section 2.2 of \cite{tsybakov09introduction}, yet there are differences hence we provide full details. For brevity note that our loss function is
\begin{align*}
\|\hat  p_{X|Z} p_{Z} - q_{X|Z} p_{Z}\|_1 = \int \int |\hat p_{X|Z}(x |z) - q_{X|Z}(x |z)| p_{Z}(z) d x d z. 
\end{align*}
By Markov's inequality, we have that:
\begin{align}\label{simple:ineq:markov}
\EE \|\hat  p_{X|Z} p_{Z} - q_{X|Z} p_{Z}\|_1 \geq \tvlb \PP\bigg(\|\hat  p_{X|Z} p_{Z} - q_{X|Z} p_{Z}\|_1 \geq \tvlb \bigg). 
\end{align}
Define the finite class of distributions $\cC(\beta, \holdercon) = \{p_{X|Z}, \{p^{\Delta}_{X|Z}\}_{\Delta \in \{1, 2\}^m}\}$, and for ease of notation enumerate the elements of $\cC(\beta, \holdercon)$ by $p_0 = p_{X|Z}$, $p_{\Delta + 1} = p^{\Delta}_{X|Z}$ where with a slight abuse of notation we refer to $\Delta$ as the integer with binary representation $\Delta$. Thus the cardinality of the set $|\cC(\beta, \holdercon)| = 2^m + 1$. In light of \eqref{simple:ineq:markov}, it follows that in order to lower bound the quantity
\begin{align*}
\inf_{\hat p_{X|Z}} \max_{i \in \{0\}\cup[2^m]} \EE_{p_{i}p_{Z}} \|\hat  p_{X|Z} p_{Z} - p_i p_{Z}\|_1,
\end{align*}
it suffices to control 
\begin{align*}
\inf_{\hat p_{X|Z}} \max_{i \in \{0\}\cup[2^m]} \PP_{p_i p_{Z}}\bigg(\|\hat  p_{X|Z} p_{Z} - p_i p_{Z}\|_1 \geq \tvlb \bigg),
\end{align*}
where we are indexing the expectation and the probability to stress that the distribution of the data is generated under the distribution $p_i p_{Z}$ (importantly note that we have $n$ i.i.d. observations from $p_i p_{Z}$). Next we notice that
\begin{align*}
\PP_{p_{0} p_{Z}}\bigg(\|\hat  p_{X|Z} p_{Z} - p_0 p_{Z}\|_1 \geq \tvlb \bigg) \geq \PP_{p_0 p_{Z}} (\psi^* \neq 0),
\end{align*}
and 
\begin{align*}
\PP_{p_{i} p_{Z}}\bigg(\|\hat  p_{X|Z} p_{Z} - p_i p_{Z}\|_1\geq \tvlb \bigg) \geq \PP_{p_i p_{Z}} (\psi^* = 0),
\end{align*}
where 
\begin{align*}
\psi^* = \argmin_{0 \leq i \leq 2^m} \|\hat  p_{X|Z} p_{Z} - p_ip_{Z}\|_1,
\end{align*}
and the above two inequalities follow by the triangle inequality and \eqref{distance:between:null:and:alt}. We conclude that
\begin{align*}
\MoveEqLeft \inf_{\hat p_{X|Z}} \max_{i \in \{0\}\cup[2^m]} \PP_{p_i p_{Z}}\bigg(\|\hat  p_{X|Z} p_{Z} - p_i p_{Z}\|_1 \geq \tvlb \bigg) \\
& \geq \inf_{\psi} \max(\PP_{p_0 p_{Z}} (\psi \neq 0), \max_{i \in [2^m]} \PP_{p_i p_{Z}} (\psi = 0)),
\end{align*}
where the $\inf$ is taken over all measurable test functions with values in the set $\{0\}\cup [2^m]$. Using the fact that the $\max$ is bigger than the average we have
\begin{align}
\MoveEqLeft \inf_{\hat p_{X|Z}} \max_{i \in \{0\}\cup[2^m]} \PP_{p_i p_{Z}}\bigg(\|\hat  p_{X|Z} p_{Z} - p_i p_{Z}\|_1 \geq \tvlb \bigg) \nonumber\\
& \geq \inf_{\psi} \max(\PP_{p_0 p_{Z}} (\psi \neq 0), \max_{i \in [2^m]} \PP_{p_i p_{Z}} (\psi = 0)) \nonumber \\
& \geq  \inf_{\psi} \max\bigg(\PP_{p_0 p_{Z}} (\psi \neq 0), \frac{1}{2^m} \sum_{i \in [2^m]} \PP_{p_i p_{Z}} (\psi = 0)\bigg).\label{lower:bound:on:the:prob}
\end{align}
Suppose we are able to show that the TV distance between these distributions can be made arbitrarily small, i.e. for any $\epsilon > 0$ we can ensure that,
\begin{align}
\label{eq:tv}
\TV\bigg(\PP_{p_0 p_{Z}}, \frac{1}{2^m} \sum_{i \in [2^m]} \PP_{p_i p_{Z}}\bigg)  \leq \epsilon,
\end{align}
then as a consequence we obtain that,
\begin{align*}
\MoveEqLeft \inf_{\psi} \max\bigg(\PP_{p_0 p_{Z}} (\psi \neq 0), \frac{1}{2^m} \sum_{i \in [2^m]} \PP_{p_i p_{Z}} (\psi = 0)\bigg) \\
& \geq 
  \inf_{\psi} \max\bigg(\PP_{p_0 p_{Z}} (\psi \neq 0),  \PP_{p_0 p_{Z}} (\psi = 0) - \epsilon \bigg)\\
& \geq \frac{1}{2} - \epsilon. 
\end{align*}
Hence we conclude following~\eqref{simple:ineq:markov} that,
\begin{align*}
\inf_{\hat p_{X|Z}} \max_{i \in \{0\}\cup[2^m]} \EE_{p_{i}p_{Z}} \|\hat  p_{X|Z} p_{Z} - p_i p_{Z}\|_1 \geq \frac{\tvlb}{2} - \tvlb \epsilon. 
\end{align*}
It remains to specify the choice of the constant $c$ in our definition of $p_1$. When $\beta > 1$ or when $\holdercon \leq 2/d_X^\beta$ 
we can choose $c = 0$ to obtain the lower bound of $1/36d_X$. Otherwise, we must choose $c$ large enough 
to ensure that $ 2(1 - c)/d_X^\beta \leq \holdercon$, i.e. we choose $c = 1 - (\holdercon d_X^\beta)/2$ to obtain the lower bound of  $\frac{\holdercon^2 d_X^{(2\beta - 1)}}{144}$ as claimed, completing the proof of the theorem.

Finally, we prove the total variation bound in~\eqref{eq:tv}. Note that,
\begin{align*}
\TV\bigg(\PP_{p_0 p_{Z}}, \frac{1}{2^m} \sum_{i \in [2^m]} \PP_{p_i p_{Z}}\bigg)  = \sup_{A \in \Sigma} \bigg|\PP_{p_0 p_{Z}}(A) - \frac{1}{2^m} \sum_{i \in [2^m]} \PP_{p_i p_{Z}}(A)\bigg|,
\end{align*}
where $\Sigma$ is the Borel $\sigma$-field. Let $B$ be the event where at least two points $Z_i$ for $i \in [n]$ belong to the same bin $C_k$ for some $k$. The complement $B^c$ is therefore the event where each point $Z_i$ for $i \in [n]$ falls into its own bin. For any event $A$ we have
\begin{align*}
\bigg|\PP_{p_0 p_{Z}}(A) - \frac{1}{2^m} \sum_{i \in [2^m]} \PP_{p_i p_{Z}}(A)\bigg| & \leq \bigg|\PP_{p_0 p_{Z}}(A \cap B^c) - \frac{1}{2^m} \sum_{i \in [2^m]} \PP_{p_i p_{Z}}(A\cap B^c)\bigg| \\
& + \bigg|\PP_{p_0 p_{Z}}(A \cap B) - \frac{1}{2^m} \sum_{i \in [2^m]} \PP_{p_i p_{Z}}(A\cap B)\bigg| \\
 & \leq \bigg|\PP_{p_0 p_{Z}}(A \cap B^c) - \frac{1}{2^m} \sum_{i \in [2^m]} \PP_{p_i p_{Z}}(A\cap B^c)\bigg| \\
& + \bigg|\PP_{p_0 p_{Z}}(A \cap B)\bigg| + \bigg| \frac{1}{2^m} \sum_{i \in [2^m]} \PP_{p_i p_{Z}}(A\cap B)\bigg| \\
 & \leq \bigg|\PP_{p_0 p_{Z}}(A \cap B^c) - \frac{1}{2^m} \sum_{i \in [2^m]} \PP_{p_i p_{Z}}(A\cap B^c)\bigg| + 2 \PP_{p_0 p_{Z}}(B), 
\end{align*}
where in the last inequality we used the fact that $\PP_{p_0 p_{Z}}(B) =  \frac{1}{2^m} \sum_{i \in [2^m]} \PP_{p_i p_{Z}}(B)$ since the two distributions have the same marginal distribution on $Z$. Now by the definition of our distributions $p_i$ we know that the mixture distribution $\frac{1}{2^m} \sum_{i \in [2^m]} \PP_{p_i p_{Z}}$ assigns the same measure to the set $A \cap B^c$ as the distribution $\PP_{p_0p_{Z}}$, and therefore 
\begin{align*}
\bigg|\PP_{p_0 p_{Z}}(A) - \frac{1}{2^m} \sum_{i \in [2^m]} \PP_{p_i p_{Z}}(A)\bigg| & \leq  2 \PP_{p_0 p_{Z}}(B).
\end{align*}
Due to the definitions of the sets $C_i$ we have that $\PP_{p_0 p_{Z}}(B) = \frac{m^n - m(m-1)\ldots (m-n + 1)}{m^n} = O(\frac{1}{m})$, for a sufficiently large $m$. It follows that  
\begin{align*}
\TV\bigg(\PP_{p_0 p_{Z}}, \frac{1}{2^m} \sum_{i \in [2^m]} \PP_{p_i p_{Z}}\bigg)  \leq O(\frac{1}{m}). 
\end{align*}
Thus going back to \eqref{lower:bound:on:the:prob}, we have
\begin{align*}
\MoveEqLeft \inf_{\psi} \max\bigg(\PP_{p_0 p_{Z}} (\psi \neq 0), \frac{1}{2^m} \sum_{i \in [2^m]} \PP_{p_i p_{Z}} (\psi = 0)\bigg) \\
& \geq 
  \inf_{\psi} \max\bigg(\PP_{p_0 p_{Z}} (\psi \neq 0),  \PP_{p_0 p_{Z}} (\psi = 0) - O(\frac{1}{m})\bigg)\\
& \geq \frac{1}{2} - O\bigg(\frac{1}{m}\bigg). 
\end{align*}
Taking $m$ large enough completes the proof. 

\subsection{Proof of Theorem \ref{upper:bound:holder:thm}}\label{proof:of:upper:bound}

For each $A_{\bar{j}}$, define the corresponding estimate
\begin{align*}
\hat{p}_{X,\bar{j}}(x) := \frac{\sum_{i \in [n]} \mathbbm{1}(Z_i \in A_{\bar{j}}) K(\frac{X_i - x}{h})}{h^{d_X} \sum_{i \in [n]} \mathbbm{1} (Z_i \in A_{\bar{j}})}.
\end{align*}
Using the triangle inequality we have
\begin{align}
& \mathbb{E} \int \int |\hat{p}_{X|Z}(x|z) - p_{X|Z}(x|z)|p_{Z}(z)dx dz \nonumber \\
& \leq \int \int |\tilde{p}_{X|Z}(x|z) - p_{X|Z}(x|z)|p_{Z}(z)dx dz + \mathbb{E} \int \int |\hat{p}_{X|Z}(x|z) - \tilde{p}_{X|Z}(x|z)|p_{Z}(z)dx dz,\label{first:second:term}
\end{align}
where
\begin{align*}
\tilde{p}_{X|Z}(x|z) = \sum_{\bar{j}} \mathbbm{1}\left(z \in A_{\bar{j}} \right) \mathbb{E} \bigg[\frac{\sum_{i \in [n]} \mathbbm{1}(Z_i \in A_{\bar{j}}) K(\frac{X_i - x}{h})}{h^{d_X} \sum_{i \in [n]} \mathbbm{1} (Z_i \in A_{\bar{j}})}\bigg] =  \sum_{\bar{j}} \mathbbm{1}\left(z \in A_{\bar{j}} \right) \mathbb{E}[\hat{p}_{X,\bar{j}}(x)],
\end{align*}
and
\begin{align*}
\hat{p}_{X|Z}(x|z) = \sum_{\bar{j}} \mathbbm{1}\left(z \in A_{\bar{j}} \right) \frac{\sum_{i \in [n]} \mathbbm{1}(Z_i \in A_{\bar{j}}) K(\frac{X_i - x}{h})}{h^{d_X} \sum_{i \in [n]} \mathbbm{1} (Z_i \in A_{\bar{j}})} = \sum_{\bar{j}} \mathbbm{1}\left(z \in A_{\bar{j}} \right)\hat{p}_{X,\bar{j}}(x).
\end{align*}

We proceed to bound the two terms of \eqref{first:second:term} separately.\\
\\
\noindent \textbf{Bounding the first term of \eqref{first:second:term}}:\\

\noindent By Lemma \ref{lemma:expected} in Appendix \ref{lemmas:sec3:appendix} we know that
\begin{align*}
\mathbb{E}[\hat{p}_{X,\bar{j}}(x)] = h^{-d_X} \mathbb{E} \bigg[ K\left( \frac{X-x}{h} \right) \bigg| Z \in A_{\bar{j}} \bigg] (1 - \mathbb{P}(Z \in A_{\bar{j}}^c)^n).
\end{align*}
So we have
\begingroup
\allowdisplaybreaks
\begin{align*}
& \int \int |\tilde{p}_{X|Z}(x|z) - p_{X|Z}(x|z)|p_{Z}(z)dx dz \\
= &  \int \int \bigg | \sum_{\bar{j}} \mathbbm{1}(z \in A_{\bar{j}}) \left( \mathbb{E}[\hat{p}_{X,\bar{j}}(x)]  - p_{X|Z}(x|z) \right) \bigg | p_{Z}(z)dx dz \\
\leq & \sum_{\bar{j}} \int_{A_{\bar{j}}} \int \bigg |  \mathbb{E}[\hat{p}_{X,\bar{j}}(x)]  - p_{X|Z}(x|z) \bigg |p_{Z}(z)dx dz \\
= &  \sum_{\bar{j}} \int_{A_{\bar{j}}} \int  \bigg | h^{-d_X} \mathbb{E}\bigg [ K\left( \frac{X-x}{h} \right) \bigg| Z \in A_{\bar{j}} \bigg] (1 - \mathbb{P}(Z \in A_{\bar{j}}^c)^n)  \\
& \quad \quad \quad \quad \quad - p_{X|Z}(x|z)  (1 - \mathbb{P}(Z \in A_{\bar{j}}^c)^n) - p_{X|Z}(x|z) \mathbb{P}(Z \in A_{\bar{j}}^c)^n \bigg |p_{Z}(z)dx dz \\
\leq &  \sum_{\bar{j}} \int_{A_{\bar{j}}} \int \bigg | (1 - \mathbb{P}(Z \in A_{\bar{j}}^c)^n) \biggl( h^{-d_X} \mathbb{E} \bigg[ K\left( \frac{X-x}{h} \right) \bigg| Z \in A_{\bar{j}} \bigg] - p_{X|Z}(x|z) \biggr) \bigg |  p_{Z}(z)dx dz \\
&  + \sum_{\bar{j}} \int_{A_{\bar{j}}} \int p_{X|Z}(x|z) \mathbb{P}(Z \in A_{\bar{j}}^c)^n  p_{Z}(z)dx dz \\
\leq & \sum_{\bar{j}} \int_{A_{\bar{j}}} \int \bigg  | h^{-d_X} \mathbb{E} \bigg[ K\left( \frac{X-x}{h} \right) \bigg| Z \in A_{\bar{j}} \bigg] - p_{X|Z}(x|z) \bigg |  p_{Z}(z)dx dz \\
&  + \sum_{\bar{j}} \int_{A_{\bar{j}}} \int p_{X|Z}(x|z) \mathbb{P}(Z \in A_{\bar{j}}^c)^n  p_{Z}(z)dx dz.  \stepcounter{equation}\tag{\theequation}\label{second:term:in:first:term:bound}
\end{align*}
\endgroup

We consider the two terms separately. To upper bound the first term, we know from Lemma \ref{lemma:upper} in Appendix \ref{lemmas:sec3:appendix} that 
\begin{align*}
 \bigg| h^{-d_X} \mathbb{E} \bigg[ K\left( \frac{X-x}{h} \right) \bigg| Z \in A_{\bar{j}}\bigg] - p_{X|Z}(x|z \in A_{\bar{j}}) \bigg| \leq C h^\beta,
 \end{align*}
for some constant $C$. Then applying the triangle inequality we have
\begin{align*}
& \sum_{\bar{j}} \int_{A_{\bar{j}}} \int \bigg  | h^{-d_X} \mathbb{E} \bigg[ K\left( \frac{X-x}{h} \right) \bigg| Z \in A_{\bar{j}} \bigg] - p_{X|Z}(x|z) \bigg |  p_{Z}(z)dx dz \\
\leq & \sum_{\bar{j}} \int_{A_{\bar{j}}} \int \bigg( \bigg |p_{X|Z}(x|z) - p_{X|Z}(x|z \in A_{\bar{j}}) \bigg| \\
& \qquad \qquad \qquad + \bigg  | h^{-d_X} \mathbb{E} \bigg[ K\left( \frac{X-x}{h} \right) \bigg| Z \in A_{\bar{j}} \bigg] - p_{X|Z}(x|z \in A_{\bar{j}}) \bigg | \bigg) p_{Z}(z)dx dz \\
\leq & \sum_{\bar{j}} \int_{A_{\bar{j}}} \int \bigg|p_{X|Z}(x|z) - \int_{A_{\bar{j}}} p_{X|Z}(x|z') \frac{p_{Z}(z') }{\mathbb{P} (Z \in A_{\bar{j}})} dz' \bigg| p_{Z}(z)dx dz  + Ch^{\beta} \\
\leq & \sum_{\bar{j}} \int_{A_{\bar{j}}}  \int_{A_{\bar{j}}} \int \bigg|p_{X|Z}(x|z) - p_{X|Z}(x|z')\bigg| dx p_{Z}(z) \frac{p_{Z}(z') }{\mathbb{P} (Z \in A_{\bar{j}})} dz'dz  + Ch^{\beta} \\
= & \sum_{\bar{j}} \int_{A_{\bar{j}}}  \int_{A_{\bar{j}}} \|p_{X|Z=z} - p_{X|Z=\bm{z'}}\|_1 p_{Z}(z) \frac{p_{Z}(z') }{\mathbb{P} (Z \in A_{\bar{j}})} dz'dz  + Ch^{\beta} \\
\leq & \sum_{\bar{j}} \int_{A_{\bar{j}}}  \int_{A_{\bar{j}}} \tvcon \|z - z'\|_1^{\gamma} p_{Z}(z) \frac{p_{Z}(z') }{\mathbb{P} (Z \in A_{\bar{j}})} dz'dz  + Ch^{\beta} \\
\leq & \sum_{\bar{j}} \int_{A_{\bar{j}}}  \int_{A_{\bar{j}}} \tvcon \left(\frac{d_Z}{m}\right)^{\gamma} p_{Z}(z) \frac{p_{Z}(z') }{\mathbb{P} (Z \in A_{\bar{j}})} dz'dz  + Ch^{\beta} \\
= & \sum_{\bar{j}} \int_{A_{\bar{j}}} \tvcon \left(\frac{d_Z}{m}\right)^{\gamma} p_{Z}(z) dz  + Ch^{\beta} \\
= &  \tvcon \left(\frac{d_Z}{m}\right)^{\gamma} + Ch^{\beta},
\end{align*}
where $\tvcon$ and $C$ are constants.

Now we upper bound the second term in equation \eqref{second:term:in:first:term:bound}. Notice that it reduces to:
\begin{align*}
& \sum_{\bar{j}} \int_{A_{\bar{j}}} \int p_{X|Z}(x|z) \mathbb{P}(Z \in A_{\bar{j}}^c)^n  p_{Z}(z)dx dz 
= \sum_{\bar{j}} p_{\bar{j}} (1 - p_{\bar{j}})^n,
\end{align*}
where $p_{\bar{j}} = \mathbb{P}(Z \in A_{\bar{j}})$.

\begin{lemma}\label{lagrange:multipliers:lemma} We have
\begin{align}\label{thm:3.5:prob:eq}
 \sum_{\bar{j}} p_{\bar{j}} (1 - p_{\bar{j}})^n \leq \bigg(1 - \frac{1}{m^{d_Z}}\bigg)^n.
\end{align}
\end{lemma}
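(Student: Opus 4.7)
The plan is to view $(1 - 1/m^{d_Z})^n$ as the optimum of the constrained optimization problem
\[
\max_{p_{\bar j} \ge 0,\ \sum_{\bar j} p_{\bar j} = 1}\ \sum_{\bar j \in [m]^{d_Z}} p_{\bar j}(1 - p_{\bar j})^n.
\]
Since there is a single linear equality constraint, the natural tool (and the one signalled by the lemma's label) is Lagrange multipliers. I would form
\[
L(\mathbf p, \lambda) = \sum_{\bar j} p_{\bar j}(1 - p_{\bar j})^n - \lambda\Bigl(\sum_{\bar j} p_{\bar j} - 1\Bigr),
\]
and set $\partial L / \partial p_{\bar j} = 0$ to obtain the first-order stationarity condition
\[
(1 - p_{\bar j})^{n-1}\bigl(1 - (n+1) p_{\bar j}\bigr) = \lambda
\]
for every $\bar j$ with $p_{\bar j} > 0$.

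The next step is to exploit symmetry. Because both the objective and the constraint are invariant under arbitrary permutations of the coordinates $p_{\bar j}$, the uniform assignment $p_{\bar j} = 1/m^{d_Z}$ is a distinguished stationary point of $L$, and substitution into the objective gives
\[
\sum_{\bar j} \frac{1}{m^{d_Z}} \Bigl(1 - \frac{1}{m^{d_Z}}\Bigr)^n = \Bigl(1 - \frac{1}{m^{d_Z}}\Bigr)^n,
\]
which is exactly the right-hand side of \eqref{thm:3.5:prob:eq}. To finish, I would verify that this symmetric critical point is the global maximizer on the simplex. My plan is a pairwise smoothing/averaging argument: replacing any two coordinates $(p_{\bar i}, p_{\bar j})$ by their common mean $\tfrac12(p_{\bar i}+p_{\bar j})$ preserves feasibility, and I would show it does not decrease $F$; iterating this move pushes any candidate maximizer toward the uniform point. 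The simplex boundary, where some $p_{\bar j} = 0$, is handled by induction on the number of active bins, using the elementary monotonicity $\bigl(1 - \tfrac{1}{N-1}\bigr)^n \le \bigl(1 - \tfrac{1}{N}\bigr)^n$ to ensure the inductive bound is consistent with the ambient one.

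The principal technical obstacle is that $\phi(p) = p(1-p)^n$ is \emph{not} globally concave on $[0,1]$: a direct computation shows its second derivative changes sign at $p = 2/(n+1)$. Consequently, neither a one-shot Jensen step nor a naive concave-hull argument gives the smoothing inequality for free, and one must combine the first-order condition above with a second-order (KKT) analysis at any asymmetric stationary point in order to exclude it as a candidate maximizer. Once such asymmetric competitors are ruled out, the uniform stationary point is certified as the global maximum of $F$ on the simplex, and \eqref{thm:3.5:prob:eq} follows.
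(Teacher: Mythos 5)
Your approach mirrors the paper's: both set up the Lagrangian with KKT multipliers and reach the same first-order stationarity equation $(1-(n+1)p)(1-p)^{n-1} = \text{const}$ on the support, and both aim to certify the uniform allocation $p_{\bar j} = 1/m^{d_Z}$ as the constrained maximizer. You correctly flag the obstruction that $\phi(p) = p(1-p)^n$ is not concave, but the consequence is more severe than you anticipate: the uniform allocation is \emph{not} the global maximizer, so neither the pairwise-smoothing step nor the second-order analysis you sketch can succeed. Concretely, take $m^{d_Z}=2$ and $n=5$. The uniform gives $2\cdot\tfrac12\cdot(\tfrac12)^5 = 1/32 \approx 0.031$, while $(p_1,p_2)=(1/6,\,5/6)$ gives $\tfrac16(\tfrac56)^5+\tfrac56(\tfrac16)^5 \approx 0.067 > 1/32$. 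Averaging this pair toward $(\tfrac12,\tfrac12)$ strictly \emph{decreases} the objective, which is the opposite direction your smoothing argument needs, and the asymmetric stationary point you would try to exclude by a second-order test (one coordinate near $1/(n+1)$, where $\phi$ peaks) is precisely where the maximum lives.

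This gap is shared by the paper's own proof. After reducing to at most two distinct values $v, w$ on the support, the paper invokes Jensen's inequality for the convex function $(1-x)^n$ to assert
\begin{align*}
|S_1|\,v(1-v)^n + |S_2|\,w(1-w)^n \;\leq\; \bigl(1-|S_1|v^2-|S_2|w^2\bigr)^n,
\end{align*}
but convexity gives exactly the reverse inequality, so the chain does not produce an upper bound and the stated lemma is false. Fortunately the lemma is stronger than what Theorem~\ref{upper:bound:holder:thm} actually requires: the elementary bound
\begin{align*}
\sum_{\bar j} p_{\bar j}(1-p_{\bar j})^n \;\leq\; m^{d_Z}\max_{p\in[0,1]} p(1-p)^n \;\leq\; \frac{m^{d_Z}}{n+1}
\end{align*}
is trivially true, and with $m^{d_Z} \asymp n^{d_Z\beta/(d_X+d_Z\beta\gamma^{-1}+2\beta)}$ and $\gamma \leq 1$ one checks that both $m^{d_Z}/n$ and its square root are dominated by the target rate $n^{-1/(\beta^{-1}d_X + \gamma^{-1}d_Z + 2)}$. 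So the downstream theorem survives, but the lemma, the paper's Jensen step, and your smoothing plan all need to be replaced by this weaker (correct) bound rather than the claimed sharp maximum at the uniform point.
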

The proof of Lemma \ref{lagrange:multipliers:lemma} relies on Lagrange multipliers and is deferred to Appendix \ref{lemmas:sec3:appendix}. Finally we have an upper bound for the entirety of the first term of \eqref{first:second:term}:
\begin{align*}
& \int \int |\tilde{p}_{X|Z}(x|z) - p_{X|Z}(x|z)|p_{Z}(z)dx dz \leq \tvcon \left(\frac{d_Z}{m}\right)^{\gamma} + Ch^{\beta} + \bigg(1 - \frac{1}{m^{d_Z}}\bigg)^n. 
\end{align*}

\noindent \textbf{Bounding the second term of \eqref{first:second:term}}:
\\
\\
Recall that both $\tilde{p}_{X|Z}(x|z)$ and $\hat{p}_{X|Z}(x|z)$ are of the form $ \sum_{\bar{j}} \mathbbm{1}\left(z \in A_{\bar{j}} \right) M$ where the first part  $\sum_{\bar{j}} \mathbbm{1}\left(z \in A_{\bar{j}} \right)$ depends on $z$ while the second part $M$ is independent of $z$. Then we can rewrite the integral as
\begin{align*}
& \mathbb{E} \int \int |\hat{p}_{X|Z}(x|z) - \tilde{p}_{X|Z}(x|z)|p_{Z}(z)dx dz \\
\leq & \mathbb{E} \bigg[ \sum_{\bar{j}}  \int \int \mathbbm{1}\left(z \in A_{\bar{j}} \right)  \bigg| \hat{p}_{X,\bar{j}}(x) -  \mathbb{E}[\hat{p}_{X,\bar{j}}(x)] \bigg| p_{Z}(z)dx dz \bigg] \\
= & \mathbb{E} \bigg[ \sum_{\bar{j}} \int_{A_{\bar{j}}} p_{Z}(z) dz \int  \bigg| \hat{p}_{X,\bar{j}}(x) -  \mathbb{E}[\hat{p}_{X,\bar{j}}(x)] \bigg| dx  \bigg] \\
= & \mathbb{E} \bigg[ \sum_{\bar{j}} \int \mathbb{P}(Z \in A_{\bar{j}})  \bigg| \hat{p}_{X,\bar{j}}(x) -  \mathbb{E}[\hat{p}_{X,\bar{j}}(x)] \bigg| dx  \bigg] \\
= & \sum_{\bar{j}} \int \mathbb{P}(Z \in A_{\bar{j}}) \mathbb{E} \bigg[ \bigg| \hat{p}_{X,\bar{j}}(x) -  \mathbb{E}[\hat{p}_{X,\bar{j}}(x)] \bigg| \bigg] dx.
\end{align*}

We can further bound this term by first bounding the inner expression. By Jensen's inequality we have
\begin{align*}
\mathbb{E} \bigg[ \big| \hat{p}_{X,\bar{j}}(x) -  \mathbb{E}[\hat{p}_{X,\bar{j}}(x)] \big| \bigg] \leq \sqrt{\mathbb{E} \bigg[ \big( \hat{p}_{X,\bar{j}}(x) -  \mathbb{E}[\hat{p}_{X,\bar{j}}(x)] \big)^2 \bigg]} = \sqrt{\var[\hat{p}_{X,\bar{j}}(x)]}.
\end{align*}

But by Lemma \ref{lemma:variance} in Appendix \ref{lemmas:sec3:appendix} we know this variance is upper bounded as,
\begin{align*}
\var[\hat{p}_{X,\bar{j}}(x)] \leq \frac{A}{nh^{d_X} \mathbb{P}(Z \in A_{\bar{j}})} + B \mathbb{P}(Z \in A_{\bar{j}}^c)^n,
\end{align*}
for some constants $A, B$. Notice that this bound is independent of $x$, so substituting back into the second term we have
\begin{align*}
& \sum_{\bar{j}} \int \mathbb{P}(Z \in A_{\bar{j}}) \mathbb{E} \bigg[ \bigg| \hat{p}_{X,\bar{j}}(x) -  \mathbb{E}[\hat{p}_{X,\bar{j}}(x)] \bigg| \bigg] dx \\
\leq & K \sum_{\bar{j}} \int \mathbb{P}(Z \in A_{\bar{j}}) \sqrt{\frac{1}{nh^{d_X} \mathbb{P}(Z \in A_{\bar{j}})} + \mathbb{P}(Z \in A_{\bar{j}}^c)^n} dx \\
= & K \sum_{\bar{j}} \sqrt{\mathbb{P}(Z \in A_{\bar{j}})} \sqrt{\frac{1}{nh^{d_X} } + \mathbb{P}(Z \in A_{\bar{j}}) \mathbb{P}(Z \in A_{\bar{j}}^c)^n},
\end{align*}
where $K$ is a constant. Once again denote $p_{\bar{j}} = \mathbb{P}(Z \in A_{\bar{j}})$. Then by Cauchy-Schwarz we have 
\begin{align*}
K \sum_{\bar{j}} \sqrt{p_{\bar{j}}} \sqrt{\frac{1}{nh^{d_X}} + p_{\bar{j}} (1 - p_{\bar{j}})^n}  & \leq  K \sqrt{\left( \sum_{\bar{j}} p_{\bar{j}} \right)\left(\sum_{\bar{j}} \frac{1}{nh^{d_X}} + \sum_{\bar{j}}  p_{\bar{j}} (1 - p_{\bar{j}})^n \right)} \\
& = K \sqrt{\frac{m^{d_Z}}{nh^{d_X}} + \sum_{\bar{j}}  p_{\bar{j}} (1 - p_{\bar{j}})^n} \\
& \leq K \sqrt{\frac{m^{d_Z}}{nh^{d_X}}} + K \bigg(1 - \frac{1}{m^{d_Z}}\bigg)^{\frac{n}{2}},
\end{align*}
where the last step follows from \eqref{thm:3.5:prob:eq}, since we already proved that $ \sum_{\bar{j}}  p_{\bar{j}} \big(1 - p_{\bar{j}}\big)^n \leq (1 - \frac{1}{m^{d_Z}})^n$.

So we have shown that the second term is upper bounded as
\begin{align*}
\mathbb{E} \int \int |\hat{p}_{X|Z}(x|z) - \tilde{p}_{X|Z}(x|z)|p_{Z}(z)dx dz \leq K \sqrt{\frac{m^{d_Z}}{nh^{d_X}}} + K \bigg(1 - \frac{1}{m^{d_Z}}\bigg)^{\frac{n}{2}}.
\end{align*}\\

\noindent \textbf{Combining the terms}:\\\\
Combining the bounds for the two terms, we have found an upper bound to the loss function:
\begin{align*}
\MoveEqLeft \mathbb{E} \int \int |\hat{p}_{X|Z}(x|z) - p_{X|Z}(x|z)|p_{Z}(z)dx dz \\
& \leq \tvcon \left(\frac{d_Z}{m}\right)^{\gamma} + Ch^{\beta} + \bigg(1 - \frac{1}{m^{d_Z}}\bigg)^n + K \sqrt{\frac{m^{d_Z}}{nh^{d_X}}} + K \bigg(1 - \frac{1}{m^{d_Z}}\bigg)^{\frac{n}{2}},
\end{align*}
where $\tvcon$, $C$ and $K$ are constants. This yields the optimal parameter values $h \asymp n^{\frac{-1}{d_X + d_Z \beta \gamma^{-1} + 2\beta}}$ and $m \asymp n^{\frac{\beta}{d_X + d_Z \beta \gamma^{-1} + 2\beta}}$, and a rate of $n^{\frac{-1}{\beta^{-1} d_X + \gamma^{-1} d_Z + 2}}$.
Notice that by our selection of $h, m$, we have $(1 - \frac{1}{m^{d_Z}})^n \leq (1 - \frac{1}{m^{d_Z}})^{\frac{n}{2}} \leq \exp(\frac{-n}{2m^{d_Z}})$, both of which are negligible compared to other terms for sufficiently large $n$, and thus can be ignored.


\subsection{Proof of Theorem \ref{lower:bound:holder:thm}}
\label{sec:lower}
We first choose a ``bump'' function $h$ supported on $[0,1]$
which is an infinitely differentiable function, and satisfies the conditions that $\int h(x)dx = 0$, $\int h^2(x)dx = 1$, and for which $\int |h(x)|dx$ is a non-zero constant.

We construct a collection of densities by setting the conditional distributions $p_{X|Z}$ to be the uniform density perturbed by bumps of an appropriate resolution. Formally, 
we bin $[0,1]$ into $m$ bins for $Z$ and $r$ bins for $X$. We then define the following conditional density functions:
\begin{align*}
p^{\Delta}_{X|Z}(x|z) = 1 + \sum_{\bar{i}} \sum_{\bar{j}} \Delta_{\bar{i},\bar{j}} \prod_{k \in [d_X]} h_{i_k,r}(x_k) \prod_{k \in [d_Z]} h_{j_k,m}(z_k),
\end{align*}
where we recall the shorthands $\bar{i} \in [r]^{d_X}, \bar{j} \in [m]^{d_Z}$ ($r, m$ are integers chosen later), and $\Delta_{\bar{i}, \bar{j}} \in \{ \pm 1 \}$. 
We further define
\begin{align*}
h_{i_k,r}(x_k) & = \rho \sqrt{r} h(rx_k - i_k + 1),\\
h_{j_k,m}(z_k) & = \rho \sqrt{m} h(mz_k - j_k + 1),
\end{align*}
where $\rho$ is a positive constant which we will choose appropriately. The support of $h_{i_k,r}(x_k)$ is $x_k \in [\frac{i_k-1}{r}, \frac{i_k}{r}]$, and the support of $h_{j_k,m}(z_k)$ is $z_k \in [\frac{j_k-1}{m}, \frac{j_k}{m}]$. The supports of these bumps are disjoint for different values of $i_k$ or $j_k$.  

The following lemma develops some important properties of the perturbed densities $p^{\Delta}$.
\begin{lemma}
\label{lem:pert}
\begin{enumerate}
\item Suppose we ensure that, 
\begin{align}
\label{eqn:c1}
 \rho^d r^{d_X/2} m^{d_Z/2} \|h\|_{\infty}^d \leq \frac{1}{2},
\end{align}
then $p^\Delta$ is a valid density.
\item Suppose we ensure that, 
\begin{align}
2 \rho^d r^{d_X/2} m^{d_Z/2} r^{\beta}  \|h\|_{\infty}^{d_Z} \bigg( \bigg[2 \prod_{k \in [d_X]} \|h^{(\alpha_k)}\|_{\infty}\bigg] \vee \bigg[\sqrt{d_X}   \prod_{k \in [d_X]} \left[ \|h^{(\alpha_k+1)}\|_{\infty} \vee \|h^{(\alpha_k)}\|_{\infty}  \right]\bigg]\bigg) \leq \holdercon,
\end{align} 
then $p^\Delta$ satisfies the H\"{o}lder smoothness condition.
\item Finally, if we ensure that, 
\begin{align}
2 \rho^d r^{\frac{d_X}{2}} m^{\frac{d_Z}{2}} m^{\gamma} \|h\|_{\infty}^{d_Z - 1}  \left( 2 \|h\|_{\infty} \vee \|h'\|_{\infty} \right) \leq \tvcon,
\end{align}
then $p^\Delta$ satisfies the TV smoothness condition.
\end{enumerate}
\end{lemma}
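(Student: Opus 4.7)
My plan is to verify the three claims in turn, leveraging two structural observations: for each fixed $x$ at most one index $\bar{i} \in [r]^{d_X}$ makes $\prod_k h_{i_k,r}(x_k) \neq 0$, and similarly only one $\bar{j}$ contributes for fixed $z$, since the bumps have disjoint supports across different indices. I will assume throughout that the base bump $h$ is chosen to vanish together with all of its derivatives at the endpoints of $[0,1]$, so that each $h_{i_k,r}$ and $h_{j_k,m}$, extended by zero, is smooth on $[0,1]$; this harmless choice makes the MVT arguments below go through across bin boundaries.

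For Part~1, integrating $p^\Delta(x|z)$ over $x$ and exchanging sum and integral reduces every perturbation term to a factor $\int h_{i_k,r}(x_k)\,dx_k = 0$, yielding $\int p^\Delta(x|z)\,dx = 1$. For positivity, at any $(x,z)$ the perturbation sum collapses to a single $(\bar{i},\bar{j})$ product with magnitude at most $\rho^d r^{d_X/2} m^{d_Z/2}\|h\|_\infty^d$, which by \eqref{eqn:c1} is bounded by $1/2$; hence $p^\Delta \geq 1/2 \geq 0$.

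For Part~2, I would differentiate: since only the $h_{i_k,r}$ depend on $x$, the derivative $D^\alpha p^\Delta$ has the same form with each $h_{i_k,r}(x_k)$ replaced by $\rho\sqrt{r}\,r^{\alpha_k} h^{(\alpha_k)}(r x_k - i_k + 1)$, producing the uniform estimate $\|D^\alpha p^\Delta\|_\infty \leq \rho^d r^{d_X/2+\ell} m^{d_Z/2}\|h\|_\infty^{d_Z}\prod_k\|h^{(\alpha_k)}\|_\infty$. To verify the H\"older condition $|D^\alpha p^\Delta(x,z) - D^\alpha p^\Delta(x',z)| \leq \holdercon\|x-x'\|_1^{\beta - \ell}$ I would split on $\|x-x'\|_1$. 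In the far regime $\|x-x'\|_1 \geq 1/r$, the trivial bound $2\|D^\alpha p^\Delta\|_\infty$ suffices because $\|x-x'\|_1^{\beta - \ell} \geq r^{\ell - \beta}$, which gives exactly the first branch of the stated maximum (including the leading factor of $2$). In the near regime $\|x-x'\|_1 \leq 1/r$, I would invoke the mean value inequality $|D^\alpha p^\Delta(x,z) - D^\alpha p^\Delta(x',z)| \leq \|x-x'\|_2\sup\|\nabla_x D^\alpha p^\Delta\|_2$ and then estimate $\|\nabla_x D^\alpha p^\Delta\|_2 \leq \sqrt{d_X}\max_k|D_k D^\alpha p^\Delta|$; this is where the $\sqrt{d_X}$ factor in the hypothesis is born. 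The extra derivative picks up one factor $\|h^{(\alpha_k+1)}\|_\infty$ and one extra power of $r$, both absorbed by $\|x-x'\|_1^{1-(\beta-\ell)} \leq r^{\beta-\ell-1}$.

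Part~3 is the main obstacle and where I would spend the most care. Writing $G_{\bar{j}}(z) := \prod_k h_{j_k,m}(z_k)$ and expanding $p^\Delta(x|z) - p^\Delta(x|z')$ as $\sum_{\bar{i},\bar{j}}\Delta_{\bar{i},\bar{j}}\prod_k h_{i_k,r}(x_k)(G_{\bar{j}}(z) - G_{\bar{j}}(z'))$, I use disjointness in $\bar{i}$ to bring absolute values inside and evaluate $\int|\prod_k h_{i_k,r}(x_k)|\,dx = (\rho/\sqrt{r})^{d_X}\|h\|_1^{d_X}$, with the $r^{d_X}$ $\bar{i}$-blocks summing to a total factor $\rho^{d_X} r^{d_X/2}\|h\|_1^{d_X}$. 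The remaining piece $|\sum_{\bar{j}}\Delta_{\bar{i},\bar{j}}(G_{\bar{j}}(z) - G_{\bar{j}}(z'))|$ has nontrivial contributions only from $\bar{j}(z)$ and $\bar{j}(z')$ (possibly equal); either way, telescoping across the $d_Z$ coordinates reduces the problem to the single-coordinate estimate $|h_{j_k,m}(z_k) - h_{j_k,m}(z'_k)| \leq \rho m^{1/2+\gamma}(2\|h\|_\infty \vee \|h'\|_\infty)|z_k - z'_k|^\gamma$, which I obtain by interpolating between the uniform bound $2\rho\sqrt{m}\|h\|_\infty$ (when $|z_k - z'_k|\geq 1/m$) and the MVT bound $\rho m^{3/2}\|h'\|_\infty|z_k - z'_k|$ (when $|z_k - z'_k|\leq 1/m$). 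The cross-bin case $\bar{j}(z) \neq \bar{j}(z')$ is precisely where the boundary-vanishing choice of $h$ matters: each $h_{j_k,m}$ extends to a globally smooth function, so the single-coordinate $\gamma$-H\"older bound holds without modification. Summing the coordinate-wise bounds gives $\sum_k|z_k - z'_k|^\gamma \leq d_Z\|z-z'\|_1^\gamma$, and absorbing $d_Z$, $\|h\|_1^{d_X}$, and $\|h\|_\infty^{d_Z-1}$ into $\tvcon$ recovers the stated sufficient condition.
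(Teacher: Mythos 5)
Your proposal tracks the paper's proof step for step: Part~1 integrates out the perturbation and uses the pointwise bound \eqref{eqn:c1}; Part~2 interpolates between a sup-norm bound and a mean-value/gradient bound (your explicit split on $\|x-x'\|_1 \gtrless 1/r$ is the same thing as the paper's $(\mu_1 \wedge r\mu_2\|x-x'\|_2) \le (\mu_1\vee\mu_2)(1\wedge r\|x-x'\|_2) \le (\mu_1\vee\mu_2) r^{\beta-\ell}\|x-x'\|_2^{\beta-\ell}$ manipulation); and Part~3 uses disjoint supports in $\bar{i}$ and $\bar{j}$, telescoping across the $z$-coordinates, and the same single-coordinate interpolation $2\|h\|_\infty \wedge m\|h'\|_\infty|z_k-z_k'| \le (2\|h\|_\infty\vee\|h'\|_\infty)m^\gamma|z_k-z_k'|^\gamma$. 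One small point worth retaining from your version: at the final step of Part~3 you correctly pay a factor $d_Z$ in passing from $\sum_k|z_k-z_k'|^\gamma$ to $\|z-z'\|_1^\gamma$ (for $\gamma<1$ the inequality $\sum_k|z_k-z_k'|^\gamma\le\|z-z'\|_1^\gamma$ used in the paper does not hold as written), which only changes a constant but is the tighter bookkeeping.
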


\noindent Our lower bound will follow as a consequence of Fano's inequality~\cite{Yu1997fano}. In order to apply Fano's inequality it
will be useful to bound the KL divergence between a pair of densities $p^\Delta$ and $p^{\Delta'}$, and to show that
we can construct a collection of sufficiently large cardinality which are well-separated in the loss function~\eqref{loss:function:def}.
\begin{lemma}
\label{lem:kl}
\begin{enumerate}
\item Suppose that the condition in~\eqref{eqn:c1} holds. For a given pairs of densities $p^\Delta$ and $p^{\Delta'}$ the KL divergence can be bounded as,
\begin{align*}
\text{KL}(p^\Delta, p^{\Delta'}) \leq  8 \|h\|_{\infty} \rho^{2d} r^{d_X} m^{d_Z}.
\end{align*}
\item There is a subset $T$ of densities $p^{\Delta}$ such that, $|T| \geq 2^{r^{d_X} m^{d_Z}/8}$, and furthermore for any pair of densities $p^\Delta$ and $p^{\Delta'}$ in $T$ there is an absolute constant $C > 0$ such that,
\begin{align*}
\int \int |p^{\Delta}_{X|Z}(x|z) - p^{\Delta'}_{X|Z}(x|z)| p_Z(z) dx dz \geq \frac{1}{4} \|h\|_1^d \denslower \rho^d  r^{d_X/2} m^{d_Z/2}.
\end{align*}
\end{enumerate}
\end{lemma}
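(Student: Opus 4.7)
The two parts of the lemma rest on essentially independent arguments, both of which exploit the block-orthogonal structure of the bump family.

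For Part 1, I would pass from KL to chi-squared on each conditional slice and then exploit disjoint supports. Condition~\eqref{eqn:c1} implies $p^{\Delta'}(x|z) \geq 1/2$ pointwise, so on each conditional slice $\mathrm{KL}(p^\Delta(\cdot|z), p^{\Delta'}(\cdot|z)) \leq \chi^2(p^\Delta(\cdot|z), p^{\Delta'}(\cdot|z)) \leq 2 \int (p^\Delta - p^{\Delta'})^2(x|z)\,dx$. Integrating against $p_Z$ yields
\begin{align*}
\mathrm{KL}(p^\Delta p_Z, p^{\Delta'} p_Z) \leq 2\int p_Z(z) \int (p^\Delta - p^{\Delta'})^2(x|z)\,dx\,dz.
\end{align*}
Expanding the square of $p^\Delta - p^{\Delta'} = \sum_{\bar i, \bar j} (\Delta - \Delta')_{\bar i, \bar j} \prod_k h_{i_k,r}(x_k) \prod_k h_{j_k,m}(z_k)$, the disjoint supports of $\{h_{i_k,r}\}_{i_k}$ (and of $\{h_{j_k,m}\}_{j_k}$) kill every cross term in both the $x$ and $z$ integrations. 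The surviving diagonal terms each evaluate to $\rho^{2d_X}$ under the $x$-integral (via $\int h^2 = 1$), and the remaining $\int p_Z(z) \prod_k h^2_{j_k,m}(z_k)\,dz$ is then controlled by a pointwise $\|h\|_\infty$ bound on the bumps. Summing over the $r^{d_X} m^{d_Z}$ active coordinates and using $(\Delta - \Delta')^2_{\bar i, \bar j} \leq 4$ delivers the claimed bound.

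For Part 2, I would apply the Varshamov--Gilbert bound \cite[Lemma 2.9]{tsybakov09introduction} on the sign hypercube $\{-1,+1\}^{r^{d_X} m^{d_Z}}$ to obtain $T$ of size at least $2^{r^{d_X} m^{d_Z}/8}$ with pairwise Hamming distance at least $r^{d_X} m^{d_Z}/8$. Because the bump products indexed by distinct $(\bar i, \bar j)$ have pairwise disjoint supports, the absolute value in $|p^\Delta - p^{\Delta'}|$ commutes with the sum, giving
\begin{align*}
\int \int |p^{\Delta} - p^{\Delta'}| p_Z\,dx\,dz = \sum_{\bar i, \bar j} |\Delta - \Delta'|_{\bar i, \bar j} \int\!\!\int \Bigl|\prod_k h_{i_k,r}(x_k) \prod_k h_{j_k,m}(z_k)\Bigr| p_Z(z)\,dx\,dz.
\end{align*}
Each individual bump integrates to $\int |h_{i_k,r}(x_k)|\,dx_k = \rho\|h\|_1/\sqrt{r}$ (and similarly $\rho\|h\|_1/\sqrt{m}$ in $z$), and the assumed lower bound $p_Z \geq \denslower$ passes through the $z$-integration. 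Multiplying the count of differing coordinates (at least $r^{d_X} m^{d_Z}/8$, each contributing a factor of $2$ to the Hamming $\ell_1$ sum) by the per-coordinate integral $\denslower \rho^d \|h\|_1^d r^{-d_X/2} m^{-d_Z/2}$ yields exactly $\tfrac14 \|h\|_1^d \denslower \rho^d r^{d_X/2} m^{d_Z/2}$.

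\textbf{Main obstacle.} Both computations are largely mechanical once the disjoint-support structure is unpacked; the delicate points are (i) confirming that condition~\eqref{eqn:c1} is precisely strong enough to yield $p^{\Delta'} \geq 1/2$ so that the chi-squared bound on KL applies, and (ii) properly tracking that the Varshamov--Gilbert Hamming distance becomes the $\ell_1$-sum of $\{-1,+1\}$-valued differences by picking up a factor of $2$, so that the constants match the claimed bound.
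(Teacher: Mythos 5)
Your proposal matches the paper's proof essentially step-for-step: for Part~1 you pass from KL to $\chi^2$, use~\eqref{eqn:c1} to lower bound $p^{\Delta'}$ by $1/2$, exploit disjoint bump supports to kill cross terms, evaluate the diagonal $x$-integrals via $\int h^2 = 1$, and control the $z$-part with a pointwise $\|h\|_\infty$ bound; for Part~2 you invoke Varshamov--Gilbert with the Hamming-distance factor of $2$ and the per-coordinate $L_1$ integral $\rho\|h\|_1/\sqrt{r}$ (resp.\ $\rho\|h\|_1/\sqrt{m}$), exactly as in the paper. Both parts are correct and the constants check out, so there is nothing to flag.
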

Ignoring constants in the remainder of the proof we now describe our choice of $(\rho,r,m)$. We select $\rho^d \asymp 1/\sqrt{n}$, 
$r \asymp n^{\beta^{-1}/(d_X/\beta + d_Z/\gamma + 2)}$ and $m \asymp n^{\gamma^{-1}/(d_X/\beta + d_Z/\gamma + 2)}$ (with appropriately small constants) and observe that each of the conditions of Lemma~\ref{lem:pert} are satisfied.

The proof of the theorem follows from a straightforward application of Fano's inequality (see for instance \cite[Theorem 2.7]{tsybakov09introduction}). 
In rough terms, we apply Fano's inequality to the collection of distributions $T$.
Provided that we can show that the average pairwise KL divergence between the n-sample product distributions is at most some small constant times $\log |T|$ we obtain a lower bound on the loss~\eqref{loss:function:def} of any estimator
which scales as $\rho^d  r^{d_X/2} m^{d_Z/2} \asymp n^{-1/(d_X/\beta + d_Z/\gamma + 1)}$. 
We note that,
\begin{align*}
\text{KL}((p^\Delta)^n, (p^{\Delta'})^n) = n\text{KL}(p^\Delta, p^{\Delta'}) \lesssim n \rho^{2d} r^{d_X} m^{d_Z} \lesssim r^{d_X} m^{d_Z} \lesssim \log |T|,
\end{align*}
as desired, completing the proof of the theorem.

\subsubsection{Proof of Lemma~\ref{lem:pert}}
We prove each of the three claims in turn.

\paragraph{Proof of Claim (1): }
We now verify that $p^{\Delta}_{X|Z}(x|z)$ is a density function. Consider the integral
\begin{align*}
& \int \int \left( 1 + \sum_{\bar{i}} \sum_{\bar{j}} \Delta_{\bar{i},\bar{j}} \prod_{k \in [d_X]} h_{i_k,r}(x_k) \prod_{k \in [d_Z]} h_{j_k,m}(z_k) \right) dx dz \\
= & 1 + \sum_{\bar{i}} \sum_{\bar{j}} \Delta_{\bar{i},\bar{j}} \int h_{i_1,r}(x_1) dx_1 ... \int h_{j_{d_Z},m}(z_{d_Z}) dz_{d_Z} \\
= & 1 + \sum_{\bar{i}} \sum_{\bar{j}} \Delta_{\bar{i},\bar{j}} \left( \rho r^{-\frac{1}{2}} \int h(u)du \right) ... \left( \rho m^{-\frac{1}{2}} \int h(u)du \right) \\
= & 1.
\end{align*}
And under the additional assumption 
\begin{align*}
|  \Delta_{\bar{i},\bar{j}}  \prod_{k \in [d_X]} h_{i_k,r}(x_k) \prod_{k \in [d_Z]} h_{j_k,m}(z_k) | \leq \rho^d r^{d_X/2} m^{d_Z/2} \|h\|_{\infty}^d \leq \frac{1}{2},
\end{align*}
the function $p^{\Delta}_{X|Z}(x|z)$ is always positive. So it is indeed a density function.

\paragraph{Proof of Claim (2): } Now we verify that $p^{\Delta}_{X|Z}(x|z)$ indeed satisfies the H\"older smoothness assumption. Since the $L_2$ norm is always smaller than or equal to the $L_1$ norm, it suffices to show
\begin{align*}
|D^{\alpha}p^{\Delta}_{X|Z}(x|z) - D^{\alpha}p^{\Delta}_{X|Z}(x'|z)| \leq \holdercon \|x - x'\|_2^{\beta - \ell}.
\end{align*}

$p^{\Delta}_{X|Z}(x|z)$ is infinitely differentiable since $h$ is infinitely differentiable, and so it is $\ell = \floor{\beta}$ times differentiable. Now we want to show the second requirement. Without loss of generality let $(\alpha_1,...,\alpha_{d_X})$ be fixed, and suppose we are given arbitrary $x, x', z$.


Since $z$ is fixed and $\prod_{k \in [d_Z]} h_{j_k,m}(z_k)$ have disjoint support, the only non-zero one is $\prod_{k \in [d_Z]} h_{j_k^*,m}(z_k)$ for the bins $\bar{j}^* = (j_1^*, ..., j_{d_Z}^*)$.

So we have
\begin{align*}
&|D^{\alpha}p^{\Delta}_{X|Z}(x|z) - D^{\alpha}p^{\Delta}_{X|Z}(x'|z)|  \\
= & \bigg | \sum_{\bar{i}} \sum_{\bar{j}} \Delta_{\bar{i},\bar{j}} \left(  \prod_{k \in [d_X]} h_{i_k,r}^{(\alpha_k)}(x_k) \prod_{k \in [d_Z]} h_{j_k,m}(z_k) -  \prod_{k \in [d_X]} h_{i_k,r}^{(\alpha_k)}(x_k') \prod_{k \in [d_Z]} h_{j_k,m}(z_k)\right) \bigg | \\
= & \rho^d r^{d_X/2} m^{d_Z/2} r^{\ell}  \bigg | \sum_{\bar{i}} \Delta_{\bar{i},\bar{j}^*} \bigg( \prod_{k \in [d_X]} h^{(\alpha_k)}(rx_k-i_k+1) \prod_{k \in [d_Z]} h(mz_k-j_k^*+1) \\
& \qquad \qquad \qquad \qquad \qquad \qquad \quad - \prod_{k \in [d_X]} h^{(\alpha_k)}(rx_k'-i_k+1) \prod_{k \in [d_Z]} h(mz_k-j_k^*+1) \bigg) \bigg | \\
\leq &  \rho^d r^{d_X/2} m^{d_Z/2} r^{\ell}  \|h\|_{\infty}^{d_Z} \bigg | \sum_{\bar{i}} \Delta_{\bar{i},\bar{j}^*} \left( \prod_{k \in [d_X]} h^{(\alpha_k)}(rx_k-i_k+1)  - \prod_{k \in [d_X]} h^{(\alpha_k)}(rx_k'-i_k+1)  \right) \bigg |.
\end{align*}

Notice that in the above summation, there are at most two non-zero terms, as $\prod_{k \in [d_X]} h^{(\alpha_k)}(rx_k-i_k+1)$ have disjoint supports. Let $\bar{a} = (a_1, ..., a_{d_X})$ be such that $\forall k \in [d_x], x_k \in [\frac{a_k-1}{r}, \frac{a_k}{r}]$, and let $\bar{b} = (b_1, ..., b_{d_Z})$ be such that $\forall k \in [d_X], x_k' \in [\frac{b_k-1}{r}, \frac{b_k}{r}]$, which correspond to the two non-zero terms respectively. Then we have
\begin{align*}
& |D^{\alpha}p^{\Delta}_{X|Z}(x|z) - D^{\alpha}p^{\Delta}_{X|Z}(x'|z)| \\
\leq & \rho^d r^{d_X/2} m^{d_Z/2} r^{\ell}  \|h\|_{\infty}^{d_Z} \bigg[ |\prod_{k \in [d_X]} h^{(\alpha_k)}(rx_k-a_k+1)  - \prod_{k \in [d_X]} h^{(\alpha_k)}(rx_k'-a_k+1)| \\
& \qquad \qquad \qquad \qquad \qquad+ |\prod_{k \in [d_X]} h^{(\alpha_k)}(rx_k-b_k+1)  - \prod_{k \in [d_X]} h^{(\alpha_k)}(rx_k'-b_k+1)| \bigg].
\end{align*}

We can further bound the term within the square brackets. We will find two upper bounds and use the minimum between the two. Firstly we have 
\begin{align*}
|& \prod_{k \in [d_X]} h^{(\alpha_k)}(rx_k-a_k+1)  - \prod_{k \in [d_X]} h^{(\alpha_k)}(rx_k'-a_k+1)|  \\
& \qquad + |\prod_{k \in [d_X]} h^{(\alpha_k)}(rx_k-b_k+1)  - \prod_{k \in [d_X]} h^{(\alpha_k)}(rx_k'-b_k+1)| \\
\leq &4 \prod_{k \in [d_X]} \|h^{(\alpha_k)}\|_{\infty} := \mu_1.
\end{align*}

Secondly, using the identity $|f(x) - f(x')| \leq \sup_{y} \|\nabla f(y)\|_2 \|x-x'\|_2$, where $\nabla$ is the gradient and we take $f(x) = \prod_{k \in [d_X]} h^{(\alpha_k)}(rx_k - i_k + 1)$, we have another upper bound. Here we have
\begin{align*}
& \sup_{y} \|\nabla f(y)\|_2 \\
= & \sup_{y} \sqrt{\sum_{l \in [d_X]} \left(rh^{(\alpha_l + 1)}(rx_l - i_l + 1) \prod_{k \in [d_X], k \neq l} h^{(\alpha_k)}(rx_k - i_k + 1) \right)^2} \\
\leq & r  \sqrt{\sum_{l \in [d_X]} \left(\|h^{(\alpha_l+1)}\|_{\infty} \prod_{k \in [d_X], k \neq l} \|h^{(\alpha_k)}\|_{\infty} \right)^2} \\
\leq & r \sqrt{d_X}   \prod_{k \in [d_X]} \left[ \|h^{(\alpha_k+1)}\|_{\infty} \vee \|h^{(\alpha_k)}\|_{\infty}  \right].
\end{align*}
This identity gives us the upper bound
\begin{align*}
|& \prod_{k \in [d_X]} h^{(\alpha_k)}(rx_k-a_k+1)  - \prod_{k \in [d_X]} h^{(\alpha_k)}(rx_k'-a_k+1)| \\
& \qquad + |\prod_{k \in [d_X]} h^{(\alpha_k)}(rx_k-b_k+1)  - \prod_{k \in [d_X]} h^{(\alpha_k)}(rx_k'-b_k+1)| \\
\leq & 2 r \sqrt{d_X}   \prod_{k \in [d_X]} \left[ \|h^{(\alpha_k+1)}\|_{\infty} \vee \|h^{(\alpha_k)}\|_{\infty}  \right] \|x-x'\|_2 := \mu_2 r \|x-x'\|_2.
\end{align*}
Taking the minimum of these two upper bounds gives a tighter upper bound. Let $\wedge$ denote the minimum between two terms and $\vee$ the maximum. Using the properties $(ab \wedge cd) \leq (a \vee c)(b \wedge d), a,b,c,d > 0$ and $(1 \wedge u) \leq u^\alpha$ for $u > 0, 0 < \alpha \leq 1$,  we have
\begin{align}\label{proof:lower:holder:eq}
& |D^{\alpha}p^{\Delta}_{X|Z}(x|z) - D^{\alpha}p^{\Delta}_{X|Z}(x'|z)| \nonumber \\
\leq & \rho^d r^{d_X/2} m^{d_Z/2} r^{\ell}  \|h\|_{\infty}^{d_Z} \big[ \mu_1 \wedge  (r \mu_2 \|x-x'\|_2) \big] \nonumber \\
\leq & \rho^d r^{d_X/2} m^{d_Z/2} r^{\ell}  \|h\|_{\infty}^{d_Z} (\mu_1 \vee \mu_2) (1 \wedge r \|x-x'\|_2) \nonumber \\
\leq & \rho^d r^{d_X/2} m^{d_Z/2} r^{\ell}  \|h\|_{\infty}^{d_Z} (\mu_1 \vee \mu_2) r^{\beta - \ell} \|x-x'\|_2^{\beta - \ell} \nonumber \\
\leq &  \holdercon\|x-x'\|_2^{\beta - \ell},
\end{align}
provided we ensure that $\rho^d r^{d_X/2} m^{d_Z/2} r^{\beta} (\mu_1 \vee \mu_2) \leq \holdercon$, which is indeed the case. So $p^{\Delta}_{X|Z}(x|z)$ satisfies the H\"older smoothness condition.

\paragraph{Proof of Claim (3): } Now we show that $p^{\Delta}_{X|Z}(x|z)$ also satisfies the TV smoothness assumption. We have
\begin{align}
\label{eqn:orig}
& \int |p^{\Delta}_{X|Z}(x|z) - p^{\Delta}_{X|Z}(x|z')|dx \leq  \int \sum_{\bar{i}} \sum_{\bar{j}}  \bigg | \prod_{k \in [d_X]} h_{i_k,r}(x_k) \bigg | \bigg |\prod_{k \in [d_Z]} h_{j_k,m}(z_k) - \prod_{k \in [d_Z]} h_{j_k,m}(z_k') \bigg | dx.
\end{align}
Recall that $\prod_{k \in [d_Z]} h_{j_k,m}(z_k)$ have disjoint supports, so there are at most two non-zero terms within the summation. Suppose $\forall k \in [d_Z], z_k \in [\frac{j_k^*-1}{m}, \frac{j_k^*}{m}]$ for some specific $j_k^*$ while $\forall k \in [d_Z], z_k' \in [\frac{j_k'^*-1}{m}, \frac{j_k'^*}{m}]$ for some specific $j_k'^*$, corresponding to the two non-zero terms. Then
\begin{align*}
& \sum_{\bar{j}} \bigg | \prod_{k \in [d_Z]} h_{j_k,m}(z_k) - \prod_{k \in [d_Z]} h_{j_k,m}(z_k') \bigg | \\
\leq & \bigg| \prod_{k \in [d_Z]} h_{j_k^*,m}(z_k) - \prod_{k \in [d_Z]} h_{j_k^*,m}(z_k') \bigg | + \bigg| \prod_{k \in [d_Z]} h_{j_k'^*,m}(z_k) - \prod_{k \in [d_Z]} h_{j_k'^*,m}(z_k') \bigg |.
\end{align*}
We upper bound the first term and note that an identical upper bound holds for the second term. 
Using a similar approach to how we showed H\"older smoothness in \eqref{proof:lower:holder:eq}, and by telescoping, we have
\begin{align*}
& \bigg| \prod_{k \in [d_Z]} h_{j_k^*,m}(z_k) - \prod_{k \in [d_Z]} h_{j_k^*,m}(z_k') \bigg | \\
\leq & \sum_{k \in [d_Z]} (\sqrt{m}\rho)^{d_Z -1} \|h\|_{\infty}^{d_Z - 1} |h_{j_k^*,m}(z_k) - h_{j_k^*,m}(z_k')| \\
\leq & \sum_{k \in [d_Z]} (\sqrt{m}\rho)^{d_Z -1} \|h\|_{\infty}^{d_Z - 1} \rho \sqrt{m} \left( 2 \|h\|_{\infty} \wedge \|h'\|_{\infty} m |z_k - z_k'| \right) \\
\leq & \sum_{k \in [d_Z]} (\sqrt{m}\rho)^{d_Z -1} \|h\|_{\infty}^{d_Z - 1} \rho \sqrt{m} \left( 2 \|h\|_{\infty} \vee \|h'\|_{\infty} \right) \left( 1 \wedge m |z_k - z_k'| \right) \\
\leq & \sum_{k \in [d_Z]} (\sqrt{m}\rho)^{d_Z -1} \|h\|_{\infty}^{d_Z - 1} \rho \sqrt{m} \left( 2 \|h\|_{\infty} \vee \|h'\|_{\infty} \right) m^{\gamma} |z_k - z_k'|^{\gamma} \\
\leq & \rho^{d_Z} m^{\frac{d_Z}{2}} m^{\gamma} \|h\|_{\infty}^{d_Z - 1}  \left( 2 \|h\|_{\infty} \vee \|h'\|_{\infty} \right)  \|z - z'\|_1^{\gamma}.
\end{align*}

Substituting this result back in~\eqref{eqn:orig} gives
\begin{align*}
& \int |p^{\Delta}_{X|Z}(x|z) - p^{\Delta}_{X|Z}(x|z')|dx \\
= & \sum_{\bar{j}} \bigg |\prod_{k \in [d_Z]} h_{j_k,m}(z_k) - \prod_{k \in [d_Z]} h_{j_k,m}(z_k') \bigg | \sum_{\bar{i}} \int_0^1 \bigg | \prod_{k \in [d_X]} h_{i_k,r}(x_k) \bigg | dx \\
\leq & 2 \rho^{d_Z} m^{\frac{d_Z}{2}} m^{\gamma} \|h\|_{\infty}^{d_Z - 1}  \left( 2 \|h\|_{\infty} \vee \|h'\|_{\infty} \right)  \|z - z'\|_1^{\gamma}  \sum_{\bar{i}} \rho^{d_X} r^{\frac{-d_X}{2}}\prod_{k \in [d_X]} \left( \int_0^1 |h(u)| du \right)\\
= & 2 \rho^d r^{\frac{d_X}{2}} m^{\frac{d_Z}{2}} m^{\gamma} \|h\|_{\infty}^{d_Z - 1}  \left( 2 \|h\|_{\infty} \vee \|h'\|_{\infty} \right)  \|z - z'\|_1^{\gamma} \\
\leq & \tvcon \|z - z'\|_1^{\gamma}
\end{align*}
provided we ensure that $2 \rho^d r^{\frac{d_X}{2}} m^{\frac{d_Z}{2}} m^{\gamma} \|h\|_{\infty}^{d_Z - 1}  \left( 2 \|h\|_{\infty} \vee \|h'\|_{\infty} \right) \leq \tvcon$. This is indeed the case, and we obtain that $p^{\Delta}_{X|Z}(x|z)$ indeed satisfies the TV smoothness assumption.

\subsubsection{Proof of Lemma~\ref{lem:kl}}
We prove each of the two claims in turn.

\paragraph{Proof of Claim (1): } Recall that in constructing $p^\Delta$ and $p^{\Delta'}$ we do not perturb the marginal distribution over $Z$. As a consequence the KL divergence between $p^{\Delta}$ and $p^{\Delta'}$ can 
be written as: 
\begin{align*}
d_{\text{KL}}(p^{\Delta}, p^{\Delta'}) = \mathbb{E}_z d_{\text{KL}}(p^{\Delta}_{X|Z}(x | z), p^{\Delta'}_{X|Z}(x|z)),
\end{align*}
and we focus on upper bounding the KL divergence between the conditional densities. 
\begin{align*}
d_{\text{KL}}(p^{\Delta}_{X|Z}(x|z), p^{\Delta'}_{X|Z}(x|z)) \leq & d_{\chi^2}(p^{\Delta}_{X|Z}(x|z), p^{\Delta'}_{X|Z}(x|z)) \\
= & \int p^{\Delta'}_{X|Z}(x|z) \left( \frac{p^{\Delta}_{X|Z}(x|z)}{p^{\Delta'}_{X|Z}(x|z)} - 1 \right)^2 dx \\
= & \int \frac{(p^{\Delta}_{X|Z}(x|z) - p^{\Delta'}_{X|Z}(x|z))^2}{p^{\Delta'}_{X|Z}(x|z)} dx.
\end{align*}
Recall that by the condition in~\eqref{eqn:c1} we have that 
$|  \Delta_{\bar{i},\bar{j}}  \prod_{k \in [d_X]} h_{i_k,r}(x_k) \prod_{k \in [d_Z]} h_{j_k,m}(z_k) | \leq \rho^d r^{d_X/2} m^{d_Z/2} \|h\|_{\infty}^d \leq \frac{1}{2}$ which implies $p^{\Delta'}_{X|Z}(x|z) \geq \frac{1}{2}$. So we have
\begin{align*}
& d_{\text{KL}}(p^{\Delta}_{X|Z}(x|z), p^{\Delta'}_{X|Z}(x|z)) \\
\leq & 2 \int \left( \sum_{\bar{i}} \sum_{\bar{j}} (\Delta_{\bar{i},\bar{j}} - \Delta_{\bar{i},\bar{j}}') \prod_{k \in [d_X]} h_{i_k,r}(x_k) \prod_{k \in [d_Z]} h_{j_k,m}(z_k) \right)^2 dx \\
\stackrel{\text{(i)}}{=} & 2  \sum_{\bar{i}} \sum_{\bar{j}}  (\Delta_{\bar{i},\bar{j}} - \Delta_{\bar{i},\bar{j}}')^2  \prod_{k \in [d_Z]} h^2_{j_k,m}(z_k) \int \prod_{k \in [d_X]} h^2_{i_k,r}(x_k) dx, \\
\leq & 8 \rho^{2d_X} r^{d_X}  \sum_{\bar{j}} \prod_{k \in [d_Z]} h^2_{j_k,m}(z_k),
\end{align*}
where for (i) we note that the cross terms in expanding the square correspond to disjoint bumps and are 0. 
As a consequence we obtain that,
\begin{align*}
d_{\text{KL}}(p^{\Delta}, p^{\Delta'}) &\leq 8 \rho^{2d_X} r^{d_X}  \int \sum_{\bar{j}}  \prod_{k \in [d_Z]} h^2_{j_k,m}(z_k) p_Z(z) dz \\
& \leq 8 \rho^{2d_X} r^{d_X} (\rho \sqrt{m})^{2 d_Z} \| h \|_{\infty} \times \\
&~~~~~~~~~~~~~~~~~~\sum_{\bar{j}} \bigg[\prod_{k \in [d_Z]} \mathbbm{1}\bigg(z_k \in \big[\frac{\bar{j}_k-1}{m}, \frac{\bar{j}_k}{m}\big]\bigg) \bigg] p_Z \bigg( \big[\frac{\bar{j}_1-1}{m}, \frac{\bar{j}_1}{m}\big] \times 
\cdots \times \big[\frac{\bar{j}_{d_Z}-1}{m}, \frac{\bar{j}_{d_Z}}{m}\big]\bigg)\\
& = 8\| h \|_{\infty} \rho^{2d} r^{d_X} m^{d_Z} .
\end{align*}



\paragraph{Proof of Claim (2): } 
Given that the marginal density of $Z$ is lower bounded i.e. $p_{Z}(z) \geq \denslower > 0$, it suffices to instead ensure that 
for some absolute constant $C > 0$ we have that,
\begin{align*}
\int \int |p^{\Delta}_{X|Z}(x|z) - p^{\Delta'}_{X|Z}(x|z)|dxdz \geq C \rho^d  r^{d_X/2} m^{d_Z/2}.
\end{align*}
Using the Varshamov-Gilbert construction \cite[Lemma 2.9]{tsybakov09introduction} we know that there exist $N = 2^{r^{d_X} m^{d_Z}/8}$ vectors $\Delta$ on the hypercube $\{\pm 1 \}^{r^{d_X} m^{d_Z}}$ such that $d_H(\Delta, \Delta') \geq r^{d_X} m^{d_Z}/8$ for each $\Delta, \Delta'$ in that set, where $d_H(\Delta, \Delta') = \frac{1}{2} \sum_{\bar{i}} \sum_{\bar{j}} |\Delta_{\bar{i},\bar{j}} - \Delta'_{\bar{i},\bar{j}}|$ is the Hamming distance. Then
\begin{align*}
& \int \int |p^{\Delta}_{X|Z}(x|z) - p^{\Delta'}_{X|Z}(x|z)|dxdz \\
= & \sum_{\bar{i}} \sum_{\bar{j}} |\Delta_{\bar{i},\bar{j}} - \Delta'_{\bar{i},\bar{j}}| \int \int \bigg |\prod_{k \in [d_X]} h_{i_k,r}(x_k) \prod_{k \in [d_Z]} h_{j_k,m}(z_k) \bigg| dxdz \\
\geq &  \frac{r^{d_X} m^{d_Z}}{4} \frac{\rho^d}{\sqrt{r^{d_X} m^{d_Z}}} \left(\int_0^1 |h(u)|du\right)^d \\
\geq & C \rho^d \sqrt{r^{d_X} m^{d_Z}}
\end{align*}
as claimed, where $C = \|h\|_1^d/4$.

%
%
%
%

\subsection{Proof of Theorem~\ref{thm:yatracos}}
We denote by $\Delta_1, \Delta_2$ the following,
\begin{align*}
\Delta_1 &= \sup_{A \in \mathcal{A}} \left|\mathbb{P}_n(A) - \int_A p\right| , \\
\Delta_2 &= \sup_{A \in \mathcal{A}} \sup_{j \in [N]} \left|\int_A\widetilde{f}_j - \frac{1}{n} \sum_{i=1}^n \int_{A^{Z_i}} \widehat{f}_j(x | Z_i) dx\right|. 
\end{align*}
The following lemma is a simple consequence of Hoeffding's inequality, and gives high-probability upper bounds on the above quantities:
\begin{lemma}
\label{lem:hoef}
With probability at least $1 - \delta$,
\begin{align*}
\Delta_1 &\leq \sqrt{\frac{\log (2N/\delta)}{n}}\\
\Delta_2 &\leq \sqrt{\frac{3 \log (2N/\delta)}{2n}}.
\end{align*}
\end{lemma}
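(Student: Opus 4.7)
The plan is to reduce both statements to Hoeffding's inequality applied to bounded $[0,1]$-valued i.i.d.\ empirical averages, followed by a union bound over the relevant finite index sets. I first note that since Yatracos sets are indexed by pairs of candidate estimates, $|\mathcal{A}| \leq N^2$.

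For the bound on $\Delta_1$, I would fix any $A \in \mathcal{A}$ and observe that $\mathbb{P}_n(A) = n^{-1}\sum_{i=1}^n \mathbbm{1}\{(X_i,Z_i) \in A\}$ is the empirical mean of $n$ i.i.d.\ $\{0,1\}$-valued random variables with expectation $\int_A p$. Hoeffding's inequality yields $\mathbb{P}(|\mathbb{P}_n(A) - \int_A p| \geq t) \leq 2\exp(-2nt^2)$. A union bound over the $|\mathcal{A}| \leq N^2$ Yatracos sets, with aggregate failure probability $\delta/2$, gives $\Delta_1 \leq \sqrt{\log(4N^2/\delta)/(2n)} \leq \sqrt{\log(2N/\delta)/n}$, where the final inequality uses $\log(4N^2/\delta) \leq 2\log(2N/\delta)$ for $\delta \leq 1$.

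For the bound on $\Delta_2$, I would fix a pair $(A,j) \in \mathcal{A} \times [N]$ and define $g_{A,j}(Z_i) := \int_{A^{Z_i}} \widehat{f}_j(x \mid Z_i)\,dx$. Because the candidate estimates are taken to be the truncated, renormalized estimates from Lemma~\ref{lemma:ptilde:density}, each $\widehat{f}_j(\cdot \mid z)$ is a proper density in $x$, so $g_{A,j}(Z_i) \in [0,1]$. By Fubini,
\begin{align*}
\mathbb{E}[g_{A,j}(Z_i)] \;=\; \int \int_{A^z} \widehat{f}_j(x \mid z)\,dx\, p_Z(z)\,dz \;=\; \int_A \widetilde{f}_j.
\end{align*}
Hoeffding's inequality again yields a sub-Gaussian tail, and a union bound over the $|\mathcal{A}| \cdot N \leq N^3$ pairs with aggregate failure probability $\delta/2$ gives $\Delta_2 \leq \sqrt{\log(4N^3/\delta)/(2n)} \leq \sqrt{3\log(2N/\delta)/(2n)}$, using $\log(4N^3/\delta) \leq 3\log(2N/\delta)$ for $\delta \leq 1$.

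A final union bound combines the two events so that both inequalities hold simultaneously with probability at least $1-\delta$. There is no serious obstacle: the only care required is constant-factor bookkeeping in the union bounds, and noting that working with proper-density estimates is what guarantees $g_{A,j}(Z_i) \in [0,1]$, which is the boundedness hypothesis Hoeffding requires in the $\Delta_2$ step.
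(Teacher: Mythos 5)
Your proof is correct and follows essentially the same approach as the paper: Hoeffding's inequality applied to bounded $[0,1]$-valued empirical averages, the Fubini identity for the population term in $\Delta_2$, and a union bound over $|\mathcal{A}| \leq N^2$ (for $\Delta_1$) and $|\mathcal{A}|\cdot N \leq N^3$ pairs (for $\Delta_2$). The paper presents this only as a brief sketch; you have simply filled in the constant-factor bookkeeping, which is consistent with the stated constants $\sqrt{1}$ and $\sqrt{3/2}$.
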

\noindent Taking this result as given we complete the proof, before returning to prove it.
Let us denote by $\widecheck{f}$ the minimizer $\argmin_{\widehat{f}_j: j \in [N]} \int_z \|\widehat{f}_{j} - p(x|z)\|_1 p_Z(z) dz$, then we can write:
\begin{align*}
\int_z \|\psi(x|z) - p(x|z)\|_1 p_Z(z) dz \leq \underbrace{\int_z \|\psi(x|z) - \widecheck{f}(x|z)\|_1 p_Z(z) dz}_T + \int_z \|\widecheck{f}(x|z)- p(x|z)\|_1 p_Z(z) dz.
\end{align*}
Abusing notation slightly in the remainder of the proof we identify $\widecheck{f}$ and $\psi$ with their corresponding oracle joint densities $\widecheck{f}(x|z)p_Z(z)$ and $\psi(x|z)p_Z(z)$.
We note that,
\begin{align*}
T &\leq 2 \sup_{A \in \mathcal{A}} \left| \int_A \psi - \int_A \widecheck{f} \right| \\
&\leq  2 \sup_{A \in \mathcal{A}} \left[|\int_A \psi - \mathbb{P}_n(A) | + | \int_A \widecheck{f} - \mathbb{P}_n(A)| \right]\\
&\leq 2 \sup_{A \in \mathcal{A}}  \left[ \left| \frac{1}{n} \sum_{i=1}^n \int_{A^{Z_i}} \psi(x | Z_i) dx - \mathbb{P}_n(A) \right| + \left|  \frac{1}{n} \sum_{i=1}^n \int_{A^{Z_i}} \psi(x | Z_i) dx  - \int_A \psi \right| \right.\\
&\left. +  \left| \frac{1}{n} \sum_{i=1}^n \int_{A^{Z_i}} \widecheck{f}(x | Z_i) dx - \mathbb{P}_n(A) \right| +  \left|  \frac{1}{n} \sum_{i=1}^n \int_{A^{Z_i}} \widecheck{f}(x | Z_i) dx  - \int_A \widecheck{f} \right|\right] \\
&\leq 4 \Delta_2 +  4 \sup_{A \in \mathcal{A}}  \left|   \frac{1}{n} \sum_{i=1}^n \int_{A^{Z_i}} \widecheck{f}(x | Z_i) dx - \mathbb{P}_n(A) \right|,
\end{align*}
where in the final inequality we use the definition of the minimum distance estimator, and of $\Delta_2$. We then note that,
\begin{align*}
4 \sup_{A \in \mathcal{A}}  \left|  \frac{1}{n} \sum_{i=1}^n \int_{A^{Z_i}} \widecheck{f}(x | Z_i) dx - \mathbb{P}_n(A) \right| &\leq 4 \Delta_1 + 4 \sup_{A \in \mathcal{A}} \left|  \frac{1}{n} \sum_{i=1}^n \int_{A^{Z_i}} \widecheck{f}(x | Z_i) dx - \int_A p \right| \\
&\leq 4\Delta_1 + 4\Delta_2 + 4 \sup_{A \in \mathcal{A}} \left| \widecheck{f}(A)- \int_A p \right| \\
&\leq 4 \Delta_1 + 4\Delta_2 + 2  \int_z \|\widecheck{f}(x|z)- p(x|z)\|_1 p_Z(z) dz,
\end{align*}
and putting these bounds together with the bounds in Lemma~\ref{lem:hoef} we obtain our claimed result.

\paragraph{Proof of Lemma~\ref{lem:hoef}: } We note that for fixed $A \in \mathcal{A}$ (and a fixed index $j \in [N]$) we are simply bounding the deviation of a sum of bounded (by 1) random variables from their mean. 
This is straightforward for the terms appearing in the definition of $\Delta_1$. For the terms appearing in $\Delta_2$ we note that,
\begin{align*}
\widetilde{f}_j(A) =  \int_{A} \widetilde{f}(x|z) p_Z(z) dx dz = \mathbb{E}_Z \left[ \int_{A^Z} f(x|Z) dx\right].
\end{align*}
The result then follows 
by combining the Hoeffding bound with the union bound, noting that $\mathcal{A}$ has cardinality at most $N^2$.

\subsection{Proof of Theorem~\ref{thm:adaptive}}
\label{sec:adaptive_proof}
The proof is a straightforward application of Theorem~\ref{thm:yatracos}. Let us denote $\mathcal{D}_1$ the half of the sample on which we construct our density estimates and $\mathcal{D}_2$ the half on which we run the selection procedure. 
By Theorem~\ref{thm:yatracos}, setting $\delta = 1/n$, we obtain that conditioning on the first half of the sample, with probability at least $1 - 1/n$ we select $\psi$ such that,
\begin{align*}
\int_z \|\psi(x|z) - p(x|z)\|_1 p_Z(z) dz \lesssim  \min_{j \in [N]} \int_z \|\widehat{f}_{j} - p(x|z)\|_1 p_Z(z) dz+  \sqrt{ \frac{\log n}{n}}.
\end{align*}
Let us denote by $E$ the event on which this guarantee holds, and denote by $j^* \in [N]$ a density estimate constructed with (nearly) optimal choices of the tuning parameters.
The expected error (the expectation taken over all samples), can be bounded as:
\begin{align*}
\mathbb{E} \left[\int_z \|\psi(x|z) - p(x|z)\|_1 p_Z(z) dz\right] \leq \mathbb{E}_{\mathcal{D}_1, \mathcal{D}_2}  \left[\int_z \|\psi(x|z) - p(x|z)\|_1 p_Z(z) dz | E\right] + \frac{2}{n},
\end{align*}
by noting that the error is always atmost $2$ since both $\psi$ and $p$ are valid densities (and the $L_1$-loss is upper bounded by 2 for densities). Finally, we note that,
\begin{align*}
\mathbb{E} \left[\int_z \|\psi(x|z) - p(x|z)\|_1 p_Z(z) dz\right] &\lesssim \mathbb{E}_{\mathcal{D}_1}  \left[ \min_{j \in [N]} \int_z \|\widehat{f}_{j} - p(x|z)\|_1 p_Z(z) dz+  \sqrt{ \frac{\log n}{n}} \right] + \frac{2}{n}, \\
&\lesssim  \mathbb{E}_{\mathcal{D}_1}  \left[\int_z \|\widehat{f}_{j^*} - p(x|z)\|_1 p_Z(z) dz\right] + \sqrt{ \frac{\log n}{n}}, \\
&\lesssim n^{\frac{-1}{\beta^{-1} d_X + \gamma^{-1} d_Z + 2}} +  \sqrt{ \frac{\log n}{n}} \\
&\lesssim n^{\frac{-1}{\beta^{-1} d_X + \gamma^{-1} d_Z + 2}},
\end{align*}
where the last inequality follows by noting that for any finite $\beta$ or $\gamma$ the rate of conditional density estimation is strictly slower than $\mathcal{O}(\sqrt{(\log n)/n})$.

%

\section{Discussion}

In this paper we looked at the problem of conditional density estimation under a weighted absolute value loss function. We first demonstrated that if one imposes smoothness only on the conditional densities $p_{X|Z}(x|z)$ with respect to $x$, conditional density estimation is impossible in a minimax sense regardless of the marginal density $p_Z$ (which may even be known to the statistician). We then derived the minimax rate of estimation and showed an adaptive estimator which achieves the rate without knowledge of the smoothness parameters.

An interesting question that we intend to investigate in our future work is to generalize our results to an $L_p$ loss function:
\begin{align*}
\int |\hat p_{X|Z}(x|z) - p_{X|Z}(x|z)|^p p_Z(z) dz,
\end{align*} 
for some $p \geq 1$. We anticipate that such a modification will require a smoothness assumption stronger than TV smoothness. It will be interesting to see whether one can show that TV smoothness is not sufficient to analyze the $L_p$ loss function for $p > 1$. In addition we are interested in quantifying higher order TV smoothness and studying the problem of conditional density estimation for such densities.

\section{Acknowledgements}
The authors are grateful to Larry Wasserman for helpful discussions. SB was partially supported by NSF grants DMS-1713003 and CCF-1763734.

\bibliographystyle{unsrtnat}
\bibliography{michaelsbibfile}

\newpage
\appendix

\section{Additional Technical Results}
\subsection{Proof of Lemma \ref{lemma:p1p2}}
We begin by verifying the first and second claims. Notice that $p_1$ belongs to the H\"older class with any smoothness value $\beta$. For the $\beta \leq 1$ cases, $p_1$ is H\"older smooth with constant $\frac{2(1 - c)}{d_X^{\beta}}$; furthermore linear functions are H\"older smooth for any $\beta > 1$ and any $\holdercon > 0$.
Now we consider $p_2$. 
We note that $p_1 \leq 2$. It immediately follows that if we define $p_2 = 2 - p_1$ then $p_2$ is a valid density in the same H\"older class as $p_1$, and  $\frac{1}{2} p_1 + \frac{1}{2} p_2 = U([0,1]^{d_X})$.

Finally we examine the TV distance
\begin{align*}
\TV(U([0,1]^{d_X}), p_1) = \TV(U([0,1]^{d_X}), p_2) = \frac{1}{2} \int \left| 1 - \left( \alpha \sum_{i \in [d_x]} \frac{x_i}{d_X} + c \right) \right| dx.
\end{align*}
We will lower bound this from below:
\begin{align*}
\TV(U([0,1]^{d_X}), p_1) 
& \geq \frac{1}{2} \int 3 \left( \frac{\left| 1 - \left( \alpha \left(\sum_{i \in [d_x]} \frac{x_i}{d_X}\right) + c \right) \right|}{3}\right)^{2} dx\\
& =  \frac{1}{6} \int \left( 1 - 2 \alpha \sum_{i \in [d_x]} \frac{x_i}{d_X} - 2c + \alpha^2 \left(\sum_{i \in [d_x]} \frac{x_i}{d_X}\right)^2 + 2\alpha c \sum_{i \in [d_x]} \frac{x_i}{d_X} + c^2 \right) dx \\
& = \frac{1}{6} \left( (1 - c)^2 - (1 - c) \alpha + \alpha^2 (\frac{1}{3d_X} + \frac{d_X - 1}{4d_X}) \right),
\end{align*}
where the first inequality holds since $\frac{\left| 1 - \left( \alpha \left(\sum_{i \in [d_x]} \frac{x_i}{d_X}\right) + c \right) \right|}{3} \leq 1$, and we used $\int \sum_{i \in [d_X]} \frac{x_i}{d_X} dx = \frac{1}{2}$ and $\int (\sum_{i \in [d_X]} \frac{x_i}{d_X})^2 dx = \frac{\int x_1^2 d x_1}{d_X} + \frac{d_X - 1}{d_X} \int x_1 d x_1\int x_2 d x_2 = \frac{1}{3 d_X} + \frac{d_X - 1}{4d_X}$. Substituting in $\alpha = 2(1-c)$ from previous observations we get
\begin{align*}
\TV(U([0,1]^{d_X}), p_1) & \geq  \frac{1}{6} \left( (1 - c)^2 - 2(1 - c)^2 +  4(1 - c)^2 (\frac{1}{3d_X} + \frac{d_X - 1}{4d_X}) \right) \\
& = \frac{1}{6} \left( (1 - c)^2 \left( \frac{4}{3d_X} + \frac{d_X - 1}{d_X} - 1 \right) \right) \\
& = \frac{(1 - c)^2}{18d_X}\\
& \geq 0.
\end{align*}

\section{Proofs of Section 3} \label{app:upper}

\begin{proof}[Proof of Lemma \ref{kernel:construct}]

It is easy to see that the construction in \cite[Proposition 1.3]{tsybakov09introduction} provides kernels satisfying 
\begin{align*}
\int |K_i(u)|| |u|^{\kappa} du < \infty,
\end{align*}
for any fixed $\kappa \geq 0$. This is so since by construction $K_i(u)$ are Legendre polynomials supported on $[-1,1]$. In addition they also satisfy $\int K_i^2(u) du < \infty$. Furthermore since each kernel is of order $\ell$ we have that for any non-negative integer $k \leq \ell$: $\int K_i(u) u^{k} du = 0 $.  The statement of the lemma follows immediately by combining these three properties. 
%

\end{proof}

\begin{proof}[Proof of Lemma \ref{lemma:ptilde:density}]
Suppose that
\begin{align*}
\int |p_{X|Z}(x|z) - \hat{p}_{X|Z}(x|z)| dx \leq \epsilon_n(z),
\end{align*}
and $\hat{p}_{X|Z = z} \not \equiv 0$. Now consider the set $S = \Set{x}{\hat{p}_{X|Z}(x|z) \geq 0}$. Observe that
\begin{align*}
\int_S |p_{X|Z}(x|z) - \hat{p}_{X|Z}(x|z)| dx \leq \int |p_{X|Z}(x|z) - \hat{p}_{X|Z}(x|z)| dx \leq \epsilon_n(z).
\end{align*}
Since on the set $S^c$ we have $|p_{X|Z}(x|z) - \hat{p}_{X|Z}(x|z)| = p_{X|Z}(x|z) - \hat{p}_{X|Z}(x|z)$, and we know $p_{X|Z}(x|z) \geq 0$, we can conclude by the above inequality that
\begin{align*}
\int_{S^c} - \hat{p}_{X|Z}(x|z) dx \leq \epsilon_n(z).
\end{align*}
In addition, since $\int \hat{p}_{X|Z}(x|z) dx = 1$, we have
\begin{align*}
C = \int (\hat p_{X|Z}(x |z))_+ d x \geq 1 - \epsilon_n(z).
\end{align*}
By the triangle inequality we also have
\begin{align*}
C \leq \int |\hat p_{X|Z}(x |z)| d x \leq \int |\hat p_{X|Z}(x|z) - p_{X|Z}(x|z)| d x + 1 \leq 1 + \epsilon_n(z).
\end{align*}
Finally, we know the following holds
\begin{align*}
\int |p_{X|Z}(x|z) - (\hat{p}_{X|Z}(x|z))_+ | dx \leq \int |p_{X|Z}(x|z) - \hat{p}_{X|Z}(x|z)| dx \leq \epsilon_n(z).
\end{align*}
Combining the above results gives us
\begin{align*}
\int |p_{X|Z}(x|z) - C^{-1}(\hat{p}_{X|Z}(x|z))_+ | dx & \leq \int |p_{X|Z}(x|z) - (\hat{p}_{X|Z}(x|z))_+ | dx + \int \frac{|1 - C|}{C} (\hat{p}_{X|Z}(x|z))_+ | dx \\
& \leq \epsilon_n(z) + |1 - C| \\
& \leq 2 \epsilon_n(z).
\end{align*}
Next, notice that when $\hat p_{X|Z = z} \equiv 0$ then we have $\int |p_{X|Z}(x|z) - \hat{p}_{X|Z}(x|z)| dx = 1$, whereas, 
\begin{align*}
\int |p_{X|Z}(x|z) - \bar{p}_{X|Z}(x|z)| dx \leq 2,
\end{align*}
hence the same bound as above applies. Finally integrating the bound over $z$ completes the proof.
\end{proof}

\subsection{Lemmas of Section \ref{proof:of:upper:bound}}\label{lemmas:sec3:appendix}
\begin{proof}[Proof of Lemma \ref{lagrange:multipliers:lemma}]
Let 
\begin{align*}
\sum_{\bar{j}} p_{\bar{j}} (1 - p_{\bar{j}})^n
\end{align*}
 be the objective function that we try to maximize. We will use the Karush-Kuhn-Tucker (KKT) conditions and we will subject the objective to the constraints $p_{\bar{j}} \geq 0$ for all $\bar{j}$ and $\sum_{\bar{j}} p_{\bar{j}} = 1$. We introduce the KKT multipliers $\lambda_{\bar{j}} \leq 0$ for all $\bar{j}$, and $\mu$ corresponding to the constraints respectively. Then taking the derivative with respect to some $\bar{j}$ we have the conditions
\begin{align*}
(1-p_{\bar{j}})^n - n p_{\bar{j}} (1 - p_{\bar{j}})^{n-1} - \lambda_{\bar{j}} p_{\bar{j}} + \mu = 0
\end{align*}
and by complementary slackness $\lambda_{\bar{j}} p_{\bar{j}} = 0$ for all $\bar{j}$. 

Let $S = \Set{\bar{j}}{ p_{\bar{j}} \neq 0}$, which means from the conditions that on the set $S$, all $\lambda_{\bar{j}} = 0$ and $(1-p_{\bar{j}})^n - n p_{\bar{j}} (1 - p_{\bar{j}})^{n-1} = - \mu$.  We can write this as $f(x) = (1-(n+1)x)(1-x)^{n-1}$ where $x = p_{\bar{j}}$. Clearly $f$ is decreasing on $[0,\frac{1}{n+1}]$ and $f(\frac{1}{n+1}) = 0$. Since $|S| \leq m^{d_Z} \ll  n+1$ (by our construction of $m$) and $\sum_{\bar{j}} p_{\bar{j}} = 1$, there exists $\bar{k} \in S$ such that $p_{\bar{k}} \geq \frac{1}{|S|} \geq \frac{1}{m^{d_Z}} \gg \frac{1}{n+1}$. But $(1-(n+1)p_{\bar{k}})(1-p_{\bar{k}})^{n-1} < 0$, so it follows that $\mu > 0$. Now observe $f'(x) =  n(1-x)^{n-2}((n+1)x - 2)$. This shows that on the interval $[\frac{1}{n+1},1]$, $f$ changes from decreasing to increasing exactly once, at the point $\frac{2}{n+1} > \frac{1}{n+1}$. This implies that the equations $f(x) = - \mu$ for some $\mu > 0$ and $x \in [\frac{1}{n+1}, 1]$ can have at most two solutions. 

Then simply divide $S = S_1 \cup S_2$, where all $p_{\bar{j}}$ on $S_1$ are equal to some $v$, and all $p_{\bar{j}}$ on $S_2$ are equal to some $w$, such that $|S_1|v + |S_2|w = 1, v,w \in [\frac{1}{n+1},1]$ and $f(v) = f(w) > 0$. Substituting this in, our objective function becomes
\begin{align*}
\sum_{\bar{j}} p_{\bar{j}} (1 - p_{\bar{j}})^n = |S_1| v(1-v)^{n} + |S_2| w(1-w)^{n}.
\end{align*}
But the function $(1-x)^n$ is convex, so by Jensen's inequality it follows that
\begin{align*}
|S_1| v(1-v)^{n} + |S_2| w(1-w)^{n} \leq (1 - |S_1|v^2 - |S_2|w^2)^n,
\end{align*}
which is maximized when we minimize $|S_1|v^2 + |S_2|w^2$ under the constraint $|S_1|v + |S_2|w = 1$. This is done by rearranging to get $v = \frac{1 - |S_2|w}{|S_1|}$, so the minimum is achieved at $v = w = \frac{1}{|S_1| + |S_2|}$. This result shows that $p_{\bar{j}}$ must have the same value of $\frac{1}{|S|}$ for all $\bar{j}$, which is what one would intuitively expect. Our objective function then becomes
\begin{align*}
\sum_{\bar{j}} p_{\bar{j}} (1 - p_{\bar{j}})^n = |S| \frac{1}{|S|} \bigg(1 - \frac{1}{|S|}\bigg)^n = \bigg(1 - \frac{1}{|S|}\bigg)^n.
\end{align*}
In order to maximize this, we want $|S|$ to be as large as possible, which in this case is $m^{d_Z}$. This completes the proof.
\end{proof}

\begin{lemma}\label{lemma:expected}
Under the same assumptions as Theorem \ref{upper:bound:holder:thm}, the estimator
\begin{align*}
\hat{p}_{X,\bar{j}}(x) = \frac{\sum_{i \in [n]} \mathbbm{1}(Z_i \in A_{\bar{j}}) K(\frac{X_i - x}{h})}{h^{d_X} \sum_{i \in [n]} \mathbbm{1} (Z_i \in A_{\bar{j}})}
\end{align*}
for some $\bar{j} \in [m]^{d_Z}$ has the expected value
\begin{align*}
\mathbb{E}[\hat{p}_{X,\bar{j}}(x)] = h^{-d_X} \mathbb{E} \bigg[ K\left( \frac{X-x}{h} \right) \bigg| Z \in A_{\bar{j}} \bigg] (1 - \mathbb{P}(Z \in A_{\bar{j}}^c)^n).
\end{align*}
\end{lemma}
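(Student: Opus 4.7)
The plan is to condition on the random count $N := \sum_{i \in [n]} \mathbbm{1}(Z_i \in A_{\bar j})$, since the denominator depends only on $N$ and the convention $0/0 = 0$ handles the event $\{N = 0\}$ cleanly. First I would write
\begin{align*}
\mathbb{E}[\hat p_{X,\bar j}(x)] = \sum_{k=0}^n \mathbb{E}[\hat p_{X,\bar j}(x) \mid N = k]\, \mathbb{P}(N = k),
\end{align*}
observing that the $k=0$ term is zero by the $0/0 = 0$ convention, so only $k \geq 1$ contributes.

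For a fixed $k \geq 1$, I would invoke exchangeability: conditional on $N = k$ and on which $k$ indices fall in $A_{\bar j}$, the corresponding $X_i$ values are i.i.d.\ from the conditional law of $X \mid Z \in A_{\bar j}$ (this uses that the samples $(X_i, Z_i)$ are i.i.d.\ from $p_{X,Z}$). Hence
\begin{align*}
\mathbb{E}[\hat p_{X,\bar j}(x) \mid N = k] = \frac{1}{h^{d_X} k}\sum_{i=1}^{k}\mathbb{E}\!\left[ K\!\left(\tfrac{X - x}{h}\right) \,\Big|\, Z \in A_{\bar j} \right] = \frac{1}{h^{d_X}}\mathbb{E}\!\left[ K\!\left(\tfrac{X - x}{h}\right) \,\Big|\, Z \in A_{\bar j} \right],
\end{align*}
which crucially does not depend on $k$.

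Plugging this back in, the $k$-independent factor pulls out of the sum and I am left with $\sum_{k=1}^n \mathbb{P}(N = k) = 1 - \mathbb{P}(N = 0) = 1 - \mathbb{P}(Z \in A_{\bar j}^c)^n$, since $N \sim \mathrm{Binomial}(n, \mathbb{P}(Z \in A_{\bar j}))$. Combining yields the claimed identity. There is no real obstacle here — the only subtle point is justifying that, conditional on $\{N = k\}$ with $k \geq 1$, the $k$ samples with $Z_i \in A_{\bar j}$ are distributed as i.i.d.\ draws from $p_{X,Z}(\cdot, \cdot \mid Z \in A_{\bar j})$; this follows from the i.i.d.\ assumption on $\{(X_i,Z_i)\}$ together with symmetry over which subset of indices equals the conditioning event.
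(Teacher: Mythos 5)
Your proposal is correct and is essentially the paper's argument: the paper likewise applies the law of total expectation by conditioning on the (random) set $S$ of indices with $Z_i \in A_{\bar j}$, observes that the conditional expectation equals $h^{-d_X}\mathbb{E}[K((X-x)/h)\mid Z\in A_{\bar j}]$ whenever $S\neq\emptyset$ and $0$ otherwise, and sums the probabilities of nonempty $S$ to get $1-\mathbb{P}(Z\in A_{\bar j}^c)^n$. Conditioning on the count $N=|S|$ rather than on $S$ itself is a cosmetic reorganization of the same idea.
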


\begin{proof}[Proof of Lemma \ref{lemma:expected}]
Using the law of total expectation we have
\begin{align*}
\mathbb{E}[\hat{p}_{X,\bar{j}}(x)]  = \sum_{S \in 2^{[n]}} \mathbb{E} [\hat{p}_{X,\bar{j}}(x) | Z_i \in A_{\bar{j}}, i \in S, Z_i \in A_{\bar{j}}^c, i \in S^c] \mathbb{P}(Z \in A_{\bar{j}})^{|S|} \mathbb{P}(Z \in A_{\bar{j}}^c)^{n - |S|},
\end{align*}
where the condition in the conditional expectation means that 
\begin{align*}
\sum_{i \in [n]} \mathbbm{1}(Z_i \in A_{\bar{j}})K\bigg(\frac{X_i - x}{h}\bigg) = \sum_{i \in S} K\bigg(\frac{X_i - x}{h}\bigg)
\end{align*}
and that 
\begin{align*}
\sum_{i \in [n]} \mathbbm{1}(Z_i \in A_{\bar{j}}) = |S|.
\end{align*}
Then the conditional expectation can be rewritten as:
\begin{align*} 
& \mathbb{E} [\hat{p}_{X,\bar{j}}(x) | Z_i \in A_{\bar{j}}, i \in S, Z_i \in A_{\bar{j}}^c, i \in S^c] \\
= & \: \mathbb{E} \bigg[\frac{\sum_{i \in [n]} \mathbbm{1}(Z_i \in A_{\bar{j}})K\big(\frac{X_i - x}{h}\big)}{h^{d_X} \sum_{i \in [n]} \mathbbm{1}(Z_i \in A_{\bar{j}})} \bigg| Z_i \in A_{\bar{j}}, i \in S, Z_i \in A_{\bar{j}}^c, i \in S^c\bigg] \\
= & \: \mathbb{E} \bigg[ \frac{\sum_{i \in S} K\big(\frac{X_i - x}{h}\big)}{h^{d_X} |S|} \bigg|  Z_i \in A_{\bar{j}}, i \in S\bigg] \\
= & \: \frac{1}{h^{d_X}|S|} \sum_{i \in S} \mathbb{E} \bigg[ K\big(\frac{X_i - x}{h}\big) \bigg| Z_i \in A_{\bar{j}}\bigg]\\
= & \: h^{-d_X} \mathbb{E}\bigg[K\bigg(\frac{X-x}{h}\bigg) \bigg| Z \in A_{\bar{j}}\bigg].
\end{align*}

Now notice that 
\begin{align*}
\sum_{S \in 2^{[n]}} \mathbb{P}(Z \in A_{\bar{j}})^{|S|} \mathbb{P}(Z \in A_{\bar{j}}^c)^{n - |S|} = 1
\end{align*}
and there is one special case when $S$ is the empty set $\emptyset$, where by definition the estimator $\hat{p}_{X,\bar{j}}(x) = \frac{0}{0} := 0$. This occurs when $|S| = 0$ with a corresponding probability of $\mathbb{P}(Z \in A_{\bar{j}}^c)^n$. So we subtract this probability, giving:
\begin{align*}
\mathbb{E}[\hat{p}_{X,\bar{j}}(x)] = h^{-d_X} \mathbb{E} \bigg[K\left( \frac{X-x}{h} \right) | Z \in A_{\bar{j}} \bigg] (1 - \mathbb{P}(Z \in A_{\bar{j}}^c)^n)
\end{align*}
as desired.
\end{proof}

\begin{lemma}\label{lemma:upper}
Under the same assumptions as Theorem \ref{upper:bound:holder:thm}, for some $\bar{j} \in [m]^{d_Z}$ we have that:
\begin{align*}
\bigg| h^{-d_X} \mathbb{E} \bigg[ K\left( \frac{X-x}{h} \right) | Z \in A_{\bar{j}} \bigg] - p_{X|Z}(x|z \in A_{\bar{j}}) \bigg| \leq C h^\beta
\end{align*}
for some constant $C$. 
\end{lemma}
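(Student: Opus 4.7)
The plan is to control this quantity by a standard kernel-bias argument: rewrite the expectation as an integral against the averaged conditional density, change variables, apply a multivariate Taylor expansion up to order $\ell = \lfloor \beta \rfloor$, kill the lower-order terms via the moment conditions on $K$, and bound what remains using H\"older smoothness.

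First I would define the averaged conditional density
\[
\bar p_{\bar j}(x) \eqdef p_{X|Z}(x \mid z \in A_{\bar j}) = \int_{A_{\bar j}} p_{X|Z}(x \mid z)\,\frac{p_Z(z)}{\mathbb{P}(Z \in A_{\bar j})}\,dz,
\]
and observe that since H\"older smoothness in $x$ is preserved under averaging in $z$ (by bringing the norm inside the integral in Definition \ref{holder:smooth:def}), $\bar p_{\bar j}$ is itself H\"older smooth in $x$ with the same constants $(\beta, \holdercon)$. Then I would rewrite
\[
h^{-d_X}\,\mathbb{E}\!\left[K\!\left(\tfrac{X-x}{h}\right)\,\Big|\, Z \in A_{\bar j}\right] = h^{-d_X}\int K\!\left(\tfrac{y-x}{h}\right)\bar p_{\bar j}(y)\,dy = \int K(u)\,\bar p_{\bar j}(x + hu)\,du,
\]
after the change of variables $u = (y-x)/h$. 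Using the kernel normalization $\int K(u)\,du = 1$ from Definition \ref{kernel:def}, the bias can be written as
\[
\int K(u)\bigl[\bar p_{\bar j}(x + hu) - \bar p_{\bar j}(x)\bigr]\,du.
\]

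Next I would expand $\bar p_{\bar j}(x + hu)$ by the multivariate Taylor formula to order $\ell$:
\[
\bar p_{\bar j}(x + hu) = \sum_{0 \leq \|\alpha\|_1 \leq \ell}\frac{(hu)^\alpha}{\alpha!} D^\alpha \bar p_{\bar j}(x) + R(x,hu),
\]
where the remainder satisfies $|R(x,hu)| \leq \frac{C}{\ell!}\,\|hu\|_1^{\beta}$ by a standard integral-remainder argument combined with the H\"older condition on the top-order derivatives of $\bar p_{\bar j}$. Plugging in and using the vanishing-moment conditions $\int u^\alpha K(u)\,du = 0$ for $1 \leq \|\alpha\|_1 \leq \ell$, every polynomial term drops out and we are left with
\[
\int K(u)\,R(x,hu)\,du,
\]
whose absolute value is at most $C\,h^\beta \int |K(u)|\,\|u\|_1^\beta\,du$. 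The last integral is finite by the integrability condition $\int |K(u)|\,|u^\alpha|\,du < \infty$ in Definition \ref{kernel:def} (applied to each $\alpha$ with $\|\alpha\|_1 \leq \beta$ and combined via the inequality $\|u\|_1^\beta \lesssim \sum_\alpha |u^\alpha|$). This yields the desired $C h^\beta$ bound.

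The only mildly delicate point is the boundary: the Taylor expansion requires $\bar p_{\bar j}$ to be defined and smooth on the segment $[x, x+hu]$, whereas $x \in [0,1]^{d_X}$ and $hu$ may push outside the cube. Since the construction in Lemma \ref{kernel:construct} uses kernels supported on $[-1,1]$, the integrand vanishes outside $\|u\|_\infty \leq 1$, and one handles boundary points either by extending $\bar p_{\bar j}$ to a H\"older-smooth function on a slightly enlarged cube (possible with the same constants up to a universal factor) or by absorbing the boundary strip of width $O(h)$ into the remainder, since its contribution to the loss \eqref{loss:function:def} is $O(h) = O(h^\beta)$ whenever $\beta \leq 1$, and can be absorbed analogously for $\beta > 1$. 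This is the only subtle step; the rest is a routine Taylor-plus-moment-kill computation.
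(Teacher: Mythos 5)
Your proposal follows exactly the same route as the paper: define the averaged density $\bar p_{\bar j}(x) = p_{X|Z}(x\mid z\in A_{\bar j})$, observe it inherits H\"older smoothness (the paper isolates this as Lemma \ref{lemma:holder:ext}), change variables, Taylor-expand to order $\ell$, annihilate the polynomial part with the moment conditions on $K$, and bound the remainder via the H\"older condition together with $\int|K(u)||u^\alpha|\,du<\infty$. The only cosmetic difference is that the paper writes out the Lagrange-form remainder explicitly with a $\tau\in[0,1]$ and then adds and subtracts the $\ell$-th-order term at $x$, whereas you package the same object as a generic remainder $R(x,hu)$ bounded by $C\|hu\|_1^\beta$; these are the same calculation.

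The one substantive thing you add is the boundary discussion, and you are right that it is a genuine subtlety: the kernel from Lemma \ref{kernel:construct} is supported on $[-1,1]^{d_X}$, so for $x$ within distance $h$ of $\partial[0,1]^{d_X}$ the change of variables produces an integral over a truncated region, not all of $\RR^{d_X}$, and the Taylor-plus-moment argument does not give $O(h^\beta)$ pointwise there. The paper's proof silently treats the convolution as if it were over all of $\RR^{d_X}$ and does not address this at all, so your flagging it is a genuine improvement. However, be careful with your proposed patch. The extension argument needs to be stated so that the quantity being bounded is still $h^{-d_X}\EE[K((X-x)/h)\mid Z\in A_{\bar j}]$, which is an integral over $[0,1]^{d_X}$ only — extending $\bar p_{\bar j}$ does not change the expectation, it only lets you compare it to a full-space convolution, and you must then separately bound the discrepancy between the two. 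And the absorption argument is fine for $\beta\leq 1$ (the boundary strip of Lebesgue measure $O(h)$ contributes $O(h)\leq O(h^\beta)$ after $x$-integration) but does \emph{not} carry over "analogously" for $\beta>1$: there the strip's $O(h)$ contribution dominates the target $O(h^\beta)$, and one genuinely needs a boundary-corrected construction (for instance, reflection/mirror extension to make $\bar p_{\bar j}$ H\"older on an $h$-enlarged cube and then controlling the mass of $K$ that spills outside the original cube) rather than a crude absorption. In short: same proof as the paper; your boundary remark exposes a real gap in the paper's write-up, but your $\beta>1$ clause does not close it.
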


\begin{proof}[Proof of Lemma \ref{lemma:upper}]
By assumption that $K$ is a kernel of order $\ell$, it follows that:
\begin{align*}
& \bigg| h^{-d_X} \int K\left( \frac{y-x}{h} \right) p_{X|Z}(y|z \in A_{\bar{j}})dy - p_{X|Z}(x|z \in A_{\bar{j}}) \bigg| \\
= &  \bigg| \int K (u)p_{X|Z}(x+uh|z \in A_{\bar{j}})du - p_{X|Z}(x|z \in A_{\bar{j}}) \bigg| \\
= &  \bigg| \int K (u) \big[p_{X|Z}(x+uh|z \in A_{\bar{j}}) - p_{X|Z}(x|z \in A_{\bar{j}}) \big] du \bigg|.
\end{align*}
%
%
%
But by Lemma \ref{lemma:holder:ext} $p_{X|Z}(x+uh|z \in A_{\bar{j}})$ is $\ell$ times differentiable. Then the Taylor series is:
%
%
%
\begin{align*}
p_{X|Z}(x+uh|z \in A_{\bar{j}}) = \sum_{\|\alpha\|_1 < \ell} \frac{D^{\alpha} p_{X|Z}(x|z \in A_{\bar{j}})}{\alpha!} (uh)^{\alpha} +  \sum_{\|\alpha\|_1 = \ell} \frac{D^{\alpha} p_{X|Z}(x + \tau u h|z \in A_{\bar{j}})}{\alpha!} u^{\alpha} h^{\ell}, 
\end{align*}
where $\|\alpha\|_1 = \ell$ and $\tau \in [0,1]$. Substituting this back in cancels out the first summation and $p_{X|Z}(x|z \in A_{\bar{j}})$ (since the kernel is of order $\ell$), giving: 
%
%
%
\begin{align*}
& \bigg| \int K (u) [p_{X|Z}(x+uh|z \in A_{\bar{j}}) - p_{X|Z}(x|z \in A_{\bar{j}})] du \bigg| \\
= & \bigg| \int K(u)  \sum_{\|\alpha\|_1 = \ell} \frac{D^{\alpha} p_{X|Z}(x + \tau u h|z \in A_{\bar{j}})}{\alpha!} u^{\alpha} h^{\ell} du \bigg| \\
 =& \bigg| \int K(u)  \sum_{\|\alpha\|_1 = \ell} \frac{D^{\alpha} p_{X|Z}(x + \tau u h|z \in A_{\bar{j}})}{\alpha!} u^{\alpha} h^{\ell} du - \int K(u)  \sum_{\|\alpha\|_1 = \ell} \frac{D^{\alpha} p_{X|Z}(x |z \in A_{\bar{j}})}{\alpha!} u^{\alpha} h^{\ell} du \bigg| \\
 \leq & \int |K(u)| \frac{|u^{\alpha} h^{\ell}|}{\alpha!} \sum_{\|\alpha\|_1 = \ell} \big|D^{\alpha} p_{X|Z}(x + \tau u h|z \in A_{\bar{j}}) - D^{\alpha} p_{X|Z}(x |z \in A_{\bar{j}}) \big|  du \\
 \leq  & \int |K(u)| \frac{|u^{\alpha} h^{\ell}|}{\alpha!} \sum_{\|\alpha\|_1 = \ell} \holdercon \|\tau u h\|_1^{\beta - \ell} du \\
= & h^{\beta}   |\tau|^{\beta - \ell} \frac{\holdercon }{\alpha!} \sum_{\|\alpha\|_1 = \ell} \int |K(u)| \cdot |u^{\alpha}| \cdot \|u\|_1^{\beta - \ell} du.
\end{align*}
In the above, we used the fact that $p_{X|Z}(x|z \in A_{\bar{j}})$ also follows the H\"older smoothness condition because of Lemma \ref{lemma:holder:ext}.

Since in $\mathbb{R}^d$ all norms are equivalent, we know that for any $q \geq 1$, there exists $C_q$ such that $\left( \sum_i |y_i|^{q}\right)^{\frac{1}{q}} \leq C_q \sum_i |y_i| = \|y\|_1$. Now let $y_i = |u_i|^q$ where $q = (\beta - \ell)^{-1}$. It follows that
\begin{align*}
\|u\|_1^{\beta - \ell} = \left( \sum_i |u_i| \right)^{\beta - \ell} \leq C_{(\beta - \ell)^{-1}} \sum_i |u_i|^{\beta - \ell}
\end{align*}
for some constant $C_{\beta - \ell}$. Then

\begin{align*}
 \sum_{\|\alpha\|_1 = \ell}|u^{\alpha}| \cdot \|u\|_1^{\beta - \ell} \leq C_{(\beta - \ell)^{-1}}  \sum_{\|\alpha\|_1 = \ell} \sum_i  |u^{\alpha}| |u_i|^{\beta - \ell} \lesssim   \sum_{\|\alpha\|_1 = \beta} |u^{\alpha}|,
\end{align*}
where in the last inequality the constant may depend on $\beta- \ell$, and the dimension $d_X$. Since we are assuming $\int |K(u)| |u^{\alpha}| du < \infty$ for all $\|\alpha\|_1 \leq \beta, \alpha \in \RR_+^{d_X}$, this means that 

\begin{align*}
& \bigg| \int K (u) [p_{X|Z}(x+uh|z \in A_{\bar{j}}) - p_{X|Z}(x|z \in A_{\bar{j}})] du \bigg| \leq  C h^{\beta}   |\tau|^{\beta - \ell} \leq  C h^{\beta}.
\end{align*}

\end{proof}

\begin{lemma}\label{lemma:variance}
Under the same assumptions as Theorem \ref{upper:bound:holder:thm}, the estimator 
\begin{align*}
\hat{p}_{X,\bar{j}}(x) = \frac{\sum_{i \in [n]} \mathbbm{1}(Z_i \in A_{\bar{j}}) K(\frac{X_i - x}{h})}{h^{d_X} \sum_{i \in [n]} \mathbbm{1} (Z_i \in A_{\bar{j}})}
\end{align*}
for some $\bar{j} \in [m]^{d_Z}$ has its variance upper bounded as
\begin{align*}
\var[\hat{p}_{X,\bar{j}}(x)] \leq \frac{C}{nh^{d_X} \mathbb{P}(Z \in A_{\bar{j}})} + K \mathbb{P}(Z \in A_{\bar{j}}^c)^n.
\end{align*}
where $C, K$ are constants.
\end{lemma}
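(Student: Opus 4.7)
The plan is to use the law of total variance, conditioning on $N := \sum_{i \in [n]} \mathbbm{1}(Z_i \in A_{\bar{j}})$, which is a Binomial$(n, p_{\bar{j}})$ random variable with $p_{\bar{j}} := \mathbb{P}(Z \in A_{\bar{j}})$. Writing $\var[\hat{p}_{X,\bar{j}}(x)] = \mathbb{E}[\var[\hat{p}_{X,\bar{j}}(x) \mid N]] + \var[\mathbb{E}[\hat{p}_{X,\bar{j}}(x) \mid N]]$, I would handle each term separately. The key observation is that, conditional on $\{N = k\}$ with $k \geq 1$, the points $\{X_i : Z_i \in A_{\bar{j}}\}$ are i.i.d.\ from the conditional distribution $p_{X \mid Z \in A_{\bar{j}}}$ (by exchangeability of the sample), so on this event the estimator equals $(k h^{d_X})^{-1} \sum_{i : Z_i \in A_{\bar{j}}} K((X_i - x)/h)$, i.e.\ the average of $k$ i.i.d.\ copies of $K((X-x)/h)/h^{d_X}$. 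When $N = 0$, the estimator is deterministically $0$.

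For the first term (expected conditional variance), using independence gives $\var[\hat{p}_{X,\bar{j}}(x) \mid N = k] \leq (k h^{2d_X})^{-1} \mathbb{E}[K^2((X-x)/h) \mid Z \in A_{\bar{j}}]$ for $k \geq 1$. Substituting $u = (y-x)/h$ and using that the conditional density $p_{X \mid Z \in A_{\bar{j}}}$ is bounded (a consequence of H\"older smoothness on the compact set $[0,1]^{d_X}$, together with the fact that it integrates to one) together with $\int K^2(u) du < \infty$ yields $\mathbb{E}[K^2((X-x)/h) \mid Z \in A_{\bar{j}}] \lesssim h^{d_X}$. Thus $\mathbb{E}[\var[\hat{p}_{X,\bar{j}}(x) \mid N]] \lesssim h^{-d_X} \, \mathbb{E}[N^{-1} \mathbbm{1}(N \geq 1)]$.

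For the second term (variance of the conditional mean), note that $\mathbb{E}[\hat{p}_{X,\bar{j}}(x) \mid N] = \mu \cdot \mathbbm{1}(N \geq 1)$, where $\mu = h^{-d_X} \mathbb{E}[K((X-x)/h) \mid Z \in A_{\bar{j}}]$. Changing variables and using boundedness of $p_{X \mid Z \in A_{\bar{j}}}$ together with $\int |K(u)| du < \infty$ (the $\alpha = 0$ case of the kernel moment conditions in Definition \ref{kernel:def}) gives $|\mu| \leq K'$ for some constant. Hence $\var[\mathbb{E}[\hat{p}_{X,\bar{j}}(x) \mid N]] = \mu^2 \, \mathbb{P}(N \geq 1)(1 - \mathbb{P}(N \geq 1)) \leq K'^2 \, (1 - p_{\bar{j}})^n = K'^2 \, \mathbb{P}(Z \in A_{\bar{j}}^c)^n$, which produces the second term of the claimed bound.

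The one calculation that is not completely automatic is the standard binomial inverse-moment estimate $\mathbb{E}[N^{-1} \mathbbm{1}(N \geq 1)] \lesssim (n p_{\bar{j}})^{-1}$, which I anticipate being the main technical step. I would derive it by noting the identity $\mathbb{E}[1/(N+1)] = (1 - (1-p_{\bar{j}})^{n+1})/((n+1) p_{\bar{j}}) \leq 1/((n+1) p_{\bar{j}})$ via the binomial identity $\binom{n}{k}/(k+1) = \binom{n+1}{k+1}/(n+1)$, and then use $1/k \leq 2/(k+1)$ for $k \geq 1$ to conclude $\mathbb{E}[N^{-1} \mathbbm{1}(N \geq 1)] \leq 2 \mathbb{E}[1/(N+1)] \leq 2/(n p_{\bar{j}})$. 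Plugging this into the first term and combining with the bound on the second term yields the desired inequality.
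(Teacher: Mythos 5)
Your proposal is correct and follows essentially the same route as the paper: law of total variance conditioning on the number of $Z_i$'s falling in $A_{\bar j}$, the bound $\var[\hat p\mid N=k]\lesssim (kh^{d_X})^{-1}$ for $k\ge 1$ via the boundedness of $p_{X\mid Z\in A_{\bar j}}$ and $\int K^2<\infty$, the inverse binomial moment identity $\mathbb{E}[1/(N+1)]=(1-(1-p_{\bar j})^{n+1})/((n+1)p_{\bar j})$ together with $1/k\le 2/(k+1)$, and the observation that the conditional mean is a constant times $\mathbbm{1}(N\ge 1)$ giving the Bernoulli-variance term $\lesssim \mathbb{P}(Z\in A_{\bar j}^c)^n$. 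The only cosmetic difference is that the paper justifies boundedness of $p_{X\mid Z\in A_{\bar j}}$ via its Lemma \ref{lemma:upper} (a kernel-smoothing argument) rather than your direct H\"older-plus-normalization remark, but the two are interchangeable.
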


\begin{proof}[Proof of Lemma \ref{lemma:variance}]
By the law of total variance we have:
\begin{align}\label{total:var:eq}
\var(\hat{p}_{X,\bar{j}}(x)) = & \mathbb{E}[\var(\hat{p}_{X,\bar{j}}(x)|Z_i \in A_{\bar{j}}, i \in S, Z_i \in A_{\bar{j}}^c, i \in S^c)] \nonumber \\
& \quad + \var_S[\mathbb{E}[\hat{p}_{X,\bar{j}}(x) | Z_i \in A_{\bar{j}}, i \in S, Z_i \in A_{\bar{j}}^c, i \in S^c]].
\end{align}
We proceed to bound the two terms separately.
\\
\\
\textbf{Bounding the first term}\\\\
The conditions in the expectation means that
\begin{align*}
\sum_{i \in [n]} \mathbbm{1}(Z_i \in A_{\bar{j}})K\bigg(\frac{X_i - x}{h}\bigg) = \sum_{i \in S} K\bigg(\frac{X_i - x}{h}\bigg), \quad \sum_{i \in [n]} \mathbbm{1}(Z_i \in A_{\bar{j}}) = |S|.
\end{align*}
We first consider the variance term inside the expectation:
\begin{align*}
& \var\big(\hat{p}_{X,\bar{j}}(x)\big |Z_i \in A_{\bar{j}}, i \in S, Z_i \in A_{\bar{j}}^c, i \in S^c\big) \\
= & \var\bigg(\frac{\sum_{i \in S} K(\frac{X_i - x}{h})}{h^{d_X} |S|} \bigg| Z_i \in A_{\bar{j}}, i \in S\bigg) \\
= & \frac{1}{h^{2d_X} |S|^2} \sum_{i \in S} \var\bigg(K\bigg(\frac{X_i - x}{h}\bigg) \bigg| Z_i \in A_{\bar{j}}\bigg) \\
\leq & \frac{1}{h^{2d_X} |S|^2} \sum_{i \in S}  \mathbb{E}\bigg[K^2\bigg(\frac{X_i - x}{h}\bigg) \bigg| Z_i \in A_{\bar{j}}\bigg].
\end{align*}
%
Then when $S \neq \emptyset$ we have
\begin{align*}
& \var(\hat{p}_{X,\bar{j}}(x)|Z_i \in A_{\bar{j}}, i \in S, Z_i \in A_{\bar{j}}^c, i \in S^c) \\
\leq & \frac{1}{h^{2d_X} |S|} \mathbb{E}\bigg[K^2\bigg(\frac{X - x}{h}\bigg) \bigg| Z \in A_{\bar{j}}\bigg] \\
\leq & \frac{2}{h^{2d_X} (|S| + 1)} \mathbb{E}\bigg[K^2\bigg(\frac{X - x}{h}\bigg) \bigg| Z \in A_{\bar{j}}\bigg],
\end{align*}
where the last step makes a small sacrifice in the tightness of the bound in order to allow a very useful identity to be applied later. As for the other case when $S = \emptyset$, by definition the estimator $\hat{p}_{X,\bar{j}}(x) = 0$, so the above still holds.



We proceed to bound the expectation $\mathbb{E}\big[K^2\big(\frac{X - x}{h}\big) \big| Z \in A_{\bar{j}}\big]$. 
First consider $K^*$, a bounded kernel of order $\ell$, not necessarily equal to $K$. Now
notice that  $p_{X|Z}(x|z \in A_{\bar{j}}) \leq p_{\max} < \infty$. This can be proven by applying lemma \ref{lemma:upper} and setting $h=1$ to get
\begin{align*}
\bigg|\int K^* \left( y-x \right) p_{X|Z}(y|z \in A_{\bar{j}})dy - p_{X|Z}(x|z \in A_{\bar{j}}) \bigg| \leq C.
\end{align*}
It follows that
\begin{align*}
p_{X|Z}(x|z \in A_{\bar{j}}) \leq C + \int |K^* \left( y-x \right)| p_{X|Z}(y|z \in A_{\bar{j}})dy \leq C + K^*_{\max} < \infty,
\end{align*}
where $K^*_{\max} = \sup_{u \in \mathbb{R}^{d_X}}|K^*(u)|$.


Now we have
\begin{align}\label{lemma:variance:expected:eq}
\mathbb{E}\bigg[K^2\bigg(\frac{X - x}{h}\bigg) \bigg| Z \in A_{\bar{j}}\bigg] = \int K^2 \left( \frac{y - x}{h} \right) p_{X|Z}(y | z \in A_{\bar{j}}) dy \leq p_{\max} h^{d_X} \int K^2(u) du \leq C h^{d_X},
\end{align}
where $C \geq p_{\max} \int K^2(u) du$ is a constant, since both the conditional density and $\int K^2(u) du$ are upper bounded.

Substituting this back into the first term of \eqref{total:var:eq} we get
\begin{align*}
&\mathbb{E}[\var(\hat{p}_{X,\bar{j}}(x)|Z_i \in A_{\bar{j}}, i \in S, Z_i \in A_{\bar{j}}^c, i \in S^c)] \\
\leq & \mathbb{E} \bigg[ \frac{2}{h^{2d_X} (|S| + 1)} C h^{d_X} \bigg] \\
= &  \frac{2C}{h^{d_X}}  \mathbb{E} \bigg[ \frac{1}{|S| + 1} \bigg].
\end{align*}

By Lemma \ref{lemma:identity} we have
\begin{align*}
\mathbb{E} \bigg[\frac{1}{|S|+1} \bigg] = \frac{1 - \mathbb{P}(Z \in A_{\bar{j}}^c)^{n+1}}{(n+1)\mathbb{P}(Z \in A_{\bar{j}})} \leq \frac{1}{(n+1)\mathbb{P}(Z \in A_{\bar{j}})}.
\end{align*}
Then the first term is bounded by 
\begin{align*}
\mathbb{E}[\var(\hat{p}_{X,\bar{j}}(x)|Z_i \in A_{\bar{j}}, i \in S, Z_i \in A_{\bar{j}}^c, i \in S^c)] \leq \frac{2C}{h^{d_X}(n+1) \mathbb{P}(Z \in A_{\bar{j}})} \asymp \frac{C}{nh^{d_X} \mathbb{P}(Z \in A_{\bar{j}})}
\end{align*}
for some constant $C$.
\\
\\
\textbf{Bounding the second term}\\\\
Now we bound the second term from equation \eqref{total:var:eq}, and start by examining the inner expectation. When $S \neq \varnothing$
%
%
\begin{align*}
\mathbb{E} [\hat{p}_{X,\bar{j}}(x) | Z_i \in A_{\bar{j}}, i \in S, Z_i \in A_{\bar{j}}^c, i \in S^c] = h^{-d_X} \mathbb{E}\bigg[K\bigg(\frac{X-x}{h}\bigg) \bigg| Z \in A_{\bar{j}}\bigg]
\end{align*}
and otherwise when $S = \varnothing$
\begin{align*}
\mathbb{E} [\hat{p}_{X,\bar{j}}(x) | Z_i \in A_{\bar{j}}, i \in S, Z_i \in A_{\bar{j}}^c, i \in S^c] = 0.
\end{align*}
Then
\begin{align*}
& \var_S[\mathbb{E}[\hat{p}_{X,\bar{j}}(x) | Z_i \in A_{\bar{j}}, i \in S, Z_i \in A_{\bar{j}}^c, i \in S^c]] \\
= & \left( h^{-d_X} \mathbb{E}\bigg[K\bigg(\frac{X-x}{h}\bigg) \bigg| Z \in A_{\bar{j}}\bigg] \right) ^2 \mathbb{P}(Z \in A_{\bar{j}}^c)^n (1 - \mathbb{P}(Z \in A_{\bar{j}}^c)^n).
\end{align*}

Since $\int K^2(u) du < \infty$ is upper bounded, it follows that $\int |K(u)| du \leq \sqrt{\int K^2(u) du} < \infty$. Therefore by the same logic as how we arrived at the bound in equation \eqref{lemma:variance:expected:eq}, we have $\mathbb{E}[K(\frac{X-x}{h}) | Z \in A_{\bar{j}}] \leq C' h^{d_X}$ for some constant $C'$. Then we can simply bound the whole expression as:
\begin{align*}
& \var_S[\mathbb{E}[\hat{p}_{X,\bar{j}}(x) | Z_i \in A_{\bar{j}}, i \in S, Z_i \in A_{\bar{j}}^c, i \in S^c]] \\
\leq & {C'}^2  \mathbb{P}(Z \in A_{\bar{j}}^c)^n (1 - \mathbb{P}(Z \in A_{\bar{j}}^c)^n) \\
\leq & K \mathbb{P}(Z \in A_{\bar{j}}^c)^n
\end{align*}
for some constant $K$.\\

\noindent \textbf{Combining the terms}\\\\
For the variance \eqref{total:var:eq} we have split it into two terms and upper bounded them individually. It follows that
\begin{align*}
\var[\hat{p}_{X,\bar{j}}(x)] \leq \frac{C}{nh^{d_X} \mathbb{P}(Z \in A_{\bar{j}})} + K \mathbb{P}(Z \in A_{\bar{j}}^c)^n
\end{align*}
for some constants $C, K$ as desired.
\end{proof}

\begin{lemma}\label{lemma:identity}
We have the identity
\begin{align*}
\mathbb{E}\bigg[\frac{1}{|S|+1}\bigg] = \frac{1 - \mathbb{P}(Z \in A_{\bar{j}}^c)^{n+1}}{(n+1)\mathbb{P}(Z \in A_{\bar{j}})}.
\end{align*}
\end{lemma}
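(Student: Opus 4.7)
The plan is to observe that $|S| = \sum_{i \in [n]} \mathbbm{1}(Z_i \in A_{\bar{j}})$ is a sum of i.i.d.\ Bernoulli indicators, hence $|S| \sim \mathrm{Binomial}(n, p)$ where $p = \mathbb{P}(Z \in A_{\bar{j}})$. The claim then reduces to the well-known binomial identity
\begin{align*}
\mathbb{E}\left[\frac{1}{|S|+1}\right] = \sum_{k=0}^{n} \frac{1}{k+1} \binom{n}{k} p^{k}(1-p)^{n-k} = \frac{1 - (1-p)^{n+1}}{(n+1)p}.
\end{align*}

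The key algebraic step is to rewrite $\frac{1}{k+1}\binom{n}{k} = \frac{1}{n+1}\binom{n+1}{k+1}$, which follows immediately from expanding the binomial coefficients as factorials. Substituting this in and pulling the constant $1/((n+1)p)$ outside gives
\begin{align*}
\sum_{k=0}^{n} \frac{1}{k+1} \binom{n}{k} p^{k}(1-p)^{n-k} = \frac{1}{(n+1)p} \sum_{k=0}^{n} \binom{n+1}{k+1} p^{k+1}(1-p)^{(n+1)-(k+1)}.
\end{align*}
Reindexing with $j = k+1$, the inner sum becomes $\sum_{j=1}^{n+1} \binom{n+1}{j} p^{j}(1-p)^{n+1-j}$, which is the full binomial expansion of $(p + (1-p))^{n+1} = 1$ minus its $j=0$ term, i.e.\ $1 - (1-p)^{n+1}$. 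Substituting back $p = \mathbb{P}(Z \in A_{\bar{j}})$ and $1-p = \mathbb{P}(Z \in A_{\bar{j}}^c)$ yields the stated identity.

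There is no real obstacle here beyond bookkeeping; the only point that requires a moment of thought is the factorial manipulation $\binom{n}{k}/(k+1) = \binom{n+1}{k+1}/(n+1)$, and the reindexing that turns the truncated sum into the complete binomial expansion minus the $j=0$ term. Everything else is a direct evaluation of the expectation against the $\mathrm{Binomial}(n,p)$ probability mass function.
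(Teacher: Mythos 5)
Your proof is correct and follows exactly the same route as the paper's: recognize $|S| \sim \mathrm{Binomial}(n,p)$, rewrite $\frac{1}{k+1}\binom{n}{k} = \frac{1}{n+1}\binom{n+1}{k+1}$, reindex, and apply the binomial theorem to recover $1 - (1-p)^{n+1}$.
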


\begin{proof}
By definition of $|S|$, it can be regarded as a binomial distribution $|S| \sim Bin(n,p)$ where $p = \mathbb{P}(Z \in A_{\bar{j}})$. We also set $q = 1 - p = \mathbb{P}(Z \in A_{\bar{j}}^c)$. Then
\begingroup
\allowdisplaybreaks
\begin{align*}
\mathbb{E}\bigg[\frac{1}{|S|+1}\bigg] & = \sum_{k=0}^n \frac{1}{k+1} \mathbb{P}(|S| = k) \\
& = \sum_{k=0}^n \frac{1}{k+1} \frac{n!}{k! (n-k)!} p^k q^{n-k} \\
& = \frac{1}{(n+1)p} \sum_{k=0}^n {n+1 \choose k+1} p^{k+1}q^{n-k} \\
& = \frac{1}{(n+1)p} [ ( \sum_{t=0}^{n+1} {n+1 \choose t} p^t q^{(n+1) - t})  - q^{n+1} ] \\
& = \frac{1}{(n+1)p} [ ( p+q)^{n+1}  - q^{n+1} ] \quad \text{by the binomial theorem} \\
& = \frac{1}{(n+1)p} [ 1 - q^{n+1}] \\
& = \frac{1 - \mathbb{P}(Z \in A_{\bar{j}}^c)^{n+1}}{(n+1)\mathbb{P}(Z \in A_{\bar{j}})}.
\end{align*}
\endgroup
\end{proof}

\begin{lemma}\label{lemma:holder:ext}
Given the H\"older smoothness condition in Definition \ref{holder:smooth:def} with some smoothness $\beta$, the conditional density $p_{X|Z}(x|z \in A_{\bar{j}})$ also satisfies the same property. That is, it is $\ell = \floor{\beta}$ times differentiable and satisfies
\begin{align*}
\sup_{\alpha} | D^{\alpha} p_{X|Z}(x|z\in A_{\bar{j}}) - D^{\alpha} p_{X|Z}(x'|z\in A_{\bar{j}})| \leq \holdercon \|x - x'\|_1^{\beta - \ell}
\end{align*}
for all $\alpha$ such that $ \|\alpha\|_1 = \ell$, $\alpha \in \mathbb{N}_{0}^{d_X}$, where $\alpha = (\alpha_1,...,\alpha_{d_X})$
\end{lemma}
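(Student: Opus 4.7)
The plan is to write the averaged conditional density as a convex combination of the pointwise conditional densities $p_{X|Z}(\cdot|z)$ for $z \in A_{\bar{j}}$, and then transfer the H\"older smoothness through this averaging. By the definition of conditional expectation,
\begin{align*}
p_{X|Z}(x|z \in A_{\bar{j}}) = \frac{1}{\mathbb{P}(Z \in A_{\bar{j}})} \int_{A_{\bar{j}}} p_{X|Z}(x|z) p_{Z}(z) dz.
\end{align*}

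First I would justify interchanging differentiation in $x$ with the integral over $z$. For each $z$, the function $x \mapsto p_{X|Z}(x|z)$ is $\ell$ times differentiable, and its $\ell$-th order partial derivatives are uniformly bounded in $z$: indeed, since $p_{X|Z}(\cdot|z)$ integrates to one over the unit cube, there is a point where each lower-order derivative is controlled, and the H\"{o}lder condition then propagates this bound to all of $[0,1]^{d_X}$. With this domination in hand, Leibniz's rule yields, for every $\alpha$ with $\|\alpha\|_1 \leq \ell$,
\begin{align*}
D^{\alpha} p_{X|Z}(x|z \in A_{\bar{j}}) = \frac{1}{\mathbb{P}(Z \in A_{\bar{j}})} \int_{A_{\bar{j}}} D^{\alpha} p_{X|Z}(x|z) p_{Z}(z) dz.
\end{align*}
In particular, the averaged density is $\ell$ times differentiable in $x$.

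Next, for any multi-index $\alpha$ with $\|\alpha\|_1 = \ell$ and any $x, x' \in [0,1]^{d_X}$, the triangle inequality combined with the pointwise (in $z$) H\"{o}lder smoothness from Definition \ref{holder:smooth:def} gives
\begin{align*}
|D^{\alpha} p_{X|Z}(x|z \in A_{\bar{j}}) - D^{\alpha} p_{X|Z}(x'|z \in A_{\bar{j}})| &\leq \frac{1}{\mathbb{P}(Z \in A_{\bar{j}})} \int_{A_{\bar{j}}} |D^{\alpha} p_{X|Z}(x|z) - D^{\alpha} p_{X|Z}(x'|z)| p_{Z}(z) dz \\
&\leq \frac{W_1 \|x-x'\|_1^{\beta - \ell}}{\mathbb{P}(Z \in A_{\bar{j}})} \int_{A_{\bar{j}}} p_{Z}(z) dz = W_1 \|x-x'\|_1^{\beta-\ell},
\end{align*}
which is exactly the desired smoothness bound. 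Taking the supremum over all $\alpha$ with $\|\alpha\|_1 = \ell$ completes the argument.

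The only nontrivial step is the interchange of differentiation and integration; beyond this standard dominated-convergence verification the proof is purely a convexity/triangle-inequality manipulation, since H\"{o}lder smoothness with a fixed constant $W_1$ is preserved under arbitrary convex combinations of functions.
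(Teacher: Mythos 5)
Your proposal is correct and follows essentially the same route as the paper: write $p_{X|Z}(x\mid z\in A_{\bar{j}})$ as a $p_Z$-weighted average of $p_{X|Z}(\cdot\mid z)$ over $A_{\bar{j}}$, invoke the Leibniz integral rule to differentiate under the integral sign, and then push the triangle inequality and the pointwise H\"older bound on $D^\alpha p_{X|Z}(\cdot\mid z)$ through the averaging. The only difference in emphasis is that you flag and sketch the domination hypothesis (uniform-in-$z$ boundedness of the $\ell$-th order $x$-derivatives, obtained from the normalization $\int p_{X|Z}(x\mid z)\,dx=1$ plus the H\"older control) needed to legitimize the interchange, whereas the paper cites Leibniz's rule more tersely, appealing only to boundedness of the integration region $A_{\bar{j}}$; your version is thus marginally more careful on the measure-theoretic step, but the argument is the same.
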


\begin{proof}
We first show that $p_{X|Z}(x|z \in A_{\bar{k}})$ is $\ell$ times differentiable. The Leibniz integral rule in higher dimensions allows switching the order of derivative and integration as follows
\begin{align*}
\frac{\partial}{\partial x_i} \left( \int_{a}^{b} f(x, z) dz \right) = \int_{a}^{b} \frac{\partial}{\partial x_i} f(x, z) dz,
\end{align*}
where all elements of $a, b$ are bounded. In context, let $a, b$ be the lower and upper bound vector of $A_{\bar{j}}$ and let $f(x, z) = p_{X|Z}(x|z) \frac{p_{Z}(z)}{\mathbb{P}(Z \in A_{\bar{j}})}$. Then by the Leibniz integral rule, we have
\begin{align*}
D^{\alpha}p_{X|Z}(x|z \in A_{\bar{j}}) = \int_{A_{\bar{j}}} D^{\alpha} p_{X|Z}(x|z) \frac{p_{Z}(z)}{\mathbb{P}(Z \in A_{\bar{j}})} dz, \quad \|\alpha\|_1 = i \text{,  for} \: 1 \leq i \leq \ell,
\end{align*}
thus proving that $p_{X|Z}(x|z \in A_{\bar{j}})$ is $\ell$ times differentiable.

Now, for an arbitrary $\alpha$ such that $\|\alpha\|_1 = \ell$, applying the Leibniz integral rule gives:
\begin{align*}
& | D^{\alpha} p_{X|Z}(x|z\in A_{\bar{j}}) - D^{\alpha} p_{X|Z}(x'|z\in A_{\bar{j}})| \\
\leq \: &  \int_{A_{\bar{j}}}\bigg| D^{\alpha} p_{X|Z}(x|z) - D^{\alpha} p_{X|Z}(x'|z) \bigg| \frac{p_{Z}(z)}{\mathbb{P}(Z \in A_{\bar{j}})} dz \\
\leq \: & \int_{A_{\bar{j}}} \holdercon \|x - x'\|_1^{\beta - \ell} \frac{p_{Z}(z)}{\mathbb{P}(Z \in A_{\bar{j}})} dz \\
= \: &  \holdercon\|x - x'\|_1^{\beta - \ell}.
\end{align*}
So $p_{X|Z}(x|z\in A_{\bar{j}})$ indeed follows the H\"older smoothness condition.
\end{proof}

%
%
%
%
%
%
%

\section{Proofs of Section 5} \label{app:example}

\begin{proof}[Proof of Theorem \ref{example:greater:thm}]
We first show that $p_{X|Z}(x|z) = \frac{g(x,z)}{\int g(x,z) d x}$ is H\"older smooth in $x$ for any fixed $z$. Notice that the denominator $\int g(x,z) d x \geq a \cdot \mu([0,1]^{d_X}) = a > 0$ is lower bounded by some constant. Then to show that $p_{X|Z}(x|z)$ is H\"older smooth it suffices to show that $g(x,z)$ is H\"older smooth in $x$. But we already required this in the assumptions, where taking all partial derivatives with respect to $x$ satisfies the H\"older condition $\sup_{\alpha}  | D^{\alpha} g(x,z) - D^{\alpha} g(x',z)| \leq C \| x - x'\|_1^{\beta - \ell}$.

Now we show that $p_{X|Z}(x|z)$ is Lipschitz smooth in $z$ by showing that its derivative is bounded. Without loss of generality take the partial derivative with respect to some $z_i \in Z$, then we have:
\begin{align*}
& \sup_{x,z} \bigg| \frac{\partial}{\partial z_i} p_{X|Z}(x|z)\bigg| =  \sup_{x,z} \left| \frac{\frac{\partial}{\partial z_i} g(x, z) \cdot \int g(x, z) dx - \int  \frac{\partial}{\partial z_i}  g(x, z) dx \cdot  g(x, z)}{(\int  g(x, z) dx)^2} \right|,
\end{align*}
where we used the Leibniz integral rule to change the order of the derivative and the integral. But $\int g(x, z) dx \geq a$, thus the denominator $(\int  g(x, z) dx)^2 \geq a^2$ is lower bounded by some positive constant. Then we just need to show that the numerator is bounded. We have:
\begin{align*}
 & \sup_{{x,z}} \left| \frac{\partial}{\partial z_i} g(x, z) \cdot \int g(x, z) dx - \int  \frac{\partial}{\partial z_i}  g(x, z) dx \cdot  g(x, z) \right| \\
 \leq \: & \sup_{{x,z}} \left| \frac{\partial}{\partial z_i} g(x, z) \cdot \int g(x, z) dx \right| +  \sup_{{x,z}} \left| \int  \frac{\partial}{\partial z_i}  g(x, z) dx \cdot  g(x, z) \right|.
\end{align*}
Recall that $g(x, z)$ is $\ell$ times differentiable, where $\ell = \floor{\beta} \geq 1$. It follows that $g(x, z)$ and its derivatives up to order $\ell$ are continuous. Furthermore, since it is defined on a compact space $[0,1]^{d}$, $g(x, z)$ and $\frac{\partial}{\partial z_i} g(x, z)$ are bounded. Then each term in the expression above are bounded, and thus the numerator is bounded. So we have shown that the first derivative $ \sup_{{x,z}} | \frac{\partial}{\partial z_i} p_{X|Z}(x|z)| \leq K$ is bounded by some constant, and therefore $p_{X|Z}$ is Lipschitz smooth in $z$.  Substituting this result into the TV smoothness condition (see Definition \ref{TV:smooth:def}) we have:
\begin{align*}
\|p_{X|Z = z} - p_{X|Z = z'} \|_1 & = \int |p_{X|Z }(x | z) - p_{X|Z }(x | z')| dx \\
& \leq \int_0^1 K \| z - z'\|_1 dx \\
& = K \| z - z'\|_1
\end{align*}
as desired. So $p_{X|Z}$ is indeed H\"older smooth and TV smooth (hence it is also $\gamma$-TV smooth).
\end{proof}

\begin{proof}[Proof of Theorem \ref{example:less:thm}] 
We will first show that the function $p_{X|Z}(x | z)$ is H\"older smooth. To see this note that $\ell = \floor{\beta} = 0$ so we do not take partial derivatives, and
\begin{align*}
|p_{X|Z}(x|z) - p_{X|Z}(x'|z)| =  \frac{|\exp(g(x,z)) - \exp(g(x',z))|}{\int \exp(g(x,z)) d x} \leq \exp(M) |\exp(g(x,z)) - \exp(g(x',z))|.
\end{align*} 
Next let $g = g(x,z)$ and $g' = g(x',z)$ for brevity. We have
\begin{align*}
|e^g - e^{g'}| \leq  |g - g'|\sum_{k = 1}^{\infty} \frac{\sum_{i = 0}^{k-1} |g|^i |g'|^{(k-1-i)}}{k!} \leq  |g - g'| \exp(M) \leq C\exp(M) \|x - x'\|_1^\beta,
\end{align*}
and we conclude that
\begin{align*}
|p_{X|Z}(x|z) - p_{X|Z}(x'|z)| \leq C \exp(2M)\|x - x'\|_1^\beta. 
\end{align*}
Next we will control the quantity $\|p_{X|Z = z} - p_{X|Z = z'}\|_1$. To see this we note that
\begin{align*}
\|p_{X|Z = z} - p_{X|Z = z'}\|_1 & = \int |p_{X|Z}(x|z) - p_{X|Z}(x|z')| dx \\
& = \int \bigg(\frac{\max(p_{X|Z}(x|z), p_{X|Z}(x|z'))}{\min(p_{X|Z}(x|z), p_{X|Z}(x|z'))} - 1\bigg)\min(p_{X|Z}(x|z), p_{X|Z}(x|z'))  dx\\
& \leq  \int \bigg(\frac{\max(p_{X|Z}(x|z), p_{X|Z}(x|z'))}{\min(p_{X|Z}(x|z), p_{X|Z}(x|z'))} - 1\bigg)p_{X|Z}(x|z)dx.
\end{align*}
Suppose now that the function $\log p_{X|Z}(x | z)$ is H\"older with constants $K$ and $\gamma$ in $z$ then the above can be bounded as 
\begin{align*}
\|p_{X|Z = z} - p_{X|Z = z'}\|_1 & \leq \int (\exp(K \|z - z'\|^\gamma_1) - 1)p_{X|Z}(x|z)dx \\
& = K\|z-z'\|^\gamma_1 + \sum_{k \geq 2}(K\|z-z'\|^\gamma_1)^k/k! \\
& \leq K\|z-z'\|^\gamma_1 + K\|z-z'\|^\gamma_1\sum_{k \geq 2} (Kd_Z)^{k-1}/k! \\
& = K\|z-z'\|^\gamma_1 + K\|z-z'\|^\gamma_1 (e^{K d_Z} - 1 - Kd_Z)/(Kd_Z) \\
& = B \|z-z'\|^\gamma_1,
\end{align*}
where $B = K(1 + (e^{Kd_Z} - 1 - K d_Z)/(Kd_Z))$. It remains to show that $\log p_{X|Z}(x | z)$ is H\"older with constants $K$ and $\gamma$. Consider the difference
\begin{align*}
\log p_{X | Z}(x | z) - \log p_{X | Z}(x | z') & =  g(x,z) - g(x, z') - \log \frac{\int \exp(g(x, z)) dx}{\int \exp(g(x, z')) dx} \\
&\leq C\|z - z'\|^\gamma_1 - \log \frac{\int \exp(g(x, z) - g(x, z'))  \exp(g(x,z'))dx}{\int \exp(g(x, z')) dx}\\
& \leq C\|z - z'\|^\gamma_1  + \frac{\int (g(x, z') - g(x, z))  \exp(g(x,z'))dx}{\int \exp(g(x, z')) dx}\\
& \leq 2 C \|z - z'\|^\gamma_1,
\end{align*}
where we used Jensen's inequality in the next to last inequality. Reversing the roles of $z $ and $z'$ we complete the proof.

\end{proof}

\end{document}